\newcommand{\blue}{}
\definecolor{mygreen}{rgb}{0.1,0.75,0.2}
\providecommand{\bbs}[1]{\left(#1\right)}
 \newtheorem{thm}{Theorem}[section]
 \newtheorem{cor}[thm]{Corollary}
 \newtheorem{lem}[thm]{Lemma}
 \newtheorem{prop}[thm]{Proposition}
 \theoremstyle{definition}
 \newtheorem{defn}{Definition}
 \theoremstyle{remark}
 \newtheorem{rem}{Remark}
 \numberwithin{equation}{section}
\newcommand{\la}{\langle}
\newcommand{\ra}{\rangle}
\newcommand{\pt}{\partial}
\newcommand{\eps}{\varepsilon}
\newcommand{\ud}{\,\mathrm{d}}
\newcommand{\8}{\infty}
\newcommand{\bR}{\mathbb{R}}
\newcommand{\bZ}{\mathbb{Z}}
\newcommand{\bP}{\mathbb{P}}
\newcommand{\bS}{\mathbb{S}}
\newcommand{\vg}{\gamma}
\begin{document}

\title[Selection principle for weak KAM solutions]{A selection principle for weak KAM solutions via Freidlin-Wentzell  large deviation principle of invariant measures}

\author[Y. Gao]{Yuan Gao}
\address{Department of Mathematics, Purdue University, West Lafayette, IN}
\email{gao662@purdue.edu}

\author[J.-G. Liu]{Jian-Guo Liu}
\address{Department of Mathematics and Department of
  Physics, Duke University, Durham, NC}
\email{jliu@math.duke.edu}

 \keywords{Aubry-Mather theory, Maximal Lipschitz continuous viscosity solution, Global energy landscape, Discrete weak KAM problem, Irreversible process, Lax-Oleinik semigroup}

 \subjclass[2010]{37J50, 60F10, 49L25}


\date{\today}

\maketitle

\begin{abstract}
This paper reinterprets Freidlin-Wentzell's variational construction of the rate function in the large deviation principle for invariant measures from the weak KAM perspective. Through a one-dimensional irreversible diffusion process on a torus, we explicitly characterize essential concepts in the weak KAM theory, such as the Peierls barrier and the projected Mather/Aubry/Ma\~n\'e sets.   The weak KAM representation of Freidlin-Wentzell's variational construction of the rate function is discussed based on the global adjustment for the boundary data on the Aubry set and the local trimming from the lifted Peierls barriers. This rate function gives the maximal Lipschitz continuous viscosity solution to the corresponding stationary Hamilton-Jacobi equation (HJE), satisfying Freidlin-Wentzell's variational formula for the boundary data. Choosing meaningful self-consistent boundary data at each local attractor are essential to  select a unique weak KAM solution to stationary HJE. This selected viscosity solution also serves as the global energy landscape of the original stochastic process.  This selection for stationary HJEs can be described by first taking the long time limit  and then taking the zero noise limit, which also provides a special construction of vanishing viscosity approximation.
\end{abstract}

\section{Introduction}

 The classical Kolmogorov-Arnold-Moser (KAM) theorem studied the existence of quasi-periodic solutions of a perturbed integral system. A canonical map converting original phase variables $(p,x)$ to action-angle variables $(P,X)$ can be used to transform a perturbed Hamiltonian dynamics into a (nearly) integrable system in terms of the action-angle variable. This is a classical way to study the perturbed Hamiltonian dynamics, pioneered by \textsc{Kolmogorov, Arnold, and Moser}. The canonical map is globally implicitly solved from a generating function $u(P,x)$, which solves an associated stationary Hamilton--Jacobi equation (HJE) for each action variable $P$ in the classical sense \cite{Arnold, evans2008weak}. This procedure is, in general, very hard and can only be taken in a small perturbation way. For general Hamiltonian systems far away from an integrable one, the Aubry-Mather theory developed by \textsc{Aubry} \cite{aubry1983twist}, \textsc{Mather} \cite{Mather1982} introduced various action minimizing sets and effective Hamiltonian $\bar{H}(P)$ for the corresponding Lagrangian dynamics to obtain a global understanding of the general Hamiltonian dynamics. Instead of finding the classical solution to stationary HJEs with an effective Hamiltonian $\bar{H}(P)$, the notion of a global non-differential solution defined in the viscosity sense was introduced by \textsc{Crandall and Lions} \cite{crandall1983viscosity}. Solving the family of stationary HJEs with an effective Hamiltonian in the viscosity sense has many important applications, for instance, the cell problem for the homogenization theory by \textsc{Lions, Papanicolaou, and Varadhan} \cite{lions1986homogenization} in the late 80's.

The above celebrated results on stationary HJEs lead to the development of the weak KAM theory, pioneered particularly by \textsc{Fathi} \cite{fathi1997theoreme, Fathi1998SurLC} and \textsc{Ma\~n\'e} \cite{mane1996generic}, \textsc{E} \cite{weinan1999aubry}. It is well known that solutions to dynamic HJEs can be represented in terms of the Lax--Oleinik variational formula, which computes the least action of the corresponding Lagrangian at a finite time. The weak KAM theorem by Fathi \cite{Fathi1998SurLC, fathi2008weak} proved convergence from the Lax--Oleinik semigroup representation for the dynamic HJE to a variational representation of solutions to the stationary HJE, known as weak KAM solutions. In other words, weak KAM solutions are invariant solutions for the Lax--Oleinik semigroup; see \eqref{res}.
 This variational representation   for stationary solutions to HJE uses the Ma\~n\'e potential \eqref{value} to compute the least action path in an undefined time horizon. Particularly, if one solves the least action problem (backward characteristic of the Hamiltonian dynamics) as $t\to -\infty$, which tracks back to some invariant sets of the Hamiltonian dynamics, then the Ma\~n\'e potential becomes the so-called Peierls barrier with an infinite time horizon. More importantly, those backward characteristics obtained through PDE methods can be used to characterize invariant sets in the Aubry-Mather theory for the original dynamical system.

Finding the integrable structure and characterizing those invariant sets of the original dynamics via the variational representation of the globally defined stationary solution to HJE is the central idea in the development of the weak KAM theory and thus is viewed as a generalization of KAM theory in terms of the "Hamilton-Jacobi methods". By using the concept of the projected Aubry set, the stationary variational representation only relies on the boundary values of the solution $W(x_j)$ on the projected Aubry set $\mathcal{A}$ and the Peierls barriers
\begin{equation}\label{xxx}
W(x) = \min_{x_j\in \text{invariant states}} { W(x_j) + h(x; x_j)}.
\end{equation}

This variational representation formula \eqref{xxx} is indeed already derived in 1969 in the Freidlin-Wentzell theory for the large deviation principle in the zero noise limit of the invariant measures for diffusion processes \cite{venttsel1969limiting, ventsel1970small}. The Freidlin-Wentzell theory comprehensively studied the global quasi-potentials that are globally defined and are later called the Ma\~n\'e potentials during the development of the weak KAM theory in the late 90's. The local quasi-potentials within the basin of attraction of each stable state are widely used in computing the barriers for exit problems of a stochastic dynamics. Using quasi-potentials for each basin of attraction of stable states, the Peierls barrier can be computed and can be used to construct the rate function $W(x)$ for the large deviation principle of invariant measures for those stochastic processes cf. \cite[Chapter 6, Theorem 4.3]{FW}.

This paper focuses on reinterpreting Freidlin-Wentzell's variational construction of the rate function for invariant measures in the large deviation principle from the weak KAM perspective. Through a simple one-dimensional irreversible diffusion process on a torus, we explicitly characterize all the essential concepts in the weak KAM theory, such as the projected Aubry/Mather sets, the variational representation, and the unique selection principle for boundary data on the projected Aubry set provided by the large deviation principle. These weak KAM characterizations also, in turn, help us understand the global properties of the rate function in the large deviation principle through a geometric/dynamic viewpoint and the construction of the global energy landscape, which guides the most probable path/states in the zero noise limit of the stochastic process.

We first clarify that to study the rate function for the large deviation principle, we are only interested in the critical energy level, i.e., the critical Ma\~n\'e value $c^*=0$; see Section \ref{sec3_critical}. Then the stationary HJE is
\begin{equation}
H( W'(x), x) = W'(W'-U') = 0.
\end{equation}
Here the Hamiltonian $H(p,x)=p(p-U'(x))$ can be derived from the WKB expansion for the family of invariant measures $\pi_\eps(x)= e^{-\frac{W_\eps(x)}{\eps}}$. The corresponding Lagrangian is also called the Ma\~n\'e Lagrangian
\begin{equation}
L(s,x) = \frac14(s+U'(x))^2.
\end{equation}
The most distinguishing feature is that the Ma\~n\'e Lagrangian $L(s,x)\geq 0$ and $L=0$ if and only if $s=-U'(x)$. This reduces the action minimizing path to a first-order ODE problem. Although it is not directly related to the large deviation principle, there are also other dynamics corresponding to the effective Hamiltonian $\bar{H}(P)>c^*=0$. This defines different invariant sets and action minimizing measures, which become more involved, particularly for high dimensions; cf. \cite{sorrentino2015action}.

Some results presented in this paper might be direct consequences of general results in the weak KAM theory; however, we nevertheless provide more explicit information and elementary proofs for the simple example on $\bS^1$ that are particularly connected to the Freidlin-Wentzell theory.
For a comprehensive study or survey of the weak KAM theory, we refer to Fathi's book \cite{fathi2008weak} and some very recent books \cite{sorrentino2015action, Tran21}. For recent developments of the weak KAM theory in noncompact domains, we refer to \cite{fathi2009hausdorff} for the regularity of Hamiltonians and to \cite{Wang_Wang_Yan_2019} for contact
Hamiltonian systems where $H(\nabla u(x), u(x), x)$ also depends on $u$. We particularly refer to \cite{Contreras2001, gomes2008generalized, ichihara2009long, Fathi16, ishii2020vanishing} for the weak KAM solution as a vanishing discount limit in compact/noncompact domains. The vanishing discount limit of the corresponding optimal control problem does provide a selection principle for weak KAM solutions. See also \cite{cgmt15, mitake2017selection}, which include a degenerate diffusion term in the vanishing discount limit problem, and see \cite{ITM2017} for a duality framework in the vanishing discount problem for fully nonlinear, degenerate elliptic Hamiltonians. The selection principle from the vanishing discount limit is, however, different from the selection principle provided by the large deviation principle for invariant measures. Nonuniqueness of the viscosity solution to the stationary HJE is an important issue even if the Hamiltonian is strictly convex. The nonuniqueness for the vanishing viscosity limit of both stationary HJE and stationary conservation laws are important problems. For instance, constructing a vanishing viscosity approximation to stationary HJE which has a uniform limit is still open \cite{Tran21}. For the stationary transonic flow, multiple stationary entropy shocks were constructed in Jameson et al. \cite{embid1984multiple}, and a selection principle via the stability of the time-dependent problem was studied by Embid et al. \cite{steinhoff1982multiple} and numerically computed by Shu \cite{shu1988total}. We will explain our results below.

 For a one-dimensional irreversible diffusion on torus, in Proposition \ref{prop_h}  we explicitly compute  the detailed structures of the Peierls barrier $h(y;x_i)$, which is a central concept in the definition of the projected Aubry set in the Mather-Aubry theory developed in 80's. Then we use the Peierls barrier $h(y;x_i)$ to study the detailed structure of Freidlin-Wentzell's variational construction of the rate function $W^*(x)=W(x)-\min_{x\in\bS^1} W(x)$ of the large deviation principle for invariant measures of the diffusion process on $\bS^1$. 
This includes two essential steps: (i) the global adjustment for  boundary data $W(x_i)$ at the local minimums $x_i$ of the original skew periodic potential $U(x)$; (ii) the local trimming via a variational representation for $W(x)$; see \eqref{xxx} and the local version \eqref{localW}. 

For step (i), we give an alternative proof in Lemma \ref{lem:disW} that the variational formula for the boundary data satisfies the discrete weak KAM problem. These boundary data indeed uniquely determine a maximal Lipschitz continuous viscosity solution (see Proposition \ref{prop_maximalL}) and thus the unique selection principle \eqref{wi} for these boundary data is essential to construct a global energy landscape for the original stochastic process. Particularly, when the original potential $U(x)$ is periodic itself, then we verify that the variational formula for the boundary data must give exactly the same values as the original landscape $U(x)$; see Proposition \ref{prop_U}. As a byproduct, we also show how to obtain a set of consistent boundary data satisfying the discrete weak KAM problem; see Proposition \ref{prop_gene}.

For step (ii), based on the globally adjusted boundary data and Peierls barriers $h(y;x_i)$, we obtain a local variational representation for $W(x)$, which only depends on the adjacent boundary data and barrier functions; see Proposition \ref{prop_local}. This local trimming procedure reduces the computations, as shown in the examples in Figure \ref{fig_w000}.
After explaining the variational construction for $W(x)$, in Proposition \ref{prop_W}, we prove $W(x)$ is a global viscosity solution to
\begin{equation}\label{Htt}
H(W'(x), x)=W'(x)(W'(x)-U'(x))=0, \quad x\in \bS^1
\end{equation}
satisfying the boundary data uniquely determined via \eqref{wi}.

Section \ref{sec4} focuses on the weak KAM interpretation for $W^*(x)=W(x)-\min W(x)$. We characterize that the projected Aubry set $\mathcal{A}$ is equal to the projected Mather set $\mathcal{M}_0$ and is equal to all the critical points of $U(x)$. In Corollary \ref{cor_kam}, we prove $W^*(x)$ is a weak KAM solution to \eqref{Htt} of negative type, in which the calibrated curves tracking back to the projected Mather set and those curves are simply solved by the `uphill/downhill' first order ODEs; see \eqref{ODE2} and \eqref{ODE1}. Moreover, the constructed $W(x)$ is the maximal Lipschitz continuous viscosity solution satisfying the boundary data $W(x_i)=W_i$ given in \eqref{wi}. These boundary data are chosen via \eqref{wi} and Lemma \ref{lem:disW}, so that $W^*(x)=W(x)-\min_{x\in\bS^1} W(x)$ is the rate function for the large deviation principle of the invariant measures of the diffusion process on $\bS^1$. While all the invariant sets characterized above are the uniqueness sets for the weak KAM solutions to HJE \eqref{Htt}, there are other uniqueness sets and we show that the uniqueness sets must contain all the local maximums/minimums; see Lemma \ref{prop_mather}. After all these understandings from the weak KAM perspective, we give a probability interpretation for the weak KAM solution $W^*(x)$. 

{\blue In Section \ref{sec4_nonunique}, we provide more understandings of the obtained weak KAM solution $W^*(x)$, including   the exchange of  double limits and how one selects a meaningful weak KAM solution that captures the asymptotic behavior of the original stochastic process at each local attractors. In Proposition \ref{prop_exchange}, using the property that $W^*(x)$ is an invariant solution to the Lax-Oleinik semigroup representation for the corresponding dynamic HJE,  we prove that for a special initial distribution, the large time limit and the zero noise limit can be exchanged for the distribution $\rho_\eps(x,t)$ of the diffusion process, i.e., ``$\lim_{\eps \to 0} \bbs{-\eps \log \lim_{t\to +\8} \rho_\eps(x,t) }=\lim_{t\to +\8} \lim_{\eps \to 0} - \eps \log \rho_\eps(x,t).$''. In general, the double limits in both sides exist for any initial data. The RHS limit exists \cite[Theorem 2.1]{BSBS} but is not unique. However, the LHS limit is unique, which provides a selection principle. That is to say we first take the long time limit $\lim_{t\to +\8} \rho_\eps(x,t)$ which is unique due to ergodicity  and then take the zero noise limit $\eps \to 0$ due to the large deviation principle for invariant measures.
In Section \ref{sec5.3}, we discuss  our selection principle for weak KAM solutions, which are in general not unique; see examples in Section \ref{sec5.1}. $W^*(x)=\lim_{\eps \to 0} \bbs{-\eps \log \lim_{t\to +\8} \rho_\eps(x,t) }$ with the variational formula for boundary data $W(x_i)$ serves as a meaningful selection principle because it is proved to be the rate function of the large deviation principle for the invariant measures.   Indeed,
the associated viscous HJE computed from the WKB reformulation of the invariant measure $\pi_\eps(x) = e^{-\frac{W_\eps(x)}{\eps}}$ of the irreversible diffusion process is
\begin{equation}\label{1.5}
 W'_\eps(W'_\eps - U')=\eps (W_\eps-U)''.
\end{equation}
As the rate function of the associated irreversible diffusion process on $\bS^1$, $W^*(x)$ is unique and can be regarded as the limit of $W_\eps(x)$, in the sense of the large deviation principle (see \eqref{LD2}). \eqref{1.5} also provides a special construction of a viscosity approximation, which has a uniform vanishing viscosity limit. We point out that in general, the vanishing viscosity approximation method for stationary HJEs only has converged subsequences whose limit is  not unique. Our selection principle is different from the widely studied selection principle via the discount limit of the associated optimal control problem in an infinite time horizon. The discount limit method usually can not capture the long time behavior of the original dynamics.}
Based on the selection principle in the large deviation sense, the periodic Lipschitz continuous global energy landscape $W^*(x)$, determines the most probable states/path for the original stochastic dynamics as the noise goes to zero; see the generalized Boltzmann analysis through the calibrated curves in Section \ref{sec4_prob}. 

The remaining parts of this paper are organized as follows. In Section \ref{sec2}, we introduce the Langevin dynamics on the circle $\bS^1$ and describe the large deviation principle for the invariant measures with both an illustrative example and the abstract result by the Freidlin-Wentzell theory. In Section \ref{sec3}, we give explicit properties for the Peierls barriers and use them to prove the variational formula for the global adjustment of boundary data and to construct the global energy landscape $W^*(x)$. The local variational representation, the consistency check, and the viscosity solution property for $W^*(x)$ are given in Section \ref{sec3_localR}, Section \ref{sec3_U}, and Section \ref{sec3_vis}, respectively. Section \ref{sec4} focuses on the characterization of Aubry/Mather sets and the weak KAM solution properties. $W^*(x)$ being the weak KAM solution is proved in Section \ref{sec4_kam}, whose uniqueness depending on the self-consistent boundary data is discussed in Section \ref{sec4_gene}. The nonuniqueness of weak KAM solutions and our selection principle, compared with other selection methods, are given by Section \ref{sec4_nonunique}. The probability interpretation for Freidlin-Wentzell's construction of $W^*(x)$ is discussed in Section \ref{sec4_prob}.

\section{The rate function of large deviation principle for the invariant measure of Langevin dynamics on a circle $\bS^1$}\label{sec2}
We first introduce a very simple stochastic model, which however, contains all the representative properties to study the relations between the large deviation principle for invariant measures and the weak KAM theory. This is a one-dimensional irreversible diffusion process on the periodic domain $\bS^1$, in which the WKB reformulation for the invariant measure $\pi_\eps(x)$ gives a stationary HJE. In Section \ref{sec2_LD}, we first use a single-well non-periodic potential $U(x)$ to illustrate the local trimming of the potential $U(x)$ brought by the large deviation principle, and then we describe the general large deviation principle for the invariant measure $\pi_\eps(x)$, which was proved by Freidlin-Wentzell \cite{venttsel1969limiting, ventsel1970small}. The associated variational formula for the rate function will be introduced in detail in Section \ref{sec3}.

\subsection{Langevin dynamics on a circle $\bS^1$}
In this subsection, we first introduce a Langevin dynamics on the simplest closed manifold $\bS^1$.
We start from a Langevin dynamics on a circle $\bS^1$ with a drift in gradient form, i.e., there exists a smooth skew periodic potential $U(x)$ such that
$b(x) = -U'(x), x \in \bS^1$. This Langevin dynamics on $\bS^1$ reads
\begin{equation}\label{LV}
\ud x = - U'(x) \ud t + \sqrt{2\eps} \ud B.
\end{equation}
Here the skew periodicity of the smooth function $U(x)$ implies there exists a smooth periodic function $\tilde{U}(x)$ such that $U(x)=\tilde{U}(x)-\bar{b}x$ for a constant $\bar{b}$.
Therefore, $U'(x), x\in \bS^1$ is a smooth periodic function and
\begin{equation}\label{sk}
\int_0^1 U'(x) \ud x = U(1) - U(0) = -\bar{b}.
\end{equation}
We refer to the dashed black line in Figure \ref{fig_p_m} for an example of a skew periodic potential $U(x)$ with three local minimums in one skew period.

The  Kolmogolov forward equation corresponding to \eqref{LV} is
 \begin{equation}\label{FPt}
\pt_t \rho_\eps =  (\rho_\eps  U')' + \eps \rho_\eps'' \quad \text{ in } \bS^1.
\end{equation}
 Plugging  the WKB reformulation $\rho_\eps(x,t)= e^{-\frac{\psi_\eps(x,t)}{\eps}}$ into \eqref{FPt}  and then taking $\eps\to 0$, we obtain the dynamic HJE
  \begin{equation}
 \pt_t \psi(x,t) + H(\pt_x \psi(x,t), x) = 0, \quad x\in \bS^1,
 \end{equation}
where the Hamiltonian $H: \bR\times \bS^1 \to \bR$ is
 \begin{equation}\label{Hs}
 H(p,x) = p(p- U').
 \end{equation}
 Then the corresponding Lagrangian,  as the convex conjugate of $H(p,x)$, is given by
 \begin{equation}\label{LLL}
 \begin{aligned}
 L(s,x) = \sup_{p\in\bR} \bbs{s p - H(p,x)} = s p^* - H(p^*, x) = \frac{1}{4}(s+U'(x))^2,
 \end{aligned}
 \end{equation}
 where $p^*$ solves
$
 s = \pt_p H(p^*, x) = 2p^* -  U'(x).
$
It is easy to see  Hamiltonian $H(p,x)$ is strictly convex w.r.t. $p$, periodic w.r.t. $x$ while Lagrangian $L(s,x)$ is strictly convex w.r.t. $s$, periodic w.r.t. $x$. Another important property is 
\begin{equation}\label{L0}
 L(s,x)\geq 0  \text{ and }  L=0 \text{ if and only if } s=-U'(x).
 \end{equation}
 The above ODE flow $\dot{x}= - U'(x)$ can be naturally embedded into the Euler-Lagrangian flow $(x, \dot{x})(t)$ on the tangent bundle $T\bS^1.$ This special Lagrangian graph  $(x, -U'(x))$ enables explicit computations for invariant measures and action minimizing measures/curves; see \textsc{Ma\~n\'e} \cite{mane92}. So this  Lagrangian \eqref{LLL} is also known as the Ma\~n\'e Lagrangian \cite{figalli2012aubry}.

\subsection{The invariant measure $\pi_\eps(x)$ and the large deviation principle as $\eps\to 0$}\label{sec2_LD}
The corresponding invariant measure $\pi_{\eps}(x)$ satisfies the stationary Fokker-Planck equation
\begin{equation}\label{FP}
 \eps \pi_{\eps}''  + ( U' \pi_{\eps} )' = 0 \quad  \text{ in } \bS^1.
\end{equation}
Without loss of generality, we assume $\min U(x) = 0.$
The unique periodic positive solution $\pi_{\eps}$ is given by  
\begin{equation}\label{pie}
\pi_{\eps}(x)= C_\eps e^{-\frac{U(x)}{\eps}} \int_x^{x+1} e^{\frac{U(y)}{\eps}} \ud y, \quad x\in \bS^1, 
\end{equation}
where $C_\eps$ is a normalization constant such that $\int \pi_\eps = 1.$ 
The integral function in \eqref{pie} can be regarded as a corrector to make $\pi_{\eps}(x)$ to be periodic.
Indeed, recast \eqref{pie} as 
$
\pi_{\eps}(x)\propto  \int_x^{x+1} e^{\frac{U(y)-U(x)}{\eps}} \ud y,
$
which is periodic.

If $\bar{b}=0$, then $U(x)$ is periodic and the above integral in \eqref{pie} is a constant. Thus the Langevin dynamics \eqref{LV} is a reversible process and the periodic invariant measure is directly given by $\pi_{\eps}(x)\propto e^{-\frac{U(x)}{\eps}}$. Indeed, from \eqref{pie}, one can compute the steady flux 
\begin{equation}
J_\eps = \eps \pi_\eps' + U' \pi_\eps =  \eps C_\eps  \bbs{ e^{-\frac
{\bar{b}}{\eps} } -1 }.
\end{equation}
$\bar{b}=0$ is equivalent to $J_\eps = 0$ pointwise and thus equivalent to reversibility of the Langevin dynamics \eqref{LV}.
 Then it is obvious that $U(x) = - \eps \log \pi_{\eps}(x)$ is the rate function in the large deviation principle for the reversible invariant measure $\pi_{\eps}(x)$. 

However, if $\bar{b}\neq 0$, then the Langevin dynamics \eqref{LV} is irreversible and the invariant measure does not have a straightforward formula to serve as a rate function in the large deviation principle.
In this case, we define a WKB reformulation
\begin{equation}\label{We}
W_{\eps}(x) := -\eps \log \pi_{\eps}(x) = U(x) - \eps \log  \int_x^{x+1} e^{\frac{U(y)}{\eps}} \ud y - \eps \log C_\eps, \quad x\in \bS^1.
\end{equation}
From \eqref{pie}, since the solution $\pi_\eps(x)$ to \eqref{pie} has a unique closed formula, so $W_\eps(x) = -\eps \log \pi_\eps(x)$ can be uniquely computed upto a constant.
 
{\blue If as $\eps \to 0$, the limit $W_\eps(x) \to W^*(x)$ exists for some periodic function $W^*(x)$, then this limit $W^*(x) $ is the rate function for the large deviation principle of the invariant measure $\pi_{\eps}(x)$. For a peculiar case that $U(x)$ is strictly monotone, then $\pi_\eps(x)= \frac{C}{|U'(x)|} + O(\eps)$ does not have an exponential asymptotic behavior. In this case, $W^*(x)\equiv 0$. Hence we only consider the case when $U(x)$ has minimums.}

\subsubsection{Illustration of the Laplace principle for a single-well potential $U(x)$}
In this subsection, we use the following simple example with a single well non-periodic potential $U(x)$ to explicitly compute and simulate the convergence from $W_\eps(x)$ to the globally defined, periodic, Lipschitz continuous rate function $W^*(x);$ see plots of $U(x),W_\eps(x)$ and $W(x)$ in Figure \ref{fig_ld}.

Take 
\begin{equation}\label{U1}
U(x) = \cos(2\pi x)-\cos(\pi x) + \frac{9}{8}, \quad x\in[0,1].
\end{equation}
Then $U(x)$, $x\in(0,1)$ is a  single basin of attractor of the stable state $x_0=\frac{1}{\pi}\arccos \frac14$, $U_{\min}=U(x_0)=0$ and the boundary difference is  $-\bar{b}=U(1) - U(0) = 2.$ One can do skew periodic extension to a $C^1$ function on $\bR$ by $U(x+k)=U(x)-\bar{b}k$, $k\in\bZ$. Define $x^* = \frac{2 }{3}$ which has the same value as $U_{\text{exit}}:=U(0)=U(x^*)$ for the exit problem, we have 
$$0=U_{\min}<U_{\text{exit}} = U(0)=U(x^*)<U(1)=U_{\max}.$$
Since $U_{\max}=U(1)$, $b+U_{\max}=U(0)$ and \eqref{pie} can be reformulated with a different $C_\eps$
\begin{equation}\label{int1}
\begin{aligned}
\pi_\eps(x)=& C_\eps e^{-\frac{U(x)}{\eps}} \bbs{\int_x^1 e^{\frac{U(z)-U_{\max}}{\eps}} \ud z +   \int_0^x e^{\frac{U(z)-\bar{b} - U_{\max}}{\eps}} \ud z}\\
=&C_\eps e^{\frac{-U(x)}{\eps}} \bbs{\int_0^x e^{\frac{U(z)-U(0)}{\eps}} \ud z + \int_x^1 e^{\frac{U(z) - U(1)}{\eps}} \ud z}, \quad x\in \bS^1.
\end{aligned}
\end{equation}
Since $\min U(x)=0$, $e^{\frac{-U(x)}{\eps}}=O(1)$ and by the Laplace principle
  $\int_0^x e^{\frac{U(z)-U(0)}{\eps}} \ud z + \int_x^1 e^{\frac{U(z) - U(1)}{\eps}} \ud z \geq O(1)$ as $\eps \to 0$. Thus $C_\eps \leq O(1)$.
When $x\in[0,x^*]$, $U(x)=\min\{U(x), U_{\text{exit}}\}$, then one  can directly apply the Laplace principle for the integrals in \eqref{int1}. But for $x\in[x^*, 1]$, $U(0)=\min\{U(x), U_{\text{exit}}\}$, so the first integration in \eqref{int1} shall be recast as
\begin{equation}
e^{\frac{-U(x)}{\eps}} \int_0^x e^{\frac{U(z)-U(0)}{\eps}} \ud z = e^{\frac{ - U(0)}{\eps}}  \int_0^x e^{\frac{U(z)-U(x)}{\eps}} \ud z.
\end{equation}
Then from WKB reformulation \eqref{We}, we obtain
\begin{equation}\label{Lap0}
\begin{aligned}
W_\eps(x) = -\eps \log \pi_\eps(x)& = \min\{U(x), U_{\text{exit}}\} \\
&-\eps \log \left\{ \begin{array}{cc}
 C_\eps\bbs{\int_0^x e^{\frac{U(z)-U(0)}{\eps}} \ud z + \int_x^1 e^{\frac{U(z) - U(1)}{\eps}} \ud z}, & x\in [0,x^*];\\
C_\eps\bbs{\int_0^x e^{\frac{U(z)-U(x)}{\eps}} \ud z + e^{\frac{  U(0)-U(x)}{\eps}}\int_x^1 e^{\frac{U(z) - U(1)}{\eps}} \ud z}, & x\in [x^*,1].
\end{array} \right.
\end{aligned}
\end{equation}
Now every integral term in the above desingularization formula is $O(1)$ and can be numerically implemented.
Then by the Laplace principle, we show the last term in \eqref{Lap0} is a $o(1)$ term
\begin{equation}
\begin{aligned}
 \eps \log \left\{ \begin{array}{cc}
  C_\eps\bbs{\frac12\sqrt{\frac{2\pi\eps}{|U''(0)|}} + \frac12\sqrt{\frac{2\pi\eps}{|U''(1)|}}}, & x\in [0,x^*];\\
 C_\eps\bbs{\frac{\eps}{U'(x)} + e^{\frac{  U(0)-U(x)}{\eps}}\frac12\sqrt{\frac{2\pi\eps}{|U''(1)|}}  } , & x\in [x^*,1].
\end{array} \right.
\end{aligned}
\end{equation}
Hence, we obtain the rate function $W^*(x)$ for the large deviation principle
\begin{equation}\label{converge}
W_{\eps}(x) \to W^*(x) :=   \min\{U(x),U_{\text{exit}}\}, \quad \text{ in } \bS^1. 
\end{equation}
In Figure \ref{fig_ld}, the WKB reformulation $W_\eps(x)$  is plotted with $\eps=0.05,0.01, 0.005, 0.003, 0.002,0.001$. A  uniform convergence from $W_\eps(x)$ to the rate function $W^*(x)$ is shown as $\eps \to 0$.
 
 \begin{figure}
 \includegraphics[scale=0.5]{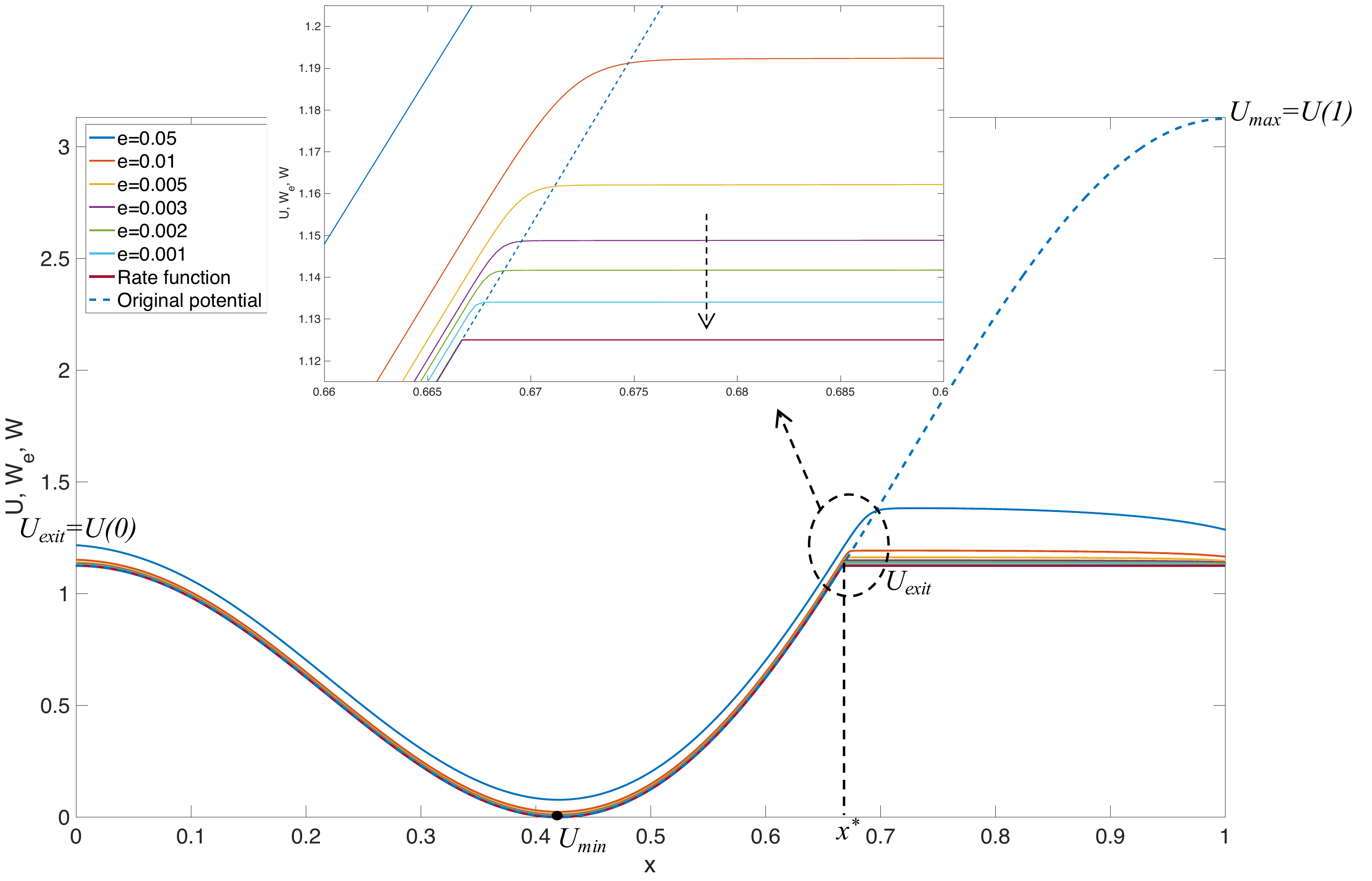} 
  \caption{The plot of $U(x)$ in \eqref{U1},  $W_\eps(x)$ in \eqref{Lap0} with different values of $\eps$ and the rate function $W^*(x)$ in \eqref{converge}. As $\eps\to 0$, a uniform convergence from $W_\eps(x)$ to the rate function $W^*(x)$ is shown with a zoom-in plot near the non-differential connection point, where $U(x)$ is cut off from above by the constant $U_{\text{exit}}$. $U(x)$ is skew periodic while both $W_\eps(x)$ and $W(x)$ are periodic.}\label{fig_ld}
 \end{figure}
 
For other cases where $U(x)$ has multiple wells, this simple cut-off (local trimming) by a constant as described in the above example is not enough. A globally defined adjustment for the values at each local minimum of $U(x)$ needs to be constructed first and then apply the local trimming procedures; see Section \ref{sec3_FW}. Finding the correct global energy landscape with multiple wells, after proper global adjustment and gluing, is important for the rare events computations; cf. \cite{Ebook}, \cite{ge2012landscapes}, \cite{zl16}, and \cite{GLLL20}. The global energy landscape correctly measures the action/energy required for the transition from one state to another state, which in general is not the original potential function $U(x)$, as we have already seen from the above simple example (Figure \ref{fig_ld}).

For the general case, the explicit variational formula for the limit $\lim_{\eps\to 0} W_\eps(x) = W^*(x) = W(x) - \min W(x)$ was discovered by \cite[Section 8]{venttsel1969limiting} (see below \eqref{FW_W}) and we will describe it in detail in Section \ref{sec3}.
In \cite[Section 8]{venttsel1969limiting} (see also \cite[Chapter 6, Theorem 4.3]{FW}), 
this limit $W^*(x)$ is proved to be the rate function for the large deviation principle of $\pi_\eps(x)$ in the following sense. For any $\gamma>0$, there exists $\delta_0$ such that for any $\delta < \delta_0$, there exists $\eps_0$ such that for any $\eps< \eps_0 $
\begin{equation}\label{LD}
W(x) - \min W(x) - \gamma \leq -\eps \log \pi_\eps(B_\delta(x)) \leq W(x) - \min W(x) + \gamma.
\end{equation}
This statement is equivalent to Varadhan's   definition \cite[Definition 2.2]{VR} for the large deviation principle on compact domain. Indeed, 
taking $\liminf$ and $\limsup$ w.r.t. $\delta$ and $\eps$, we have
\begin{equation}\label{LD1}
\begin{aligned}
W(x) - \min W(x) - \gamma \leq& \liminf_{\delta \to 0} \liminf_{\eps \to 0} -\eps \log \pi_\eps(B_\delta(x)) \\
 \leq& \limsup_{\delta \to 0} \limsup_{\eps \to 0} -\eps \log \pi_\eps(B_\delta(x))  \leq W(x) - \min W(x) + \gamma.
 \end{aligned}
\end{equation}
Thus since $\gamma$ is arbitrary, 
\begin{equation}\label{LD2}
\begin{aligned}
W(x) - \min W(x) =& \liminf_{\delta \to 0} \liminf_{\eps \to 0} -\eps \log \pi_\eps(B_\delta(x)) 
=& \limsup_{\delta \to 0} \limsup_{\eps \to 0} -\eps \log \pi_\eps(B_\delta(x)). 
 \end{aligned}
\end{equation}
Thus, \eqref{LD1} is exactly the lower bound and upper bound estimates in \cite[Definition 2.2]{VR} and thus implies the large deviation principle for the invariant measure $\pi_\eps(x)$ with the rate function $W(x)-\min W(x)$. In one dimension, \cite{FG2012} provides a direct proof for the limit of $-\eps \log \pi_\eps(x)$ via Laplace's principle and recovers the variational formula \eqref{FW_W}.

We remark that for general stochastic processes,  for instance for the large deviation principle in the thermodynamic limit for the invariant measures of the chemical reaction process \cite{GL22t}, one can also directly study the 
  upper semicontinuous (USC) envelope of the WKB reformulation $W_\eps(x)$
  \begin{equation}
  \overline{W}(x) := \limsup_{\eps \to 0, \, x_j \to x} W_\eps(x_j)
  \end{equation}
 and the
  lower semicontinuous (LSC)  envelope of $W_\eps(x)$
  \begin{equation}
  \underline{W}(x) := \liminf_{\eps \to 0, \, x_j \to x} W_\eps(x_j).
  \end{equation}
  Then by these definitions, if the large deviation principle \eqref{LD2} holds then necessarily one obtain
\begin{equation}
\underline{W}(x) \leq W(x)-\min W(x) \leq \overline{W}(x).
\end{equation}
In \cite[Proposition 4.1]{GL22t}, under the detailed balance condition for the chemical reaction process, the USC envelope $\overline{W}(x)$ is proved to be a USC viscosity solution to the corresponding stationary HJE in the Barron-Jensen sense \cite{Barron_Jensen_1990}.

  \subsubsection{Variational formula for $W^*(x)$ through Varadhan's lemma}
  Recall the WKB reformulation $W_\eps(x)$  of invariant measure $\pi_\eps(x)$. From the large deviation principle \eqref{LD2} for the invariant measure $\pi_\eps(x)$,  
  Varadhan's lemma \cite{Varadhan_1966, VR} provide another variational formula for the rate function $W^*(x)$. Below, we carry out details for this formula.

  Using Varadhan's lemma  \cite[Theorem 2.5]{VR}, we know for any test function $f\in C(\bS^1)$,
  \begin{equation}
  \sup_{y\in \bS^1} (f(y) - W^*(y) ) = \lim_{\eps \to 0} -\eps \log \int_{ \bS^1 } e^{ \frac{ f(y) }{\eps}  } \pi_\eps( y ) \ud y.
  \end{equation}
  Denote the integral above as $A_\eps$ and compute it via the closed formula \eqref{We}
  \begin{equation}
  \begin{aligned}
  A_\eps := \int_{\bS^1} e^{ \frac{ f(y) }{\eps}  } \pi_\eps( y ) \ud y =& C_\eps \int_0^1 e^{\frac{f(y) - U(y)}{\eps}}  \int_y^{y+1} e^{\frac{U(z)}{\eps} } \ud z  \ud y\\
  =& C_\eps \int_0^1 \int_0^z e^{\frac{f(y) - U(y) + U(z)}{\eps}} \ud y \ud z + C_\eps e^{-\frac{\bar{b}}{\eps}} \int_0^1 \int_z^1 e^{\frac{f(y) - U(y) + U(z)}{\eps}} \ud y \ud z.
  \end{aligned}
  \end{equation}
  Here we used exchange order of integrals and skew periodicity \eqref{sk}.
    However, there is no simple way to directly study this globally defined limiting function. Thus we go back to Freidlin-Wentzell's variational construction below.
  
In the next section,   under the assumption that there are finite critical points $x_i$ for $U(x)$, we reinterpret and give an alternative proof for the construction of  a global periodic energy landscape $W(x)$ from locally defined quasi-potential after adjusting levels and then  proper trimming and gluing.   
 From the formula of $W_\eps(x)$ in \eqref{We}, the limiting rate function $W^*(x)$ can be viewed as the original potential $U(x)$ with an additional corrector computed from the Laplace principle for $\eps \log  \int_x^{x+1} e^{\frac{U(y)}{\eps}} \ud y$. This provides the recipes to (i) globally adjust the levels $W(x_i)$ of each quasi-potential for each stable basin via \eqref{wi}  and then (ii) to locally trimming from above and glue to construct $W^*(x)=W(x)-\min_i W(x_i)$ via \eqref{FW_W} or \eqref{localW}.
  This global energy landscape is continuous and is proved to be the 
 rate function of the large deviation principle for the invariant measure of Langevin dynamics on a circle $\bS^1$ \cite[Chapter 6, Theorem 4.3]{FW}.  We will also prove that $W^*(x)$ is a viscosity solution to HJE \eqref{HJE_w}; see Proposition \ref{prop_W}.
 
 \section{Freidlin-Wentzell's variational construction of periodic Lipschitz continuous global energy landscape $W^*(x)$}\label{sec3}
In this section, we focus on the detailed description of Freidlin-Wentzell's variational construction of the rate function $W^*(x)$; see \eqref{disW} and \eqref{FW_W}. Using the one-dimensional irreversible diffusion on torus \eqref{FPt}, we give an alternative elementary proof for the variational formula \eqref{disW} for determining the boundary data and elaborating explicit properties of $W^*(x)$ such as the shape of non-differentiable part. Those boundary values are globally defined and are the most crucial ingredient to obtain the unique, Lipschitz continuous, periodic global energy landscape that can correctly represent the exponentially small probability in the large deviation principle.
Before giving the variational formula, we revisit and prove the detailed characterizations of two essential concepts of least action functions: the quasi-potential (aka. the Ma\~n\'e potential) and the Peierls barrier. Based on these explicit properties, we then give the construction of  a global energy landscape in Section \ref{sec3_FW} based on (i) the global adjustment for boundary data on the local minimums and (ii) a local trimming procedure via adjacent boundary data and the Peierls barrier; see Lemma \ref{lem:disW} and Proposition \ref{prop_local}. At the last, we give a consistent verification to show the variational representation of $W^*(x)$ is indeed   reduced to the original potential function $U(x)$ if the diffusion process is reversible; see Proposition \ref{prop_U}.
   
Let us first clarify that we always work on the case that $U(x)$ has finite many critical points indexed as follows.  Assume there are $k$ stable local minimums
 \begin{equation}\label{id1}
x_1, x_2, \cdots, x_k
 \end{equation}
 of $U(x)$, interleaved by $k$ unstable local maximums
  \begin{equation}\label{id2}
x_{\frac12}, x_{1+\frac12}, \cdots, x_{k+\frac12}=x_{\frac12}+1.
 \end{equation}
 With out loss of generality, we assume
 \begin{equation}
 0 =  x_{\frac12} < x_1 < x_{1+\frac12}< x_2 < \cdots < x_k < x_{k+\frac12} = 1.
 \end{equation}
 Denote other duplicated points outside $[0,1]$ as $x_{i+\ell k} = x_{i}+\ell\in \bR$ for any $\ell\in\bZ.$
 
 \subsection{Properties of Peierls barrier $h(y; x_i)$ and Ma\~n\'e potential $v(y;x_i)$}
In this subsection, we revisit two essential concepts of least action functions: the quasi-potential  (aka. the Ma\~n\'e potential) and the Peierls barrier. In our one dimensional example, we further explore the explicit shape characterizations for those least action functions, which are important properties for the later construction of global energy landscape.

 \subsubsection{Quasi-potentials and critical Ma\~n\'e value $c^*=0$}\label{sec3_critical}
Quasi-potential is an essential concept introduced in the Freidlin-Wentzell theory, while the local quasi-potential within a basin of attraction of stable states is widely used  for computing the barrier of  exit problems for stochastic processes. We explain below the globally defined quasi-potential, which is now called  the  Ma\~n\'e potential following the convention in the weak KAM theory.

Given any starting point $x_0$, not necessarily critical points, the  Ma\~n\'e potential is defined as 
\begin{equation}\label{value}
 \begin{aligned}
 &v(y; \,  x_0):=\inf_{T\geq 0,\vg(0)= x_0,\,\, \vg(T)= y} \int_0^T \bbs{L(\dot{\vg}(t),\vg(t)) + c}\ud t.
 \end{aligned}
 \end{equation}
It is well known that the critical Ma\~n\'e value for the Lagrangian \eqref{LLL} is zero $c^*=0$. So from now on, we drop $c$ in the definition of the Ma\~n\'e potential.  As it is an essential concept, we provide  descriptions of four  characterizations for $c^*$ below.  
 
 (i) The  definition of the critical Ma\~n\'e value is, cf.  \cite{contreras1999global}
\begin{equation}\label{mane}
c^*= \sup\{c\in \bR; \, \exists \text{ closed curve }  x(\cdot) \text{ s.t. } \int_0^T (L(\dot{x}(t),x(t))+c) \ud t<0 \}.
\end{equation}
Since $L\geq 0$, so we know at least  $c^*\leq 0$. On the other hand,  if $c^*<0$, then one can choose a standing curve $ x(t)\equiv x_i$ at a steady state $x_i$  such that $\dot{x} = -U'(x)\equiv 0$.
 Then one have $L\equiv 0$ while $\int_0^T (L(\dot{x}(t),x(t))+c^*) \ud t<0$. Thus $c^*=0.$

(ii) From \cite[Definition 4.2.6]{fathi2008weak}, one can verify
\begin{equation}
c^*:= \inf\{c\in\bR; \, \text{ there exists } u\in \text{Lip}(\bS^1) \text{ s.t. }H(u'(x),x)\leq c \text{ a.e. }\}=0.
\end{equation}
 Indeed, on the one hand, taking $u\equiv 0$ implies $H(u'(x),x)=0$, so at least $c^*\leq 0$. On the other hand, 
since $H = (u' - \frac{U'}{2})^2 - (\frac{U'}{2})^2$, for any $u\in \text{Lip}(\bS^1)$, $H\geq 0$ at critical point $x_i$ of $U$. Thus $c^*$ can not be negative, so $c^*=0.$
 
  There are another two methods for characterizing $c^*:$ 
(iii) $c^*$ can also be  computed via the min-max problem, cf.   \cite[Theorem 4.1]{evans2008weak}: 
 $$c^*= - \inf_{\varphi\in C^1(\bS^1)} \max_{x\in \bS^1} \varphi'(x)(\varphi'(x) - U'(x)) = 0.$$
 (iv)  $c^*$ can be computed via  action minimizing (Mather) measures, cf.   \cite[Theorem 2.7]{evans2008weak}: Let $\mathcal{P}_{\text{inv}}( \bR \times \bS^1)$ be the collection of  probability measures $\mu$   on $\bR \times \bS^1$ that is invariant under the Lagrangian flow.  Then
 $$c^* = - \inf_{\mu \in \mathcal{P}_{\text{inv}} }  \int_{\bR \times \bS^1} L(s,x) \ud \mu(s,x)=0.$$
 See also \cite{mane1996generic} for a relaxed minimization which relaxes the invariant Lagrangian flow condition.
 The  measure achieving the minimum is called Mather measure. There are many Mather measures $\mu$   on $\bR \times \bS^1$ for our problem. For instance, we take $\mu = \delta(s)\delta(x-x_i)$, where $x_i$ is a steady state, and it is easy to verify the minimum $c^*=0$ is achieved.

\subsubsection{Characterization of Peierls barriers $h(y; x_i)$ on $\bS^1$}
 We point out that for the above case that the starting point $x_i$ is a stable/unstable critical point of $U(x)$,   another
 important concept, called
 the Peierls barrier is defined as
\begin{equation}\label{pb}
h(y; x_i) := \liminf_{T\to  +\8} \inf_{\vg(0)= x_i,\,\, \vg(T)= y} \int_0^T L(\dot{\vg}(t),\vg(t))\ud t.
\end{equation}
 
From the computations for left/right Ma\~n\'e values  in \eqref{quasi1} and \eqref{quasi2}, it is easy to see for $x_i$ being a critical point of $U(x)$, then  $v(y; x_i ) = h(y; x_i)$ for any $y\in \bS^1$. Thus from now on, we use   Peierls barrier $h(y; x_i)$ instead of $v(y; x_i)$ whenever the starting point is a critical point of $U(x)$. 

 Before we explain explicitly the global energy landscape $W^*(x)$ construction, we characterize the explicit formula for Peierls barriers $h(y; x_i)$ with the specific non-differential point, connection shape, and periodicity; see Figure \ref{fig_p_m}. This will also serve as a key observation for justifying the weak KAM solution later. In this paper, we always assume the orientation of $x\in \bS^1$ belongs to an interval $x\in (a,b)$ is counterclockwise.
   \begin{prop}\label{prop_h}
 Assume there are $k$ stable local minimums   of $U(x)$, interleaved by $k$ unstable local maximums indexed as \eqref{id1} and \eqref{id2}.   Then
 \begin{enumerate}[(i)]
\item The Peierls barriers $h(y; x_i)\geq 0$ is Lipschitz continuous and periodic.
\item There exists $x^*\in \bS^1$ such that $h(y;x_i)$ is   noninceasing in $(x^*, x_i)$ to zero and then nondecreasing in $(x_i, x^*+1)$ back to the same level $h(x^*; x_i)=h(x^*+1; x_i)$.
\item The only one possible non-differential point is the connection point $x^*$, where either an increasing function connected to a constant  or a constant connected to a decreasing function. That is to say, $h(y;x_i)$ is a $C^1$ function cut off at most once  by a constant from above. 
\item  $h(y; x_i)$ is the maximal Lipschitz continuous  viscosity solution to HJE
\begin{equation}\label{HJE_h}
 H(h'(y),y) = h'( h'-U' )=0, \quad y\in \bS^1
\end{equation}
and satisfies $h(x_i; x_i)=0.$
\end{enumerate}
   \end{prop}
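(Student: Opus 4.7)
The plan is to lift to the universal cover $\bR$ and decompose
\[
h(y; x_i) = \min\bigl( h_R(y; x_i),\, h_L(y; x_i) \bigr),
\]
where $h_R$ denotes the infimum over paths going right from $x_i$ to a representative $y \in [x_i, x_i+1]$ and $h_L$ the analogous infimum over left-winding paths. The key observation is that $L(s,x) = \tfrac14(s+U'(x))^2$ vanishes precisely on the downhill flow $\dot\gamma = -U'(\gamma)$, while the competing characteristic $\dot\gamma = +U'(\gamma)$ has cost $\int L\,dt = U(\gamma(T))-U(\gamma(0))$. Any optimal path therefore decomposes into free downhill segments alternating with uphill climbs whose cost equals the net gain in $U$; this yields the explicit piecewise formula in which $h_R(y;x_i)$ equals the accumulated climb cost from $x_i$ on descending pieces of $U$, and that same accumulated cost plus $U(y) - U(\text{previous local min})$ on ascending pieces.

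From this explicit formula, continuity and derivative matching at each critical point of $U$ are immediate (both sides produce slope $0$ at a max, where $U'=0$; symmetric check at a min), so $h_R \in C^1([x_i, x_i+1])$ with $h_R'(h_R'-U')=0$ pointwise and $h_R' \geq 0$; similarly $h_L \in C^1$ with $h_L' \leq 0$. Parts (i) and (ii) then follow at once: Lipschitz continuity with constant $\|U'\|_\infty$ and periodicity of $h$ are inherited from those of $h_R, h_L$; nonnegativity follows from $L\geq 0$ (with $h(x_i;x_i)=0$ via the stationary path); and the two monotone $C^1$ branches meet at a connection point $x^* \in \bS^1$, giving the advertised non-decreasing/non-increasing profile on either side of $x_i$.

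For (iii), the only potential non-differentiability is at $x^*$, since $h = h_R \in C^1$ on the open side where $h_R < h_L$ and $h = h_L \in C^1$ on the other side. At $x^*$, the HJE forces $h_R'(x^*), h_L'(x^*) \in \{0, U'(x^*)\}$, and the sign constraints $h_R' \geq 0$, $h_L' \leq 0$ cut down the possibilities. A case split on $\mathrm{sgn}(U'(x^*))$ then yields exactly the advertised shapes: if $U'(x^*)>0$ one finds $h_R' = U'(x^*)>0$ and $h_L'=0$, i.e.\ increasing meeting constant; if $U'(x^*)<0$ one finds $h_R'=0$ and $h_L' = U'(x^*)<0$, i.e.\ constant meeting decreasing; if $U'(x^*)=0$ both vanish and $h$ is $C^1$ across $x^*$. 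The ``increasing meeting decreasing'' V-shaped corner is ruled out because it would simultaneously require $U'(x^*)>0$ and $U'(x^*)<0$.

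For (iv), the viscosity property of the Peierls barrier is standard for value functions of this type; we verify it directly away from $x^*$ using the explicit $C^1$ formula, and at $x^*$ via the concave-corner structure, where only the subsolution inequality is at issue and holds because the one-sided slopes lie in $\{0,U'(x^*)\}$. For maximality, given any Lipschitz viscosity subsolution $u$ of $H(u'(y),y)=0$ with $u(x_i)=0$ and any path $\gamma:[0,T]\to \bS^1$ from $x_i$ to $y$, the Fenchel--Young inequality $ps \leq H(p,x)+L(s,x)$ together with $H(u'(\gamma),\gamma) \leq 0$ a.e.\ gives
\[
u(y) = \int_0^T u'(\gamma)\dot\gamma \, dt \leq \int_0^T L(\dot\gamma,\gamma)\, dt;
\]
infimizing over $(\gamma,T)$ yields $u(y) \leq h(y;x_i)$, proving $h(\cdot;x_i)$ is maximal. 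The main obstacle is the piecewise bookkeeping needed to obtain the explicit $h_R$ formula and, above all, the $C^1$ matching verification at the interior critical points of $U$; once in place, the corner classification at $x^*$ and the maximality argument are short.
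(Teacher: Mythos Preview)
Your proposal is correct and follows essentially the same route as the paper: decompose $h=\min(h_R,h_L)$ on the lift, compute $h_R,h_L$ explicitly from the uphill/downhill dichotomy of the Ma\~n\'e Lagrangian, verify $C^1$ regularity and monotonicity of each branch, locate the crossing point $x^*$, classify the corner there by the sign of $U'(x^*)$ (the paper phrases this equivalently by which subinterval $[x_{\ell-\frac12},x_\ell]$ or $[x_\ell,x_{\ell+\frac12}]$ contains $x^*$), and prove maximality via the Fenchel--Young inequality against an arbitrary Lipschitz subsolution. One small remark: your final step ``infimizing over $(\gamma,T)$'' literally yields $u(y)\le v(y;x_i)$ (the Ma\~n\'e potential) rather than $h(y;x_i)$; the paper instead takes $\liminf_{T\to\infty}$ to land directly on $h$, though of course $v=h$ here since $x_i$ is a critical point.
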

   This proposition on the characterization of Peierls barriers is basically known in the weak KAM theory but here we give detailed properties on the periodicity and explicit shape of $h(y;x_i)$.
   \begin{proof}

  First,    we define a  right barrier function for $y\in [x_i, x_{i+k}]\subset \bR$ 
  \begin{equation}
 h_R(y; x_i):= \inf_{T\geq 0,\vg(0)=x_i,\,\, \vg(T)=y} \int_0^T\frac14| \dot{\vg} + U'(\vg)|^2\ud t
  \end{equation}
   for an  exit problem starting from $x_i$ to the right until the point $x_{i+k}$ passing through several local minimums $x_{i+1}, \cdots, x_{i+k-1}\in \bR$. To see explicitly the formula for the barrier function, from each stable minimums $x_i$ to its adjacent critical points, we can first compute  
\begin{equation}\label{quasi1}
 \begin{aligned}
 h_R(x_{i+ \frac12}; \, x_i)=&\inf_{T\geq 0,\vg(0)=x_i,\,\, \vg(T)=x_{i+\frac12}} \int_0^T\frac14| \dot{\vg} + U'(\vg)|^2\ud t \\
 =&\inf_{T\geq 0,\vg(0)=x_i,\,\, \vg(T)=x_{i+\frac12}}\int_0^T\bbs{\frac14| \dot{\vg} - U'(\vg)|^2 + \dot{\gamma} U' } \ud t \geq  U(x_{i+\frac12}) - U(x_i).
 \end{aligned}
 \end{equation}
 Here the equality holds if and only if $ \dot{\vg} =U'(\vg)$ and $\gamma(+\8)=x_{i+ \frac12}$, so $h_R(x_{i+ \frac12}; \, x_i) =U(x_{i+\frac12}) - U(x_i)$. It is usually referred as the `uphill' path from $x_i$ to $x_{i+\frac12}$; cf.  \cite{FW}. Similarly, the left barrier from $x_i$ to the left to $x_{i-\frac12}$ is
 \begin{equation}
 h_L(x_{i-\frac12}; x_i):= \inf_{T\geq 0,\vg(0)=x_i,\,\, \vg(T)=x_{i-\frac12}} \int_0^T\frac14| \dot{\vg} + U'(\vg)|^2\ud t =  U(x_{i-\frac12}) - U(x_i).
\end{equation} 
Conversely, for    the `downhill' path starting from $x_{i+\frac12}$ along $ \dot{\vg} =-U'(\vg)$ and $\gamma(+\8)=x_{i+1}$, we have
 \begin{equation}\label{quasi2}
 \begin{aligned}
 h_R( x_{i+1}; x_{i+ \frac12})=&\inf_{T\geq 0,\vg(0)=x_{i+\frac12},\,\, \vg(T)=x_{i+1}} \int_0^T\frac14| \dot{\vg} + U'(\vg)|^2\ud t = 0.
 \end{aligned}
 \end{equation}
 Thus 
\begin{equation}\label{quasi}
 \begin{aligned}
 h_R(x_{i+ 1}; \, x_i)=h_R(x_{i+ \frac12}; \, x_i)= U(x_{i+\frac12}) - U(x_i);\\
 h_L(x_{i- 1}; \, x_i)=h_L(x_{i- \frac12}; \, x_i)= U(x_{i-\frac12}) - U(x_i).
 \end{aligned}
 \end{equation}
 Other barriers passing through multiple wells can be computed  repeatedly.
     
     Thus   the barrier formula $h_R$ for this one dimensional least action problem (multiple exit  problems) is given by   a  least action problem for piecewisely $C^1$ curve connecting $x_i$ to $y\in [x_i, x_{i+k}]$
\begin{equation}\label{hr}
\begin{aligned}
h_R(y; x_i) = \left\{ \begin{array}{ccc}
U(y) -  U(x_i), & y\in [x_{i}, \, x_{i+\frac12}], &\text{increase}; \\
U(x_{i+\frac12}) - U(x_i), & y \in [x_{i+\frac12}, x_{i+1}], &\text{constant};\\
U(x_{i+\frac12}) - U(x_i) + U(y)-U(x_{i+1}), & y\in [x_{i+1}, \, x_{i+\frac32}], &\text{increase};\\
\cdots\\
\sum_{j=i}^{i+k-1} [ U(x_{j+\frac12}) - U(x_j) ], & y\in [x_{i+k-\frac12}, \, x_{i+k}], &\text{constant}.
\end{array}
 \right.
\end{aligned}
\end{equation}  
We emphasize $U(x)$ is skew periodic function defined on the whole $\bR$, so $h_R(y; x_i)$ is well-defined.  
It's easy to see $h_R(y; x_i)$ is a nondecreasing $C^1([x_i, x_{i+k}])$ function.  


Similarly, a nonincreasing $C^1([x_{i-k}, x_i])$ function $h_L(y; x_i), y\in [x_{i-k}, x_i]$ can be computed to serve as a   left  barrier function for the  exit problem starting from $x_i$ to the left until the point $x_{i-k}$
 \begin{equation}\label{hl}
\begin{aligned}
h_L(y; x_i) = \left\{ \begin{array}{ccc}
\sum_{j=i-k+1}^{i} [ U(x_{j-\frac12}) - U(x_j) ], & y\in [x_{i-k }, \, x_{i-k+\frac12}], &\text{constant};\\
\cdots\\
U(x_{i-\frac12}) - U(x_i) + U(y)-U(x_{i-1}), & y\in [x_{i-\frac32}, \, x_{i-1}], &\text{decrease};\\
U(x_{i-\frac12}) - U(x_i), & y \in [x_{i-1}, x_{i-\frac12}], &\text{constant};\\
 U(y) -  U(x_i), & y\in [x_{i-\frac12}, \, x_{i}], &\text{decrease}.
\end{array}
 \right.
\end{aligned}
\end{equation} 
For other points, due to screw periodicity of $U(x)$, one can naturally define
$$h_L(y\pm 1,x_{i\pm k}) = h_L(y, x_i),\quad h_R(y\pm 1, x_{i\pm k}) = h_R(y,x_{i}).$$
See Figure \ref{fig_hLR} for the illustration of the left barrier $h_L(y;x_i)$ and the right barrier $h_R(y;x_i).$ 
 \begin{figure}
 \includegraphics[scale=0.36]{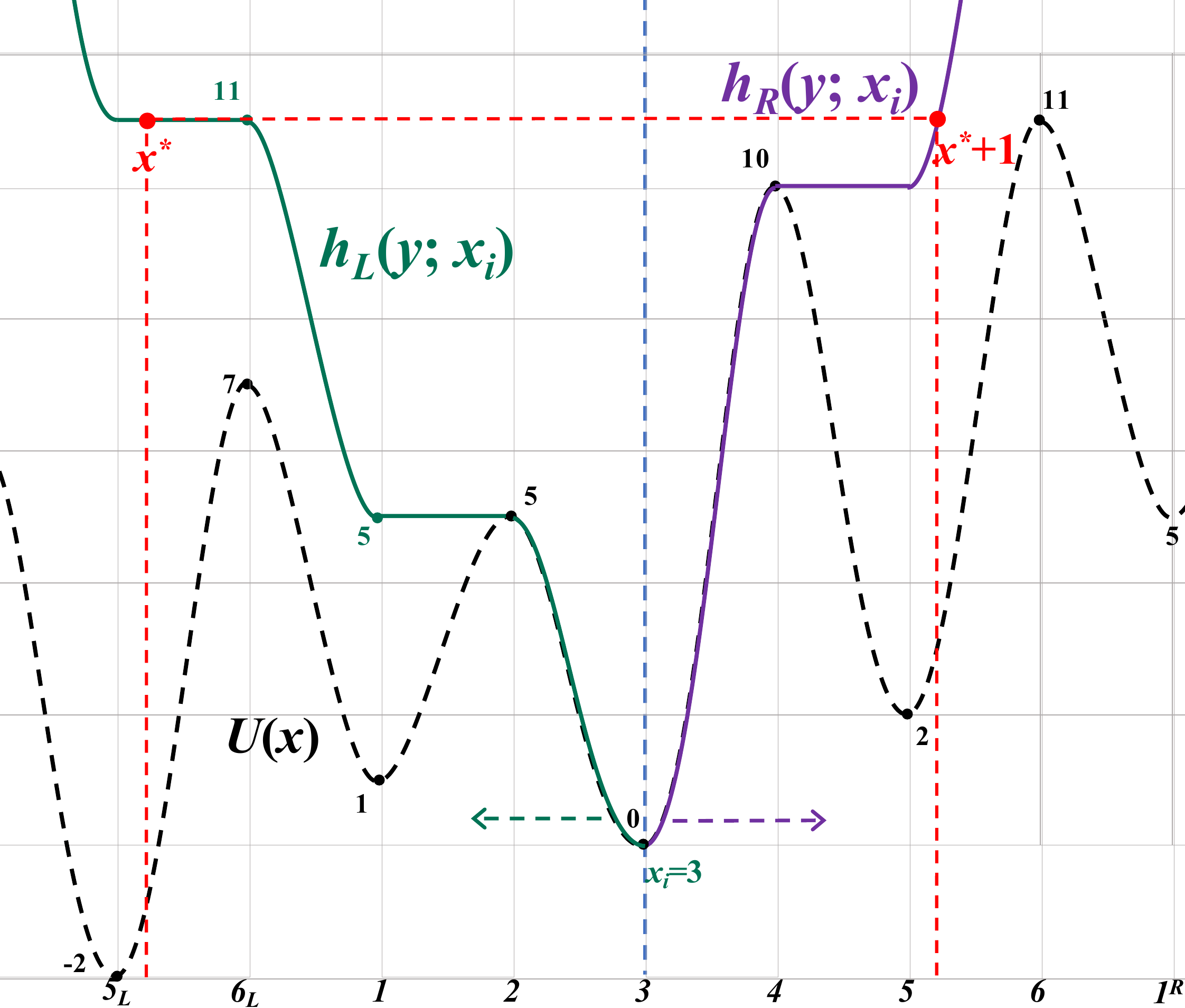}
 \caption{The construction of the   Peierls barrier $h(y;x_i)$ via the left barrier $h_L(y;x_i)$ and right barrier $h_R(y;x_i)$.  The  dashed black line is the skew periodic potential $U(x)$ with three local minimums $1,0,2$ in $\bS^1=[6_L,6]$. The solid green  line is the left barrier starting from $x_i=3$ and is nondecreasing clockwise.  The solid purple  line is the right barrier starting from $x_i=3$ and is nondecreasing counterclockwise. After finding $x^*$ such that $h_L(x^*; x_i) = h_R(x^*+1; x_i)$, $h(y;x_i)$ is   periodic, Lipschitz continuous with only one cut off by a constant from the above at the non-differential point $x^*$.  } \label{fig_hLR}
 \end{figure}
   
  Second, since $U'(x)$ is periodic, for $y\in \bS^1$, we compute the Peierls barrier
  \begin{equation}\label{hmin}
  h(y; x_i):= \inf_{T\geq 0,\vg(0)=x_i,\,\, \vg(T)=y } \int_0^T\frac14| \dot{\vg} + U'(\vg)|^2\ud t.
  \end{equation}
  Using $h_R, h_L$, we represent $h(y; x_i)$, $y\in[0,1]$ as
  \begin{equation}\label{hlrmin}
  h(y; x_i) = \left\{ 
\begin{array}{cc}
\min \{ h_L(y; x_i), \, h_R(y+1; x_{i})\},\, &y\leq x_i,\\
\min \{ h_L(y-1; x_{i}), \, h_R(y; x_i)\},\, &y>x_i,
\end{array}   
  \right.
  \end{equation}
 which will be proved below to be a periodic, Lipschitz  continuous function.
   We first give a key oberservation, which will be used in the characterization of the shape of   the global energy landscape as well.   Notice $[x_{i}, \, x_{i+\frac12}] = [x_{i-k }, \, x_{i-k+\frac12}]+1$ is on the increasing interval of the $i$-th well $U(x)$, where $h_R$ is increasing and $h_L$ is a constant. Similarly, $[x_{i-\frac12}, \, x_{i}] = [x_{i+k-\frac12}, \, x_{i+k}]-1$ is on the decreasing interval of the $i$-th well $U(x)$, where $h_R$ is a constant and $h_L$ is decreasing. 
Below, we proceed to characterize $h(y;x_i)$.
  
  Step 1.  Since $h_R(y;x_i)$ is nondecreasing and $h_L(y;x_i)$ is nonincreasing, there always exists 
$x^*$ such that $h_L(x^*; x_i) = h_R(x^*+1; x_i)$.
 Therefore, for $x^*\leq y \leq x_i$, the minimum \eqref{hlrmin} is attained at $h_L(x)$ while for $x_i\leq y \leq x^*+1$, the minimum \eqref{hlrmin} is attained at $h_R(x)$. Thus the Peierls barriers is given by
  \begin{equation}\label{pblr}
  h(y; x_i)=\left\{ \begin{array}{cc}
   h_L(y; x_i), &  x^*\leq y \leq x_i,\\
   h_R(y; x_i), & x_i\leq y \leq x^*+1.
   \end{array}
   \right.
  \end{equation}
Immediate consequences are that $h(y;x_i)$ is $C^1$ function in $(x^*, x^*+1)$, nonincreasing in $(x^*, x_i)$ to zero, and nondecreasing in $(x_i, x^*+1)$ back to the same level  
$$h(x^*;x_i) = h(x^* + 1 ; x_i).$$
 Thus $h(y;x_i)$ has continuously periodic extension and the only possible non-differential point is $x^*+ \bZ$; see Figure \ref{fig_hLR} for the construction of $h(y;x_i)$ via $h_L(y;x_i), h_R(y;x_i).$ Hence we obtained the conclusion (i) and (ii).
  
  Step 2. We prove the type of  the  non-differentiablity for  point $x^*$.
  
  First, there exists $i-k\leq \ell\leq i$ such that $x^* \in [x_{\ell-\frac12}, x_{\ell+\frac12}]$. We only need to consider three cases. Case (1), $x^* = x_{\ell}$ or $x_{\ell\pm\frac12}$, then $h'(x^*;x_i) = h'_L(x^*)=h'_R(x^*)=0$ is differentiable. Case (2), if $x^* \in (x_{\ell}, x_{\ell+\frac12})$, then  from the formula \eqref{pblr}, we know $h'(x^*_+; x_i) = h_L'(x^*_+; x_i)=0$ while $h'(x^*_-; x_i) = h_R'(x^*_-; x_i)=U'(x^*)>0$. This case implies that an increasing function is connected to a constant at the non-differential point $x^*$. Case (3), if $x^* \in (x_{\ell-\frac12}, x_{\ell})$, then the left derivative $h'(x^*_-; x_i)=0$  while the right derivative   exists and is negative $h'(x^*_+; x_i)=U'(x^*)<0$. This implies that a constant is connected to a decreasing function at the non-differential point $x^*$. Therefore, we obtained the conclusion (iii).

 Step 3. One can directly verify $h(y;x_i)$ is a viscosity solution based on the   definition cf. \cite[Page 5]{bardi1997optimal}.
The maximality of $h(y;x_i)$ follows \cite[Theorem 2.41]{Tran21} or \cite[Theorem 2.4]{fathi2009hausdorff} only with small modifications. 
Let $\tilde{u}(y)$ be a Lipschitz continuous viscosity subsolution to \eqref{HJE_h} satisfying $\tilde{u}(x_i)=0$ and thus   an almost everywhere subsolution satisfying $H( \nabla\tilde{u}(x), x) \leq 0$ a.e. $x\in \bS^1$. So for any absolutely continuous curve $\gamma(\cdot)$ with $\vg(0) = x_i$ and $\vg(t)=y\in \bS^1$, we have
\begin{equation*}
\begin{aligned}
\tilde{u}(y) - \tilde{u}(x_i) =& \int_0^t \nabla \tilde{u}(\vg(s)) \cdot \dot{\vg}(s) \ud s\\
 \leq &  \int_0^t \bbs{L(\dot{\vg}(s), \vg(s)) + H(\nabla \tilde{u}(\vg(s)),\vg(s))  } \ud s \leq \int_0^t  L(\dot{\vg}(s), \vg(s)) \ud s.
 \end{aligned}
\end{equation*}
Then taking infimum w.r.t. $\vg$ and $\liminf$ w.r.t. $t$, we obtain
\begin{equation}
\tilde{u}(y)  \leq  \liminf_{t\to +\8} \inf_{\vg; \vg(0)=x_i, \vg(t)=y} \int_0^t  L(\dot{\vg}(s), \vg(s)) \ud s = h(y; x_i).
\end{equation}
Thus   the  Peierls barrier $h(y;x_i)$    is   the maximal Lipschitz continuous viscosity solution satisfying \eqref{HJE_h}.
   \end{proof}
   
   \begin{rem}\label{rem_h0}
   The shape of the  Peierls barrier $h(y;x_{i+\frac12})$ starting from the local maximums $x_{i+\frac12}$ can be characterized with the same arguments. The only difference is $h(y;x_{i+\frac12})=0$ for $y\in[x_i, x_{i+1}].$ Then outside $[x_i, x_{i+1}]$, one can use $h_R(y; x_{i+1})$ and $h_L(y; x_i)$ to construct $h(y;x_{i+\frac12}).$
   \end{rem}

 \subsection{Freidlin-Wentzell's variational construction for the rate function $W(x)$ via boundary values $W_i$ at stable states and Peierls barriers $h(y;x_i)$}\label{sec3_FW}
 Based on the previous explicit characterization of Peierls barriers starting from each stable states, in this subsection, we describe and give an alternative proof for  Freidlin-Wentzell's variational formula for determining the boundary values. Those boundary values are globally defined and are the most crucial ingredient to obtain the unique, Lipschitz continuous, periodic global energy landscape that can correctly represent the exponentially small probability in the large deviation principle. After obtaining the global adjustment of boundary values, the variational construction for the rate function  $W^*(x)$ is indeed a local trimming procedure; see Section \ref{sec3_localR} for the local representation of $W(x)$.
   
 \subsubsection{Determine boundary values $W_j$ on stable states}
 Now we determine the boundary values $W(x_i)$ at stable minimum $x_i$.
 For any $j=1,\cdots, k$, recall $h_R(x_i; x_{j+1})$ defined in \eqref{hr}.  To compute a counterclockwise path connecting 
 $x_{j+1}\in \bS^1$ to $x_i\in \bS^1$, we introduce a tilde notation for the total cost of  this   path on $\bS^1$ 
 \begin{equation}
\tilde{h}_R(x_i; x_{j+1}):= \left\{
\begin{array}{cc}
 h_R(x_i; x_{j+1}), & j < i,\\
 h_R(x_{i+k}; x_{j+1}), & j\geq i.
\end{array}
\right. 
 \end{equation} 
Similarly, using $h_L(x_i; x_{j})$ defined in \eqref{hl},    the total cost for a clockwise  path connecting 
 $x_{j}\in \bS^1$ to $x_i\in\bS^1$ is
 \begin{equation}
 \tilde{h}_L(x_i; x_{j}) := \left\{
\begin{array}{cc}
 h_L(x_i; x_{j}), & j \geq i,\\
 h_L(x_{i-k}; x_{j}), & j< i.
\end{array}
\right.
 \end{equation}

 Then following \cite[Chapter 6, eq. (4.2)]{FW},  define
\begin{equation}\label{wi}
W_i := \min_{j=1,\cdots,k} (\tilde{h}_R(x_i; x_{j+1}) \, + \,\tilde{h}_L(x_i; x_{j})), \quad i=1, \cdots, k.
 \end{equation} 
 We refer to the example in Figure \ref{fig_w000} (left) for a globally adjusted boundary data $W_i$, $i=1,2,3$ satisfying \eqref{wi} and the construction of $W^*(x)$ based on those boundary data. With these specially adjusted boundary data, $W^*(x)$ is proved to be the rate function of the large deviation principle for invariant measures \eqref{LD}, \cite[Chapter 6, Theorem 4.3]{FW}. We also provide a coarse grained Markov chain interpretation  in Appendix \ref{app2}.
 
 Notice all $x_i$ are stable critical points, so   the explicit formula for the Peierls barrier $h(x_i; x_j)$ in \eqref{hlrmin} is recast as
 \begin{equation}
 h(x_i; x_j)=\min \{ \tilde{h}_R(x_i; x_{j}) \, , \,\tilde{h}_L(x_i; x_{j}) \}, \quad i, j=1, \cdots, k.
 \end{equation}
 In the following lemma, we prove the boundary data satisfying the variational formula \eqref{wi} is indeed a consistent data set satisfying the discrete weak KAM problem \eqref{disW}.

 \begin{lem}\label{lem:disW}
 Let $h(x_i; x_j)$ be the Peierls barrier. 
 The values of $W_i$, $i=1, \cdots, k$ defined in \eqref{wi} solves the discrete weak KAM problem
 \begin{equation}\label{disW}
 W_i = \min_{j=1,\cdots, k} \{ W_j + h(x_i; x_j)\}, \quad \forall i = 1, \cdots, k.
 \end{equation}

 \end{lem}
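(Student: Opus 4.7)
The direction $W_i \geq \min_j \{W_j + h(x_i; x_j)\}$ is immediate: take $j = i$ and use $h(x_i; x_i) = 0$, a consequence of Proposition \ref{prop_h} (the constant curve $\gamma \equiv x_i$ has zero action).

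For the reverse inequality $W_i \leq W_j + h(x_i; x_j)$ for every $j$, the plan is to produce a cut index $n$ in the definition \eqref{wi} of $W_i$ whose corresponding trial value dominates $W_j + h(x_i; x_j)$; the minimality of $W_i$ in \eqref{wi} then closes the argument. Fix $j$ and let $m^* \in \{1, \dots, k\}$ attain the minimum in \eqref{wi} for $W_j$, so that $W_j = \tilde{h}_R(x_j; x_{m^*+1}) + \tilde{h}_L(x_j; x_{m^*})$, and use $h(x_i; x_j) = \min\{\tilde{h}_R(x_i; x_j),\tilde{h}_L(x_i; x_j)\}$ from Proposition \ref{prop_h}. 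The analysis rests on two consequences of the explicit formulas \eqref{hr}--\eqref{hl}: additivity of $\tilde{h}_R$ (resp.\ $\tilde{h}_L$) along concatenated counterclockwise (resp.\ clockwise) arcs, and the full-loop identities obtained by summing the exit uphills $\sum_{p=1}^{k}[U(x_{p+1/2}) - U(x_p)]$ (counterclockwise) or $\sum_{p=1}^{k}[U(x_{p-1/2}) - U(x_p)]$ (clockwise) over one period.

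The cases split according to whether the direction achieving $h(x_i; x_j)$ traces an arc from $x_j$ to $x_i$ that crosses the cut $[x_{m^*}, x_{m^*+1}]$ used for $W_j$. In the aligned case (no crossing), I choose $n = m^*$; additivity rewrites $W_j + h(x_i; x_j)$ as the trial value $\tilde{h}_R(x_i; x_{m^*+1}) + \tilde{h}_L(x_i; x_{m^*})$ for \eqref{wi} plus a non-negative remainder equal to the segment from $x_j$ to $x_i$ on the opposite arc, and the bound follows. In the wrapped case, the naive concatenation picks up a full counterclockwise or clockwise loop contribution. I then take the cut adjacent to $x_i$, namely $n = i$ when $h(x_i; x_j) = \tilde{h}_R(x_i; x_j)$ and $n = i - 1$ when $h(x_i; x_j) = \tilde{h}_L(x_i; x_j)$; the resulting trial value equals the full loop cost minus the single strictly positive uphill $U(x_{i \pm 1/2}) - U(x_i)$, while the wrapped right-hand side $W_j + h(x_i; x_j)$ is at least the same full loop plus additional non-negative terms.

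The principal technical obstacle is the wrapped case: the $W_j$-minimizing cut and the direction attaining $h(x_i; x_j)$ need not agree, so the naive choice $n = m^*$ no longer suffices. The loop identity is essential for repackaging the wrapped concatenation into a full counterclockwise or clockwise loop plus a non-negative remainder, after which the inequality reduces to the positivity $U(x_{i \pm 1/2}) > U(x_i)$, i.e.\ to the fact that $x_{i \pm 1/2}$ are local maxima and $x_i$ is a local minimum.
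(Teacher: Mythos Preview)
Your proof is correct and follows essentially the same approach as the paper: the same case split (your aligned/wrapped cases correspond exactly to the paper's cases (i)/(ii)) with the same choices of trial cut index ($n=m^*$ in the aligned case, $n=i-1$ or $n=i$ in the wrapped case, depending on whether $h(x_i;x_j)=\tilde{h}_L$ or $\tilde{h}_R$). The only cosmetic difference is that the paper proves the inequality $\tilde{h}_R(x_i;x_{m+1})+\tilde{h}_L(x_i;x_m)\le \tilde{h}_R(x_j;x_{\ell+1})+\tilde{h}_L(x_j;x_\ell)+h(x_i;x_j)$ for \emph{every} index $\ell$ in the definition of $W_j$ and then minimizes over $\ell$, whereas you fix the optimizer $m^*$ first; in the wrapped case the paper also bypasses your loop-identity computation by directly noting $\tilde{h}_R(x_i;x_i)=0$ and $\tilde{h}_L(x_i;x_{i-1})\le \tilde{h}_L(x_j;x_\ell)+\tilde{h}_L(x_i;x_j)$, which is a slightly cleaner bookkeeping of the same inequality.
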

 \begin{proof}
To verify \eqref{disW}, it is sufficient to verify for any $\ell$, there exists $m$ such that
\begin{equation} \label{4.7}
  W_i\leq  \tilde{h}_R(x_i; x_{m+1}) \, + \,\tilde{h}_L(x_i; x_{m}) \leq \tilde{h}_R(x_j; x_{\ell+1}) \, + \,\tilde{h}_L(x_j; x_{\ell}) + h(x_i, x_j).
 \end{equation}
 Indeed, taking minimum in $\ell$, we have $W_i \leq W_j  + h(x_i, x_j)$
  and then taking minimum in $j$, we have $W_i\leq \min_{j=1,\cdots, k} \{ W_j + h(x_i; x_j)\}.$ Particularly, the equality holds for $j=i$.
 
 Now we prove \eqref{4.7} for the case $h(x_i; x_j)=\min \{ \tilde{h}_R(x_i; x_{j}) \, , \,\tilde{h}_L(x_i; x_{j}) \} = \tilde{h}_L(x_i; x_{j})$, and the other one has the same argument.\\
 (i) If $\ell \in \{ j, j+1, \cdots, i - 1 \}$ is on the counterclockwise path from $x_j$ to $x_i$, then taking $m = \ell$, we obtain 
 \begin{equation} 
\tilde{h}_R(x_i; x_{\ell+1}) \, + \,\tilde{h}_L(x_i; x_{\ell}) \leq \tilde{h}_R(x_j; x_{\ell+1}) \, + \,\tilde{h}_L(x_j; x_{\ell}) \,+ \tilde{h}_L(x_i; x_{j})
 \end{equation}
 due to $\tilde{h}_R(x_i; x_{\ell+1}) \leq \tilde{h}_R(x_j; x_{\ell+1}) $ and $\tilde{h}_L(x_i; x_{\ell}) \leq \tilde{h}_L(x_j; x_{\ell}) + \tilde{h}_L(x_i; x_{j}).$
 \\(ii) If $\ell \in \{ i, i+1, \cdots, j-1\}$ is on the clockwise path from $x_j$ to $x_i$, then taking $ m = i -1$, since $\tilde{h}_R(x_i; x_{i})=0$ and $ \tilde{h}_L(x_i; x_{i-1}) \leq  \tilde{h}_L(x_j; x_{\ell}) + \tilde{h}_L(x_i; x_{j}),$ we have
  \begin{equation} 
\tilde{h}_R(x_i; x_{i}) \, + \,\tilde{h}_L(x_i; x_{i-1}) \leq \tilde{h}_R(x_j; x_{\ell+1}) \, + \,\tilde{h}_L(x_j; x_{\ell}) \,+ \tilde{h}_L(x_i; x_{j}).  
 \end{equation}
 Thus \eqref{4.7} is proved, so does the lemma.
 \end{proof}
 
  \subsubsection{Variational construction for $W^*(x)$ via boundary values $W_i$ on stable states $x_i$}
 With the above boundary values $W_i$, $i=1, \cdots, k$ on all the stable minima, the global energy landscape is defined as \cite[Chapter 6, Theorem 4.3]{FW}
\begin{equation}\label{FW_W}
 W(x) = \min_{j=1,\cdots, k} \{ W_j + h(x; x_j)\}, \quad \forall x\in \bS^1.
 \end{equation}
 Later in Section \ref{sec4}, we will prove that $W(x)$ is indeed a weak KAM solution to the HJE \eqref{HJE_w}. We also characterize the corresponding projected Aubry set $\mathcal{A}$ in Section \ref{sec4_Aubry}. After including the induced boundary values on   other critical point (local maximums) in $\mathcal{A}$, this $W(x)$ satisfies the usual representation (cf. \cite[Theorem 7.4]{Tran21}) via the boundary data on the projected Aubry set for the weak KAM solution; see Lemma \ref{lem:Wmm}.

 We remark the boundary values $W_i$ to the discrete weak KAM problem \eqref{disW} are not uniquely determined because $W_i=0,\, i=1, \cdots, k$ are also admissible boundary values satisfying \eqref{disW}; see Figure \ref{fig_w000} (right) for instance. Meanwhile, a constant shift of $W_i$ is also a solution to \eqref{disW}. We refer to Section \ref{sec4_nonunique} for examples of non-uniqueness.
 
 However, the  construction described above using the uniquely determined boundary data $W_i$ and the trimming of $W_i+h(x;x_i)$ has clear probability meaning via the large deviation principle for the invariant measure $\pi_\eps(x)$. From \cite[Chapter 6, Theorem 4.3]{FW},
  \begin{equation}\label{rate}
 W^*(x) = W(x)-\min_i W(x_i)
  \end{equation}
   gives the rate function in the large deviation principle for the invariant measure $\pi_\eps(x)$ to the Langevin dynamics \eqref{LV} on $\bS^1.$ 
   In Section \ref{sec4}, we will explore more properties of $W(x)$ from the weak KAM viewpoint and use the corresponding projected Aubry/Mather set to give a probability interpretation of the global energy landscape $W^*(x)$.

  \subsubsection{Local representation for $W(x)$}\label{sec3_localR}
  Based on the globally adjusted boundary values $W_j$, the rate function $W(x)$ can be constructed in \eqref{FW_W}. In the following proposition, we show that the variational formula \eqref{FW_W}   indeed has a local representation depending only on the boundary values of the adjacent local minima and barrier functions. This procedure is thus also referred as a local trimming procedure. We refer to Figure \ref{fig_w000} for an illustration of the local trimming.

  \begin{prop}\label{prop_local}
  Let $W(x)$ be given by \eqref{FW_W} with boundary values $W_j$, $j=1, \cdots, k.$ Assume the boundary values $W_j$ satisfy the discrete weak KAM problem \eqref{disW}. Then $W(x)$ has a local representation that, for any $x\in [x_i, x_{i+1}]$ for some $i=1, \cdots,k$
  \begin{equation}\label{localW}
  W(x) = \min\{ W_i + h_R(x;x_i) , \, W_{i+1} + h_L(x; x_{i+1}) \},
  \end{equation}
  where $h_R(x; x_i)$ and $h_L(x; x_{i+1})$ is the locally defined, right/left barrier  functions in \eqref{hr} and \eqref{hl}. Consequently, at each local maximums $x_{i+\frac12}$, there is at most one flat connection either on the left of $x_{i+\frac12}$ or on the right of $x_{i+\frac12}.$
  \end{prop}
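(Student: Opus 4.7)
The plan is to reduce the global minimum in the construction $W(x)=\min_{j=1,\ldots,k}\{W_j+h(x;x_j)\}$ from \eqref{FW_W} to the two adjacent contributions $j=i$ and $j=i+1$. The two key ingredients are (a) the explicit shape of the Peierls barrier in Proposition \ref{prop_h}, especially an additivity property of the one-sided barriers $h_R,\,h_L$ along the skew-periodic real line, and (b) the discrete weak KAM identity \eqref{disW} satisfied by the globally adjusted boundary data $W_1,\ldots,W_k$.

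For the upper bound, I restrict the minimum in \eqref{FW_W} to $j\in\{i,i+1\}$. Since $h(x;x_i)\le h_R(x;x_i)$ and $h(x;x_{i+1})\le h_L(x;x_{i+1})$ on $[x_i,x_{i+1}]$ (both immediate from the min structure \eqref{hlrmin}), this yields $W(x)\le\min\{W_i+h_R(x;x_i),\,W_{i+1}+h_L(x;x_{i+1})\}$. For the matching lower bound, fix any $j$ and use the telescoping formulas \eqref{hr}, \eqref{hl} to decompose the one-sided barriers at the intermediate wells $x_i$ and $x_{i+1}$:
$$\tilde h_R(x;x_j)=\tilde h_R(x_i;x_j)+h_R(x;x_i),\qquad \tilde h_L(x;x_j)=\tilde h_L(x_{i+1};x_j)+h_L(x;x_{i+1}),$$
which also covers $j\in\{i,i+1\}$ once the trivial arc is interpreted correctly. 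Hence $h(x;x_j)=\min\{\tilde h_R(x_i;x_j)+h_R(x;x_i),\,\tilde h_L(x_{i+1};x_j)+h_L(x;x_{i+1})\}$. Because $\tilde h_R(x_i;x_j)\ge h(x_i;x_j)$ and $\tilde h_L(x_{i+1};x_j)\ge h(x_{i+1};x_j)$, the discrete weak KAM identity \eqref{disW} applied at $x_i$ and $x_{i+1}$ gives $W_j+\tilde h_R(x_i;x_j)\ge W_i$ and $W_j+\tilde h_L(x_{i+1};x_j)\ge W_{i+1}$. Therefore $W_j+h(x;x_j)\ge\min\{W_i+h_R(x;x_i),\,W_{i+1}+h_L(x;x_{i+1})\}$ for every $j$, and minimizing in $j$ matches the upper bound and establishes \eqref{localW}.

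The consequence on flat connections follows by a short case analysis on $[x_i,x_{i+1}]$ using Proposition \ref{prop_h}(ii)(iii): the branch $W_i+h_R(\,\cdot\,;x_i)$ is strictly increasing on $[x_i,x_{i+\frac12}]$ and constant on $[x_{i+\frac12},x_{i+1}]$, while $W_{i+1}+h_L(\,\cdot\,;x_{i+1})$ is constant on $[x_i,x_{i+\frac12}]$ and strictly decreasing on $[x_{i+\frac12},x_{i+1}]$. Taking the lower envelope of these two profiles, either the switch between branches occurs on the left of $x_{i+\frac12}$ (producing a single flat piece to the left of $x_{i+\frac12}$ coming from $W_{i+1}+h_L$), on the right of $x_{i+\frac12}$ (producing a single flat piece to the right coming from $W_i+h_R$), or exactly at $x_{i+\frac12}$ (no flat piece). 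In all cases there is at most one flat interval adjoining each local maximum $x_{i+\frac12}$, on exactly one side.

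The main technical point will be the additivity decomposition of $\tilde h_R$ and $\tilde h_L$, which requires tracking the skew-periodic bookkeeping in \eqref{hr}, \eqref{hl} when $x_j$ and $x$ sit on opposite arcs of $\bS^1$; once additivity is in hand, the rest is a clean application of \eqref{disW} and the explicit shape of the Peierls barriers.
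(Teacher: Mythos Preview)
Your proposal is correct and follows essentially the same route as the paper's own proof: both arguments obtain the lower bound by decomposing the one-sided barrier through the adjacent minimum (your additivity $\tilde h_R(x;x_j)=\tilde h_R(x_i;x_j)+h_R(x;x_i)$ and its left analogue are exactly the paper's Cases (1)--(2)), then invoke the discrete weak KAM inequality \eqref{disW} to replace $W_j+\tilde h_R(x_i;x_j)$ by $W_i$ (resp.\ $W_{i+1}$); the upper bound via $h\le h_R,\,h_L$ and restriction to $j\in\{i,i+1\}$ is likewise the same as the paper's final chain of inequalities. Your treatment of the flat-connection consequence also matches the paper's monotonicity argument on $(x_i,x_{i+\frac12})$ and $(x_{i+\frac12},x_{i+1})$.
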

  \begin{proof}
  Assume $W(x)$ defined in \eqref{FW_W} is achieved at $j$, i.e.,
  $$W(x) = \min_{j=1,\cdots, k} \{ W_j + h(x; x_j)\} =    W_j + h(x; x_j) . $$
  
  Case (1), if $h(x;x_j)$ is achieved via clockwise path, then
  \begin{equation}
  W_j + h(x; x_j) = W_j + h(x_{i+1}, x_i) + h_L(x, x_{i+1}) \geq W_j  + h_L(x, x_{i+1}).
\end{equation}  
Case (2), if $h(x; x_j)$ is achieved via counterclockwise path, then
  \begin{equation}
  W_j + h(x; x_j) = W_j + h(x_{i}, x_i) + h_L(x, x_{i}) \geq W_j  + h_R(x, x_{i}).
\end{equation}

Therefore, combining both cases, we have
\begin{equation}
W_j + h(x; x_j) \geq \min\{W_i + h_R(x;x_i) , \, W_{i+1} + h_L(x; x_{i+1})\}. 
\end{equation}
From \eqref{pblr}, we further know
\begin{equation}
\begin{aligned}
W(x)=W_j + h(x; x_j) & \geq \min\{W_i + h_R(x;x_i) , \, W_{i+1} + h_L(x; x_{i+1})\} \\
&\geq \min\{W_i + h(x;x_i) , \, W_{i+1} + h(x; x_{i+1})\} \geq \min_{j=1,\cdots, k} \{ W_j + h(x; x_j)\}=W(x).
\end{aligned}
\end{equation}
This gives \eqref{localW}.

At last, notice in $(x_i, x_{i+\frac12})$, $h_R(x;x_i)$ is strictly increasing while $h_L(x;x_{i+1})$ is constant; likewise in $(x_{i+\frac12}, x_{i})$ with $h_L(x;x_{i+1})$ being strictly decreasing and $h_R(x;x_i)$ being a constant. Thus the curves meet at most once and hence there is at most one flat connection either on the left of $x_{i+\frac12}$ or on the right of $x_{i+\frac12}.$
This completes the proof. 
\end{proof}   
   
 We remark this explicit local representation is not only helpful for constructing the global energy landscape $W(x)$ (see Figure \ref{fig_w000}) but also enables us to identify whether the least action curves track backward in time or defined both forward and backward for the whole time $t\in \bR.$
  
  \subsection{Consistency check for Freidlin-Wentzell's variational formulas when $U(x)$ is periodic }\label{sec3_U}
  If the original potential $U(x)$ is periodic with $\min U(x)=0$, i.e., $\bar{b}=0$ and $\pi_\eps(x)$ is a reversible invariant measure
  given by $\pi_\eps(x) = C_\eps e^{-\frac{U(x)}{\eps}}$. Since $\min U(x)=0$, by Laplace's principle
  $\int e^{-\frac{U(x)}{\eps}} \sim O(1)$ as $\eps \to 0$. Thus $C_\eps \sim O(1)$. Following Varadhan's equivalent definition \cite[Definition 2.2]{VR} for the large deviation principle on compact domain, we compute
  \begin{equation}\label{LD-U}
\begin{aligned}
U(x) \leq & \liminf_{\delta \to 0} \liminf_{\eps \to 0} -\eps \log \pi_\eps(B_\delta(x)) 
\leq & \limsup_{\delta \to 0} \limsup_{\eps \to 0} -\eps \log \pi_\eps(B_\delta(x)) \leq U(x).
 \end{aligned}
\end{equation}
  Thus $U(x)$ is the rate function for the large deviation principle of invariant measure $\pi_\eps(x).$
  
As a consistent check, we prove below that, the constructed global energy landscape $W^*(x)$ from \eqref{FW_W} and \eqref{rate}, is exactly the original potential $U(x)$.
  
    \begin{prop}\label{prop_U}
    Let $W^*(x)$ be constructed from \eqref{FW_W} and \eqref{rate} with boundary data \eqref{disW}.
  If the potential $U(x)$, $x\in \bS^1$ is periodic with $\min U(x)=0$, then  $W^*(x) = U(x)$.
  \end{prop}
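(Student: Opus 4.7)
The plan is to exploit the explicit structure of the right and left barriers \eqref{hr}, \eqref{hl} in the periodic case to first compute the boundary data $W_i$ from \eqref{wi}, and then verify via Proposition \ref{prop_local} that $W(x)$ differs from $U(x)$ only by an additive constant.

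In the first step, since $U$ is periodic, the saddle and well heights form finite cyclic sequences $v_m := U(x_{m+\frac12})$ and $u_m := U(x_m)$, with every uphill segment from $x_m$ to the adjacent saddle $x_{m+\frac12}$ contributing $v_m - u_m$ and every downhill contributing $0$. A direct inspection of \eqref{hr}, \eqref{hl} shows that for each $j$, the counterclockwise path from $x_{j+1}$ to $x_i$ (contributing $\tilde h_R(x_i;x_{j+1})$) together with the clockwise path from $x_j$ to $x_i$ (contributing $\tilde h_L(x_i;x_j)$) together traverses every saddle on $\bS^1$ exactly once except $x_{j+\frac12}$, and climbs out of every well exactly once except $x_i$. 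Summing the uphill contributions yields
\begin{equation*}
\tilde h_R(x_i;x_{j+1}) + \tilde h_L(x_i;x_j) = \sum_{m=1}^k (v_m - u_m) - v_j + u_i,
\end{equation*}
so minimizing over $j$ gives $W_i = u_i + C$ with $C := \sum_{m=1}^k(v_m - u_m) - \max_{j} v_j$ a constant independent of $i$.

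In the second step, I plug these boundary data into the local representation in Proposition \ref{prop_local} on each interval $[x_i,x_{i+1}]$. On the uphill half $[x_i, x_{i+\frac12}]$, formula \eqref{hr} gives $W_i + h_R(x;x_i) = U(x) + C$, while \eqref{hl} gives $W_{i+1} + h_L(x;x_{i+1}) = v_i + C$, which dominates because $U(x) \leq v_i$ on this interval. On the downhill half $[x_{i+\frac12}, x_{i+1}]$, the roles of the two candidates swap and the minimum again equals $U(x) + C$ for the same reason. This yields $W(x) = U(x) + C$ pointwise on $\bS^1$, and since $\min U = 0$,
\begin{equation*}
W^*(x) = W(x) - \min_i W(x_i) = U(x) + C - (\min_i u_i + C) = U(x).
\end{equation*}

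The main subtlety is the cyclic bookkeeping in the first step: the saddle-plus-well identity must be interpreted correctly in the degenerate cases $j+1 \equiv i$ or $j \equiv i$, where one of the two paths is trivial and the other wraps the entire circle. In all such cases the formula persists because $x_{j+\frac12}$ and $x_i$ are precisely the saddle and the well that remain absent from the union of the two paths; a small case check together with skew periodicity confirms this.
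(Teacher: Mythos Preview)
Your proof is correct and follows the same two-step skeleton as the paper (first pin down $W_i - U(x_i) = \text{const}$, then upgrade to $W(x) = U(x) + \text{const}$), but the execution of each step differs in a useful way.

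In Step 1, the paper argues indirectly: it computes the increment $W_{i+1}-W_i$ by taking differences of $\tilde h_R$ and $\tilde h_L$ and invoking the identity $J_+ = J_-$ that holds in the periodic case. You instead derive the closed form $\tilde h_R(x_i;x_{j+1}) + \tilde h_L(x_i;x_j) = \sum_m(v_m-u_m) - v_j + u_i$ directly from the combinatorics of which saddles and wells are visited, and read off both $W_i = u_i + C$ and the explicit constant $C = \sum_m(v_m-u_m) - \max_j v_j$. This is more transparent and gives strictly more information.

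In Step 2, the paper proves $U(x) = \min_i\bigl(U(x_i) + h(x;x_i)\bigr)$ from scratch via the triangle inequality for Peierls barriers. You instead invoke Proposition~\ref{prop_local} (legitimate, since Lemma~\ref{lem:disW} guarantees the hypothesis \eqref{disW}), which reduces the global minimum to a two-term local minimum on each $[x_i,x_{i+1}]$ and makes the comparison $U(x)\le v_i$ immediate. This reuse of the local representation is cleaner and avoids repeating the triangle-inequality bookkeeping. Your handling of the degenerate cyclic cases $j=i$ and $j=i-1$ is also correct.
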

  \begin{proof}
  Step 1. We prove Freidlin-Wentzell's variational formula \eqref{wi} $W_i = U(x_i) + \text{ const}.$
  
  First, define the total right/left barrier in one period as 
  \begin{equation}
  J_{\pm} = \sum_{\ell=1}^k \bbs{ U_{\ell\pm \frac12}  - U_\ell }. 
  \end{equation}

  Second, for any $x_{i}, x_j \in \bS^1$, by elementary calculations, we see
  \begin{equation}
  \tilde{h}_R(x_{i+1};x_{j+1}) - \tilde{h}_R(x_{i};x_{j+1}) = \left\{ \begin{array}{cc}
  U(x_{i+\frac12}) - U(x_i), & j\neq i;\\
U(x_{i+\frac12}) - U(x_i) - J_+, & j=i.
  \end{array}
   \right.
  \end{equation}
  Similarly, we have
    \begin{equation}
  \tilde{h}_L(x_{i+1};x_{j}) - \tilde{h}_L(x_{i};x_{j}) = \left\{ \begin{array}{cc}
  -\bbs{U(x_{i+\frac12}) - U(x_{i+1})}, & j\neq i;\\
-\bbs{U(x_{i+\frac12}) - U(x_{i+1})} +  J_-, & j=i.
  \end{array}
   \right.
  \end{equation}
  Thus we obtain
      \begin{equation}
  \tilde{h}_R(x_{i+1};x_{j+1}) - \tilde{h}_R(x_{i};x_{j+1}) + \bbs{  \tilde{h}_L(x_{i+1};x_{j}) - \tilde{h}_L(x_{i};x_{j})} = \left\{ \begin{array}{cc}
 U(x_{i+1}) - U(x_{i}), & j\neq i;\\
U(x_{i+1}) - U(x_{i}) +  J_- - J_+, & j=i.
  \end{array}
   \right.
  \end{equation}
  
  Third, when $U(x)$ is periodic, it is easy to verify $J_+ - J_- =0$. Therefore,
  \begin{equation}
  \tilde{h}_R(x_{i+1};x_{j+1}) + \tilde{h}_L(x_{i+1};x_{j})    =   \tilde{h}_R(x_{i};x_{j+1}) +\tilde{h}_L(x_{i};x_{j})  +U(x_{i+1}) - U(x_{i}).
   \end{equation}
   Taking minimum w.r.t. $j$ and using the definition of $W_i$ in \eqref{wi}, we have
   \begin{equation} 
\begin{aligned}
W_{i+1}  = W _{i}  +U(x_{i+1}) - U(x_{i}).
 \end{aligned}
\end{equation}
This implies
\begin{equation} 
\begin{aligned}
W_i -U(x_i) = \text{ const }, \quad i =1, \cdots, k.
 \end{aligned}
\end{equation}

Step 2. We prove 
\begin{equation}\label{tm_h}
U(x) = \min_{i=1,\cdots, k} \bbs{U(x_i) + h(x;x_i)}.
\end{equation}

Fix any $x\in [i-\frac12, i+\frac12]$, on the one hand, from the definition of quasi-potential, we have
\begin{equation}
U(x) = U(x_i) + h(x; x_i).
\end{equation}
On the other hand, we need to prove for any $j\neq i$
\begin{equation}
U(x) \leq U(x_j) + h(x; x_j).
\end{equation}
From the property of $h(x; x_j)$ in Proposition \ref{prop_h}, below we only prove case that $x$ belongs to the nonincreasing part   of $h(x; x_j)$. Another case that $x$ belongs to the nondecreasing part of $h(x; x_j)$ has the same argument.

Since $x$ belongs to the nonincreasing part   of $h(x; x_j)$ and $U(x)$ is periodic, so a clockwise path from $x_j$ to $x$ can be regarded as $x<x_{i+\frac12}<x_{i}< \cdots < x_{j-\frac12}<x_j$. Thus
\begin{equation} 
\begin{aligned}
U(x) \leq&  U(x_{i+1})+h(x;x_{i+1}) \\
\leq&     U(x_{i+2})+ h(x_{i+1};x_{i+2}) + h(x; x_{i+1}) = U(x_{i+2}) + h(x;x_{i+2}) \\
\leq& \cdots \leq U(x_j) + h(x; x_j).
 \end{aligned}
\end{equation}
Thus, we obtain \eqref{tm_h}. Replace $U(x_i)$ by $U(x_i)+c = W_i$ in \eqref{tm_h}, and then 
\begin{equation}
U(x) + c = \min_{i=1,\cdots, k} \bbs{W_i + h(x;x_i)} = W(x).
\end{equation}

  \end{proof}

 \subsection{The global energy landscape $W^*(x)$ is a viscosity solution}\label{sec3_vis}
 
 Recall Hamiltonian \eqref{Hs}, we now prove the continuous periodic global landscape $W(x)$ constructed in \eqref{FW_W} is a viscosity solution to
the stationary HJE in $\bS^1$.
 
  \begin{prop}\label{prop_W}
 Assume there are $k$ stable local minima   of $U(x)$, interleaved by $k$ unstable local maxima indexed as \eqref{id1} and \eqref{id2}. Let $W(x)$ be constructed in \eqref{FW_W}.  Then
 \begin{enumerate}[(i)]
\item  $W(x)$ is Lipschitz continuous and periodic.
\item There are at most one  non-differential point belonging to each increasing (resp. decreasing) interval of the original potential $U(x)$, where $W(x)$ is  an increasing function  connected to a constant (resp.   a constant connected to a decreasing function). Particularly, $W(x)$ is differentiable at all the critical points $x_i, x_{i+\frac12}$, $i=1, \cdots, k$.
\item  $W(x)$ is a    viscosity solution to HJE
\begin{equation}\label{HJE_w}
 H(W'(x),x) = W'( W'-U' )=0, \quad x\in \bS^1
\end{equation}
and satisfies the boundary data $W(x_j)=W_j$ at $x_j$, $j=1, \cdots, k.$
\end{enumerate}
   \end{prop}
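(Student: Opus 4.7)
The plan is to leverage the local representation in Proposition \ref{prop_local} together with the explicit shape information for the Peierls barriers recorded in Proposition \ref{prop_h}, so that on each interval $[x_i, x_{i+1}]$ the analysis reduces to studying the $\min$ of just two smooth pieces, $W_i + h_R(x;x_i)$ and $W_{i+1}+h_L(x;x_{i+1})$. Part (i) is then immediate: since each $h(\cdot;x_j)$ is periodic and Lipschitz continuous by Proposition \ref{prop_h}(i), and finite minima of periodic Lipschitz functions are again periodic and Lipschitz, $W$ inherits both properties directly from \eqref{FW_W}.

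For part (ii), I would fix $[x_i, x_{i+1}]$ and invoke Proposition \ref{prop_local}. From the explicit formulas \eqref{hr}--\eqref{hl}, on the increasing subinterval $(x_i, x_{i+\frac12})$ of $U$, the branch $h_R(\cdot;x_i)$ is strictly increasing with slope $U'$ while $h_L(\cdot;x_{i+1})$ is constant; on $(x_{i+\frac12}, x_{i+1})$, $h_R(\cdot;x_i)$ is constant while $h_L(\cdot;x_{i+1})$ is strictly decreasing with slope $U'$. A strictly monotone function and a constant cross at most once, producing the announced at-most-one non-differentiable point per monotone piece. Differentiability at every critical point $x_i$ and $x_{i+\frac12}$ follows because both competing branches have slope $U'(x_i)=0$, respectively $U'(x_{i+\frac12})=0$, at those points; the minimum of two functions with the same zero derivative is again differentiable with zero derivative.

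For part (iii), I would verify the viscosity conditions pointwise. At every differentiable point, the local formulas force $W'(x)\in\{0,U'(x)\}$, hence $H(W'(x),x)=W'(x)(W'(x)-U'(x))=0$ automatically. At a crossing point $x^*$ produced in part (ii), the one-sided derivatives are $0$ and $U'(x^*)$ and form a concave corner (increasing-then-flat on an up-slope of $U$, flat-then-decreasing on a down-slope). Hence, for any $C^1$ test function $\varphi$ touching $W$ from above at $x^*$, the slope $\varphi'(x^*)$ is sandwiched between $0$ and $U'(x^*)$, and on that interval $p\mapsto p(p-U'(x^*))\leq 0$, giving the subsolution inequality; the supersolution inequality is vacuous because no $C^1$ function can touch a concave corner from below. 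The boundary identity $W(x_j)=W_j$ is read off from \eqref{FW_W} upon noting that $h(x_j;x_j)=0$ and that the discrete weak KAM identity of Lemma \ref{lem:disW} forces the minimum in \eqref{FW_W} at $x=x_j$ to equal $W_j$.

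The main obstacle will be handling the non-differentiable crossings in part (iii): one must verify that only concave corners appear (so the supersolution condition is truly vacuous), and that the admissible intermediate slopes of test functions $\varphi$ always lie in the interval on which $H\leq 0$. Both facts ultimately reduce to the fixed sign of $U'$ on each monotone subinterval and to which branch lies above on which side of $x^*$, information provided by the explicit descriptions of $h_R$ and $h_L$ in \eqref{hr}--\eqref{hl}.
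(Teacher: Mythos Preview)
Your proposal is correct and follows essentially the same route as the paper: part (i) via inheritance from Proposition \ref{prop_h}, part (ii) via shape analysis on each monotone subinterval of $U$, and part (iii) by reducing the viscosity check to the concave-corner case together with Lemma \ref{lem:disW} for the boundary data. The only minor difference is that for (ii) you first invoke the local representation of Proposition \ref{prop_local} to cut the minimum down to two branches, whereas the paper argues directly that on any increasing (resp.\ decreasing) interval each lifted barrier $W_i+h(\cdot;x_i)$ already has one of three shapes (constant, $U+\text{const}$, or one followed by the other), and that this class is closed under taking minima; both reductions lead to the same conclusion.
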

\begin{proof}
First, from Proposition \ref{prop_h}, $h(y;x_i)$ is Lipschitz continuous and periodic, so by the definition in \eqref{FW_W}, $W(x)$ satisfies conclusion (i).

Second, similar to the observations for $h_R(x;x_i)$ defined in \eqref{hr}, we characterize the shape of $W(x)$. For each increasing interval of $U(x)$,  $h(x; x_i) + W_i$  with different $x_i$, have only three possible shapes: constant, increasing part of $U(x)+\text{ const }$, or increasing function $U(x)+\text{ const }$  connected to a constant.   It is easy to verify that the minimum among all those $h(x; x_i)+ W_i$ gives $W(x)$ in this increasing interval, which remains to be one of these three types. Thus  there is  at most one non-differential connection point for $W(x)$, where an increasing function  is connected to a constant. The scenario for each decreasing interval of $U(x)$ is similar, where the only possible connection point is a constant connected to a decreasing function. This complete conclusion (ii).

Notice the number of non-differential points are finite and are of the same shape as in Proposition \ref{prop_h}, so the verification of the viscosity solution to \eqref{HJE_w} of conclusion (iii) is exactly the same as that of Proposition \ref{prop_h}.
For the boundary conditions, recall that from Lemma \ref{lem:disW}, the boundary data satisfies the discrete weak KAM problem \eqref{disW}. Thus $W(x_i) = \min_{j=1, \cdots, k}\{ W_j+h(x;x_j)\} = W_i$ for all $i=1, \cdots, k.$ This finishes the proof.
\end{proof}

\section{The global energy landscape $W^*(x)$ is a weak KAM solution}\label{sec4}

In this section, we first characterize the projected Aubry set $\mathcal{A}$  and uniqueness sets for the weak KAM solutions. Then in Proposition \ref{thm1} and Corollary \ref{cor_kam}, we  prove the main result that the global energy landscape $W(x)$ constructed in \eqref{FW_W} is a weak KAM solution, from which, each calibrated curve and the projected Mather set $\mathcal{M}_0$ can be determined. The projected Mather set $\mathcal{M}_0$ is indeed the  projected Aubry set $\mathcal{A}$, which are all the critical points of the original potential $U(x)$. Moreover, the constructed $W(x)$ is the maximal Lipschitz continuous viscosity solution satisfying the boundary data $W(x_i)=W_i$ given in \eqref{wi}. These boundary data is chosen via Lemma \ref{lem:disW} so that $W^*(x)=W(x)-\min_{x\in\bS^1} W(x)$ is the rate function for the large deviation principle of the invariant measures of the diffusion process on $\bS^1$. Hence this gives a meaningful selection principle for   weak KAM solutions to \eqref{HJE_w}. In subsections \ref{sec4_prob} and \ref{sec4_nonunique}, we give more discussions on the probability interpretations and different selection principles.

\subsection{Projected Aubry set $\mathcal{A}$ and the uniqueness set}\label{sec4_Aubry}
In this subsection, we first characterize the projected Aubry set $\mathcal{A}$, which is a uniqueness set for the weak KAM solution to HJE \eqref{HJE_w}. We also show  that all   of uniqueness sets must include all the local maxima/minima of $U(x)$ in Lemma \ref{prop_mather}. This includes the projected Mather set $\mathcal{M}_0$, which is also a uniqueness set for the weak KAM solution to HJE \eqref{HJE_w}. 
Second, we prove the variational formula of $W(x)$ defined in \eqref{FW_W} can also be represented   via the boundary data on the projected Aubry set $\mathcal{A}$. This representation, after extended to the projected Aubry set $\mathcal{A}$, is the usually variational representation for the weak KAM solution; cf. \cite[Theorem 7.4]{Tran21}.
{\blue We point out the results presented in this section are known from the general weak KAM theory. However, for our simple one-dimensional example, the proofs are explicit and simple. So we only outline  proofs for completeness. }

For the one dimensional Hamiltonian \eqref{Hs}, an equivalent characterization for the projected Aubry set $\mathcal{A}$  \eqref{AA} is given by using the 
viscosity solutions to HJE \eqref{HJE_w}. 
\begin{lem}
Assume there are $k$ stable local minimums   of $U(x)$, interleaved by $k$ unstable local maximums indexed as \eqref{id1} and \eqref{id2}.
The projected Aubry set for the Hamiltonian  \eqref{Hs} is
\begin{equation}\label{AA}
\mathcal{A} : = \{x_i,x_{i+\frac12}\,;\, i=1,\cdots, k\}.
\end{equation}
\end{lem}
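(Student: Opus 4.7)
\emph{Approach.} The plan is to use the standard characterization of the projected Aubry set as the zero set of the diagonal Peierls barrier,
\[
\mathcal{A} = \Bigl\{x \in \bS^1 \,:\, h(x;x) = 0\Bigr\}, \qquad h(x;x) := \liminf_{T\to+\infty}\inf_{\gamma(0)=\gamma(T)=x}\int_0^T L(\dot\gamma,\gamma)\,dt,
\]
and to verify that this zero set is exactly the critical points of $U$.

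\emph{Forward inclusion.} For any critical point $x_\star\in\{x_i,x_{i+\frac12}\}$, $U'(x_\star)=0$, so the stationary curve $\gamma(t)\equiv x_\star$ has $L(0,x_\star)=\tfrac14(U'(x_\star))^2=0$ and the action integral vanishes for every $T$. Hence $h(x_\star;x_\star)=0$ and $x_\star\in\mathcal{A}$.

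\emph{Reverse inclusion.} For $y$ non-critical I would show that every closed loop at $y$ has action bounded below by a strictly positive constant, surviving the infinite-time $\liminf$. The key structural fact is \eqref{L0}: $L\geq 0$ with equality iff $\dot\gamma=-U'(\gamma)$, so zero-action arcs trace the downhill gradient flow, which is strictly monotone in $U$ on the universal cover $\bR$ and thus cannot close on a non-critical point. Quantitatively, using the identity $L=\tfrac14(\dot\gamma-U'(\gamma))^2+\dot\gamma\,U'(\gamma)$ and the skew-periodicity $U(x+1)=U(x)-\bar b$, any loop $\gamma:[0,T]\to\bS^1$ with $\gamma(0)=\gamma(T)=y$ lifts to $\tilde\gamma(T)=\tilde y+n$ for some $n\in\bZ$ and obeys
\[
\int_0^T L\,dt \,=\, \int_0^T \tfrac14(\dot\gamma-U'(\gamma))^2\,dt \,-\, n\bar b.
\]
In the $n=0$ case, the explicit $h_R,h_L$ formulas of Proposition \ref{prop_h} show that the optimal such loop descends from $y$ to the flanking local minimum $x_i$ at zero cost along $\dot\gamma=-U'$, then climbs back to $y$ along the reverse flow $\dot\gamma=+U'$ at cost $U(y)-U(x_i)>0$; the symmetric alternative via $x_{i+\frac12}$ costs $U(x_{i+\frac12})-U(y)>0$. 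Thus $h(y;y)\geq\min\{U(y)-U(x_i),\,U(x_{i+\frac12})-U(y)\}>0$. For $n\neq 0$, each additional turn around $\bS^1$ must traverse every local maximum uphill, contributing at least $|n|\min\{J_+,J_-\}$ (in the notation of Proposition \ref{prop_U}) to the action after accounting for the $n\bar b$ correction, which again gives a positive lower bound.

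\emph{Main obstacle.} The delicate step is the infinite-time $\liminf$: the optimal $n=0$ loop sketched above uses $T\to+\infty$, approaching $x_i$ tangentially along the downhill flow and climbing back on a time-reversed trajectory, so the value $U(y)-U(x_i)$ is attained only as a limit and must be shown not to be undercut. I would justify this by the same comparison used in the maximality proof of Proposition \ref{prop_h}: on the return leg from (arbitrarily close to) $x_i$ to $y$, the identity above and nonnegativity of $\tfrac14(\dot\gamma-U'(\gamma))^2$ give $\int L\,dt\geq \int\dot\gamma\,U'(\gamma)\,dt=U(y)-U(x_i+o(1))$, with equality only along the uphill characteristic $\dot\gamma=+U'$. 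Combined with the $n\neq 0$ bookkeeping this yields $h(y;y)>0$ strictly, completing the reverse inclusion.
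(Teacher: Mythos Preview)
Your approach is correct and is in fact the alternative route the paper mentions immediately after its own proof (see the remark around \eqref{h00}). The paper itself argues via a different characterization: it takes $\mathcal{A}$ to be the set of base points $x$ for which the Ma\~n\'e potential $v(\cdot;x)$ is a genuine viscosity \emph{solution} (not merely subsolution) of \eqref{HJE_h} on $\bS^1$, and then invokes Proposition~\ref{prop_h} (critical base points give viscosity solutions) together with Lemma~\ref{lem:mane} in the appendix (non-critical base points force a supersolution failure at the starting point, equivalently a non-entropy shock for the associated Burgers equation). So the paper's argument is PDE-flavored and piggybacks on structural results already established, whereas yours is variational and self-contained, computing loop actions directly. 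Both are short; the paper's is shorter only because the heavy lifting has been exported to Proposition~\ref{prop_h} and Lemma~\ref{lem:mane}.

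One minor point on your $n\neq 0$ bookkeeping: the phrase ``contributing at least $|n|\min\{J_+,J_-\}$ after accounting for the $n\bar b$ correction'' is a little loose. The clean statement is that the minimal action of a loop at $y$ with winding number $n>0$ (resp.\ $n<0$) equals $nJ_+$ (resp.\ $|n|J_-$), obtained by concatenating the piecewise uphill/downhill optimal path on the cover; since $J_\pm>0$ under the standing assumption of finitely many genuine extrema, this already gives the strict positivity you need without invoking the $(\dot\gamma-U')^2$ identity. Your $n=0$ analysis and the handling of the $T\to\infty$ limit are fine.
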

\begin{proof}
From \cite[Definition 7.32]{Tran21}, the projected Aubry set $\mathcal{A}$ is all the starting points $x$ such as the Ma\~n\'e potential $v(y;x)$ is a viscosity solution to \eqref{HJE_w} on $\bS^1$.
Indeed, from Lemma \ref{lem:mane} (see also \cite[Theorem 2.41]{Tran21} or \cite[Theorem 2.4]{fathi2009hausdorff}), we know the Ma\~n\'e potential $v(y;x)$ for any $x\in\bS^1$ is the maximal Lipschitz continuous viscosity subsolution. On the one hand, Proposition \ref{prop_h} shows that if $x$ is a critical point, then  $v(y;x)=h(y; x)$ is a viscosity solution to \eqref{HJE_w}. Thus critical points belong to the projected Aubry set $\mathcal{A}$. On the other hand, for any $x$ not being the critical points, then from Lemma \ref{lem:mane}, the shape of $v(y;x)$  violates the viscosity supersolution test for HJE \eqref{HJE_w} and thus $\mathcal{A}$ must be a subset of all the critical points. This gives the characterization of the  projected Aubry set \eqref{AA}.
\end{proof}
We remark there are also other characterizations for the projected Aubry set $\mathcal{A}$, see for instance
 \cite{Contreras2001, fathi2008weak}
\begin{equation}\label{h00}
\mathcal{A}=\{ x\in \bS^1; \, h(x,x)=0\}.
\end{equation}
Notice the Lagrangian $L(s,x)\geq 0$ and satisfies the property \eqref{L0}. Thus \eqref{h00} can also be used to conclude the same characterization \eqref{AA}. 
 
Now we recall that the projected Mather set $\mathcal{M}_0$ \cite[Theorem 4.12.6]{fathi2008weak} is a uniqueness set for weak KAM solutions. Here a uniqueness set $M$ means that if two weak KAM solutions $u$ and $\tilde{u}$ coincide on an  $M$, then they must be same everywhere. In the following Lemma \ref{prop_mather}, we prove the projected Mather set $\mathcal{M}_0$ in our example must contain all the local maxima $\{x_{i+\frac12}; \, i=1, \cdots, k\}$ of $U(x).$ Later in Proposition \ref{thm1}, we will prove the projected Mather set is exactly the projected Aubry set $\mathcal{A}$.
{\blue \begin{lem}\label{prop_mather}
All the uniqueness sets $M\subset \mathcal{A}$ of the weak KAM solutions to HJE \eqref{HJE_w}  must contain all the local maxima $\{x_{i+\frac12}\,; \, i=1, \cdots, k \}$ and local minima $\{x_{i}\,; \, i=1, \cdots, k \}$ of $U(x).$ 
\end{lem}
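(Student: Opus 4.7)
My plan is to prove the contrapositive: if $M\subsetneq\mathcal{A}$ misses some critical point of $U$, then $M$ is not a uniqueness set. Splitting according to the type of the missing extremum, I construct for each case two distinct weak KAM solutions that coincide on $M$.

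The preliminary step is a structural description of all weak KAM solutions, obtained by a viscosity sub/super-differential analysis at potential kinks for $H(p,x)=p(p-U'(x))$, parallel to the argument used in the proof of Proposition \ref{prop_h}(iii). On each interval $[x_i,x_{i+1}]$ between two adjacent local minima, any weak KAM solution $W$ must have the three-piece shape: $W$ increases from $W_i:=W(x_i)$ along $U$, stays flat at a plateau height $c$ across a neighborhood of $x_{i+\frac12}$, and then decreases along $U$ down to $W_{i+1}$. The admissible plateau heights form the closed interval
\begin{equation*}
c\in \Big[\max(W_i,W_{i+1}),\;\min\{W_i+U(x_{i+\frac12})-U(x_i),\;W_{i+1}+U(x_{i+\frac12})-U(x_{i+1})\}\Big],
\end{equation*}
which is non-degenerate because $U(x_i), U(x_{i+1})<U(x_{i+\frac12})$. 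Dually, once the plateau heights $c_\pm:=W(x_{i\pm\frac12})$ at the two local maxima adjacent to $x_i$ are prescribed, the value $W_i$ is free in the non-degenerate closed interval
\begin{equation*}
W_i\in \Big[\max_\pm\{c_\pm+U(x_i)-U(x_{i\pm\frac12})\},\;\min_\pm c_\pm\Big],
\end{equation*}
with the descent and ascent kinks on $(x_{i-\frac12},x_i)$ and $(x_i,x_{i+\frac12})$ then uniquely determined.

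\emph{Case 1 ($x_{i+\frac12}\notin M$).} Pick any weak KAM solution $u$ and define $\tilde u$ to coincide with $u$ outside $(x_i,x_{i+1})$, while on $(x_i,x_{i+1})$ replace the plateau height $c$ of $u$ with some $\tilde c\neq c$ drawn from the above admissible interval. The three-piece shape is preserved so $\tilde u$ is again a viscosity solution to \eqref{HJE_w}, hence a weak KAM solution. It agrees with $u$ on $\mathcal{A}\setminus\{x_{i+\frac12}\}\supset M$, yet $\tilde u(x_{i+\frac12})\neq u(x_{i+\frac12})$.

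\emph{Case 2 ($x_i\notin M$).} Pick any weak KAM solution $u$, keep the plateau heights $c_\pm=u(x_{i\pm\frac12})$ unchanged, and define $\tilde u$ to coincide with $u$ outside $(x_{i-\frac12},x_{i+\frac12})$. On this interval assign a different value $\tilde W_i\neq u(x_i)$ at $x_i$ from the dual admissible interval; the descent point on $(x_{i-\frac12},x_i)$ and the ascent point on $(x_i,x_{i+\frac12})$ then shift accordingly. The resulting $\tilde u$ is a viscosity (hence weak KAM) solution, agrees with $u$ on $\mathcal{A}\setminus\{x_i\}\supset M$, yet $\tilde u(x_i)\neq u(x_i)$.

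The main obstacle is the preliminary structural description, in particular the verification that both admissible intervals are non-degenerate; this uses the same kink/viscosity analysis as in Proposition \ref{prop_h}(iii) together with the strictness of the local extrema of $U$. Once the structural description is in place, both constructions are simple local modifications on a single interval, and the preservation of the viscosity solution property (and therefore of the weak KAM property, by Fathi's characterization on a compact manifold) is immediate.
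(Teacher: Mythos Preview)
Your overall strategy---prove the contrapositive by first pinning down the three-piece structure of every weak KAM solution on each interval $[x_i,x_{i+1}]$, and then perturb locally---is different from the paper's proof, which instead works with the variational representation $\min_{x^*\in D}\{W(x^*)+h(\cdot;x^*)\}$ using zero boundary data on $M$ versus on $M\cup\{x_{i+\frac12}\}$. Your route is more structural and avoids invoking the Peierls-barrier machinery; the paper's route is shorter once that machinery is in place.

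There is, however, a genuine gap in your non-degeneracy claim. You assert that the admissible plateau interval
\[
\big[\max(W_i,W_{i+1}),\;\min\{W_i+U(x_{i+\frac12})-U(x_i),\;W_{i+1}+U(x_{i+\frac12})-U(x_{i+1})\}\big]
\]
is non-degenerate for \emph{every} weak KAM solution, justified by $U(x_i),U(x_{i+1})<U(x_{i+\frac12})$. But those strict inequalities only give $W_i<W_i+(U(x_{i+\frac12})-U(x_i))$ and $W_{i+1}<W_{i+1}+(U(x_{i+\frac12})-U(x_{i+1}))$; they say nothing about the cross constraint. If $W_{i+1}-W_i=U(x_{i+\frac12})-U(x_i)$ the interval collapses to $\{W_{i+1}\}$. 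Concretely, take $k=2$ with periodic $U$, $U(x_1)=U(x_2)=0$, $U(x_{\frac12})=U(x_{1+\frac12})=1$, and the $C^1$ solution $W$ that equals $U$ on $[x_1,x_{1+\frac12}]$, is constant $1$ on $[x_{1+\frac12},x_{2+\frac12}]$, and equals $U$ on $[x_{2+\frac12},x_1+1]$. Here $W_1=0$, $W_2=1$, and the plateau over $[x_1,x_2]$ is pinned at $1$, so your Case~1 perturbation is impossible starting from this $u$. The dual interval in Case~2 degenerates analogously when $c_--c_+=U(x_{i-\frac12})-U(x_i)$.

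The fix is minor: do not start from ``any'' weak KAM solution but from one for which the relevant interval is manifestly non-degenerate, for instance the solution with $W_j=0$ at every local minimum (this is essentially what the paper does). With that choice the endpoints coincide, the admissible interval has strictly positive length, and the rest of your argument goes through unchanged.
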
}

\begin{proof}
Let $M\subset \mathcal{A}$ be a uniqueness set for the weak KAM solutions to HJE \eqref{HJE_w}. We prove for any $i$, the maximum point $x_{i+\frac12} \in M.$ 

Using the argument by contradiction, we assume if $x_{i+\frac12} \notin M$ for some $i$. Then we can choose the boundary values $W(x^*)=0$ for all $x^*\in M$. It is easy to see zero boundary values always satisfy the  consistent condition for the discrete weak KAM problem on $M$, i.e.,
 \begin{equation}\label{disW1}
 W(x) = \min_{x^*\in M} \{ W(x^*) + h(x; x^*)\}, \quad \forall x \in M.
 \end{equation}
 Then using $W(x^*)$, $x^*\in M$ to construct a weak KAM solution $W(x)$, $x\in \bS^1$. Particularly, we have
 \begin{equation} 
 W(x_{i+\frac12}) = \min_{x^*\in M} \{ 0 + h(x_{i+\frac12}; x^*)\} >0.
 \end{equation}
On the other hand, setting $\tilde{W}(x_{i+\frac12}) = 0$, together with zero values in $M$, we can verify they still satisfy the consistent condition for the discrete weak KAM problem on the subset $M \cup \{x_{i+\frac12}\} \subset \mathcal{A}$, thus   we can construct another weak KAM solution
 \begin{equation}\label{disW2}
\tilde{W}(x) = \min_{x^*\in M \cup \{x_{i+\frac12}\}} \{ W(x^*) + h(x; x^*)\}, \quad \forall x \in \bS^1.
 \end{equation}
One can see $\tilde{W}(x^*)= W(x^*)=0$ for $x^*\in M$ but $\tilde{W} \neq W$ at $x_{i+\frac12}.$ This contradicts with the definition of the uniqueness set. Similar arguments apply to the local minimums. 
\end{proof}

\begin{rem}\label{rem_zero}
One can see an illustration of  the uniqueness set in   Figure \ref{fig_w000} (Right). Indeed, although $W(x_i)=0$, $i=1, \cdots,k$, $W(x)\not\equiv 0$ in  Figure \ref{fig_w000} (Right). But if adding additional boundary values $W(x_{i+\frac12})=0$ for $i=1, \cdots, k$, then the only solution is $W(x)\equiv 0.$ This is because $h(y;x_{i+\frac12})=0$ for $y\in[x_i, x_{i+1}]$; see  Remark \ref{rem_h0}. 
\end{rem}

Next, we observe $W(x)$ in \eqref{FW_W} is determined by the boundary values on the set of all the stable critical points of $U(x)$.  Then the variational construction \eqref{FW_W} induces  the values of $W(x)$   at all the unstable critical points. After including these induced boundary values, the construction of $W(x)$ can be alternatively extended as below.
\begin{lem}\label{lem:Wmm}
Assume there are $k$ stable local minima   of $U(x)$, interleaved by $k$ unstable local maxima periodically indexed as \eqref{id1} and \eqref{id2}.
Let $W(x)$  be defined in \eqref{FW_W} and $W_i$ be defined in \eqref{wi}. Then $W(x)$ has an alternative representation
\begin{equation}\label{wA}
W(x) = \min_i \{ W_i + h(x; x_i),\,  W(x_{i+\frac12}) + h(x; x_{i+\frac12}) \}.
\end{equation}
\end{lem}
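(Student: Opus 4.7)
The plan is to prove the two inequalities between the right-hand side of \eqref{wA} and the original definition \eqref{FW_W}.

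\textbf{The easy inclusion.} Writing $R(x) := \min_i \{ W_i + h(x; x_i),\, W(x_{i+\frac12}) + h(x; x_{i+\frac12}) \}$, I immediately have $R(x) \leq W(x)$ since the minimum on the right of \eqref{wA} is taken over the same collection of pairs $W_j + h(x;x_j)$ for $j=1,\dots,k$ that appears in \eqref{FW_W}, plus the additional terms indexed by the local maxima $x_{i+\frac12}$. So augmenting the index set can only lower the minimum.

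\textbf{The nontrivial inclusion.} What I need is $R(x) \geq W(x)$, and because the stable-minima terms already agree, it suffices to check that for every $i=1,\dots,k$,
\begin{equation}\label{eq:plan-key}
W(x_{i+\frac12}) + h(x; x_{i+\frac12}) \geq W(x), \qquad \forall x\in \bS^1.
\end{equation}
Evaluating \eqref{FW_W} at the local maximum gives $W(x_{i+\frac12}) = W_{j^*} + h(x_{i+\frac12}; x_{j^*})$ for some $j^*$ depending on $i$. Inserting this into the left of \eqref{eq:plan-key} reduces matters to the triangle-type inequality
\begin{equation}\label{eq:plan-triangle}
h(x_{i+\frac12}; x_{j^*}) + h(x; x_{i+\frac12}) \geq h(x; x_{j^*}),
\end{equation}
since then the left side of \eqref{eq:plan-key} is $\geq W_{j^*} + h(x; x_{j^*}) \geq \min_j \{W_j + h(x;x_j)\} = W(x)$.

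\textbf{The main technical point.} The triangle inequality \eqref{eq:plan-triangle} is where I expect the only real work. Because $x_{i+\frac12}$ and $x_{j^*}$ are both critical points of $U$, the Peierls barrier $h(\cdot;x_{j^*})$ coincides with the Mañé potential $v(\cdot;x_{j^*})$ started at $x_{j^*}$, and similarly for the one based at $x_{i+\frac12}$; this follows from the computations in \eqref{quasi1}–\eqref{quasi2} which show that paths of arbitrarily large $T$ can be produced at zero extra cost by dwelling at a critical point (use $L(-U'(x_c),x_c)=0$ from \eqref{L0}). Consequently the argument reduces to a concatenation argument for Mañé potentials: given a near-optimal path $\gamma_1$ from $x_{j^*}$ to $x_{i+\frac12}$ on a time interval $[0,T_1]$ and a near-optimal path $\gamma_2$ from $x_{i+\frac12}$ to $x$ on $[0,T_2]$, splice them into a single path on $[0,T_1+T_2]$ whose total action is the sum; then take $T_1, T_2\to\infty$ and pass to the infimum, noting that the $\liminf_T$ in the definition \eqref{pb} of the Peierls barrier is unaffected by appending an arbitrarily long dwelling segment at the critical intermediate point. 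This yields \eqref{eq:plan-triangle}, completes \eqref{eq:plan-key}, and hence the lemma.
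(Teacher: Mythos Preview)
Your proof is correct and follows essentially the same approach as the paper's own proof: both establish the easy inequality $R(x)\leq W(x)$ by observing that the augmented index set can only lower the minimum, and both obtain the reverse inequality by expanding $W(x_{i+\frac12})$ via \eqref{FW_W} and invoking the triangle inequality $h(z;x)+h(y;z)\geq h(y;x)$ for Peierls barriers. The paper simply cites this triangle inequality as known, whereas you sketch its justification via path concatenation; that extra detail is fine but not a different route.
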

\begin{proof}
Using definition \eqref{FW_W}, at $x_{j+\frac12}$, we have
\begin{equation}
\begin{aligned}
W(x_{j+\frac12}) + h(x; x_{j+\frac12}) &= \min_i \bbs{W_i + h(x_{j+\frac12}; x_i) + h(x; x_{j+\frac12})} \\
&\geq \min_i \bbs{W_i + h(x; x_i) } = W(x),
\end{aligned}
\end{equation}
where we used the triangle inequality $h(z; x)+h(y; z) \geq h(y; x).$ Thus we have
\begin{equation}
\min_j \bbs{W(x_{j+\frac12}) + h(x; x_{j+\frac12}) } \geq W(x). 
\end{equation}
This, together with $\min_i \{ W_i + h(x; x_i),\,  W(x_{i+\frac12}) + h(x; x_{i+\frac12}) \}\leq W(x)$ implies \eqref{wA}.
\end{proof}

\subsection{The computation of the calibrated curves and the projected Mather set $\mathcal{M}_0$}\label{sec4_kam}
After all the preparations above, we now prove $W(x)$ constructed via \eqref{FW_W} and boundary data \eqref{wi} is a weak KAM solution of negative type. 

Recall the definition of weak KAM solutions of negative type, cf.  \cite[Definiteion 4.1.11]{fathi2008weak}
\begin{defn}
We say a continuous function $u\in C(\bS^1)$ is a weak KAM solution of negative type to HJE \eqref{HJE_w} if
\begin{enumerate}[(I)]
\item $u$ is dominated by $L$ (denoted as $u\prec L$), i.e., for any absolutely continuous curve $\gamma\in AC([a,b]; \bS^1)$,  
\begin{equation}
u(\vg(b)) - u(\vg(a)) \leq \int_a^b L(\dot{\vg}, \vg) \ud t;
\end{equation}
\item for any $x$, there exists a continuous, piecewise $C^1$ curve $\vg: (-\8,0] \to \bS^1$ with $\gamma(0)=x$ such that for any $a<b \leq 0$
\begin{equation}
u(\vg(b) )- u(\vg(a))= \int_a^b L(\dot{\vg}, \vg) \ud t.
\end{equation}
\end{enumerate}
\end{defn}
\begin{rem}
Such a curve $\vg$ in condition (II) is called a calibrated curve, or a backward characteristic; see examples in Figure \ref{fig_pi} for two calibrated curves associated with $W(x)$. 
\end{rem}

{\blue
\begin{prop}\label{thm1}
Assume there are $k$ stable local minimums   of $U(x)$, interleaved by $k$ unstable local maximums indexed as \eqref{id1} and \eqref{id2}. Let $W(x)$ be defined in \eqref{FW_W}.
 Then
 \begin{enumerate}[(i)]
  \item For each $x\in \bS^1$, there exists a calibrated curve tracking back to a critical point of $U(x)$. 
  \item  The projected Mather set $\mathcal{M}_0$ is same as the projected Aubry set $\mathcal{A}=\{x_i, x_{i+\frac12};\, i=1,\cdots, k\}.$
  \end{enumerate}
\end{prop}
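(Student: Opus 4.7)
The plan for part (i) is to construct the calibrated curve explicitly using the local representation of $W$ from Proposition~\ref{prop_local} together with the degeneracy of the Ma\~n\'e Lagrangian. The key observation is that $L(s,x)=\tfrac14(s+U'(x))^2$ vanishes precisely on the ``downhill'' graph $\dot\gamma=-U'(\gamma)$, whereas on the ``uphill'' trajectory $\dot\gamma=U'(\gamma)$ one has $L=(U'(\gamma))^2=U'(\gamma)\dot\gamma=\tfrac{d}{dt}U(\gamma)$. These two first-order ODEs are the only candidates that can produce a calibration identity: the former matches the flat pieces of $W$, the latter matches the pieces where $W=U+\text{const}$.

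For the actual construction, fix $x\in(x_i,x_{i+1})$ and assume without loss of generality that the minimum in \eqref{localW} is attained by $W_i+h_R(x;x_i)$, the other case being symmetric. If $x\in(x_i,x_{i+\frac12}]$, then by \eqref{hr}, $W(y)=W_i+U(y)-U(x_i)$ on this subinterval, and the uphill ODE $\dot\gamma=U'(\gamma)$ with $\gamma(0)=x$ yields $\tfrac{d}{dt}W(\gamma)=L(\dot\gamma,\gamma)$ pointwise; integrating gives calibration, and the trajectory stays in $[x_i,x_{i+\frac12}]$ while converging to the local minimum $x_i$ as $t\to-\infty$. If instead $x\in[x_{i+\frac12},x_{i+1})$, then $W\equiv W_i+U(x_{i+\frac12})-U(x_i)$ is locally constant, and the downhill ODE $\dot\gamma=-U'(\gamma)$ produces a curve with $L\equiv0$ along which $W$ is preserved, converging to the local maximum $x_{i+\frac12}$ as $t\to-\infty$. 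By Proposition~\ref{prop_W}(ii), at most one non-differentiable point separates these regimes inside each monotonicity interval of $U$, and the chosen flow never crosses this transition, so a single ODE integration suffices without corners. For part (ii), the general inclusion $\mathcal{M}_0\subset\mathcal{A}$ from the weak KAM theory combined with the identification $\mathcal{A}=\{x_i,x_{i+\frac12}\}$ of Section~\ref{sec4_Aubry} reduces the task to showing every critical point of $U$ lies in $\mathcal{M}_0$. For each critical $x_c$, a direct computation of the Euler-Lagrange equation for $L$ gives $\ddot x=U'(x)U''(x)$, so $(x_c,0)\in T\bS^1$ is a fixed point of the EL flow. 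Hence $\mu_c:=\delta_{(x_c,0)}$ is flow-invariant and $\int L\,d\mu_c=\tfrac14(U'(x_c))^2=0=c^*$, making $\mu_c$ a Mather measure and placing $x_c\in\mathcal{M}_0$.

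The main obstacle I anticipate is controlling the asymptotic behaviour of the calibrated curve: the backward flow approaches its target critical point only in infinite time, so beyond verifying the calibration identity on every finite window $[a,b]\subset(-\infty,0]$, one must confirm that the action integral $\int_{-\infty}^0 L(\dot\gamma,\gamma)\,dt$ is well-defined and agrees with $W(x)-\lim_{t\to-\infty}W(\gamma(t))$. This reduces to an integrability check which follows from exponential convergence near hyperbolic equilibria when $U''(x_c)\neq0$, or more robustly from the quadratic bound $(U'(\gamma))^2\leq C|U(\gamma)-U(x_c)|$ near each critical point combined with the monotonicity of $U$ along the flow. A minor bookkeeping issue is the single non-differentiable point $x^*$ of $W$ per monotonicity interval, which is harmless because the two representations $W_i+h_R$ and $W_{i+1}+h_L$ agree at $x^*$ and the selected trajectory stays strictly on one side.
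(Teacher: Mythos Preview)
Your proposal is correct and, for part (i), essentially mirrors the paper's proof: both arguments split according to whether $W$ is locally constant or locally equal to $U+\text{const}$, and in each regime run the appropriate first-order ODE (downhill $\dot\gamma=-U'(\gamma)$ for the flat pieces, uphill $\dot\gamma=U'(\gamma)$ for the sloped pieces), verifying the calibration identity directly. You organize the case split via Proposition~\ref{prop_local}, while the paper uses the shape characterization of Proposition~\ref{prop_W}(ii); these are equivalent ways of accessing the same local structure. One small remark: your concern about the integrability of $\int_{-\infty}^0 L(\dot\gamma,\gamma)\,dt$ is unnecessary, since the definition of a calibrated curve only demands the identity on finite windows $[a,b]\subset(-\infty,0]$; the paper simply checks this and then notes $\gamma(-\infty)$ is a critical point by monotone convergence of the ODE.

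For part (ii), your route differs from the paper's. You invoke the standard inclusion $\mathcal{M}_0\subset\mathcal{A}$ and then, for the reverse inclusion, directly check that each Dirac mass $\delta_{(x_c,0)}$ at a critical point is flow-invariant (via the Euler--Lagrange equation $\ddot x=U'(x)U''(x)$) with zero action, hence a Mather measure. The paper instead builds Mather measures by time-averaging along the calibrated curves from part (i) and passing to the limit $T\to\infty$, obtaining the same Dirac masses $\delta_{\dot\gamma(-\infty)}\otimes\delta_{\gamma(-\infty)}$ in the end. Your argument is a bit more self-contained and sidesteps the averaging construction; the paper's approach has the narrative advantage of tying the Mather measures directly to the calibrated curves just produced, making the link between (i) and (ii) explicit.
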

\begin{proof}
Recall the explicit characterization for the shape of $W(x)$. 
Given any $x\in \bS^1$, we first assume $x$ locates on a decreasing interval of $U(x)$, i.e., $x\in [x_{i-\frac12}, x_{i}]$ for some $i$. Then from conclusion (ii) in Proposition \ref{prop_W}, we know that there exists a $x^* \in [x_{i-\frac12}, x_{i}]$ such that either Case (a): $x$ belongs to a constant interval such that $W(x)\equiv W(x_{i-\frac12})$ for $x_{i-\frac12}\leq x \leq x^*;$ or Case (b): $x$ belongs to a  decreasing interval such that $W(x)= U(x) + \text{ const }$ for $x^* \leq x \leq x_i;$ see Figure \ref{fig_pi}.

For Case (a), we solve the following 	`downhill' ODE backward in time
\begin{equation}\label{ODE1}
\dot{\vg} = - U'(\vg), \quad t\leq 0; \qquad \vg(0) = x.
\end{equation}
Then we obtain a unique ODE solution $\gamma(t)$ with $\vg(-\8) = x_{i-\frac12}.$ Along this ODE solution, we verify that for any $a<b \leq 0$
\begin{equation}\label{L0L0}
 \int_a^b L(\dot{\vg}, \vg) \ud t =  \int_a^b \frac14 |\dot\vg + U'(\vg)|^2 \ud t = 0 = W(\gamma(b)) - W(\gamma(a)). 
\end{equation}

For Case (b), we solve the following `uphill' ODE backward in time
\begin{equation}\label{ODE2}
\dot{\vg} =   U'(\vg), \quad t\leq 0; \qquad \vg(0) = x.
\end{equation}
Then we obtain a unique ODE solution $\gamma(t)$ with $\vg(-\8) = x_{i}.$ Along this ODE solution, we verify that for any $a<b \leq 0$
\begin{equation}
 \int_a^b \frac14 |\dot\vg + U'(\vg)|^2 \ud t = \int_a^b \bbs{\frac14 |\dot\vg - U'(\vg)|^2 + \dot \vg U'(\vg) } \ud t = U(\vg(b)) - U(\vg(a)) = W(\gamma(b)) - W(\gamma(a)). 
\end{equation}

Therefore, for both two cases, we verified $W(x), x\in [x_{i-\frac12}, x_{i}]$ satisfies the condition (II). Similarly, if $x\in [x_{i}, x_{i+\frac12}]$ for some $i$, one can also repeat the same argument to verify $W(x), x\in [x_{i}, x_{i+\frac12}]$ satisfies condition (II).

In summary, calibrated curves have three types:  Type 1) For any differential point $x\in \bS^1$ locating on strictly increasing or decreasing part, there exists a unique backward characteristic solving \eqref{ODE2} such that $\vg(0)=x$, $\vg(-\8)$ tracks back to a unique local minimum(attractor) $x_i$ in the same basin of attraction as $x$;  Type 2) For any differential point $x\in \bS^1$ located on constant part of $W(x)$, there exists a unique backward characteristic solving \eqref{ODE1} such that $\vg(0)=x$, $\vg(-\8)$ tracks back to the local maximum $x_{i-\frac12}$ at the end of the constant segment of $W(x)$;  Type 3) For any non-differential points $x\in \bS^1$, there exist two backward characteristics either solving \eqref{ODE2} or \eqref{ODE1} and thus they track back to one of the adjacent critical points.

Consequently, based on the Aubry-Mather theory, cf.  \cite{evans2008weak, fathi2008weak},  a Mather measure concentrates on one of those extremes $\gamma(-\8)$ for the above calibrated curves and $s=0$, i.e., $\mu = \delta(x-\gamma(-\8))\delta(s)$. In detail, 
one can define $\mu_T$ for fixed $T$ as $\la f, \mu_T \ra := \frac1T\int_{-T}^0 f(\dot{\gamma}(t), \gamma(t)) \ud t$. Then taking the limit we have
$$
\int_{T \bS^1} f(s,x)   \mu(\ud s, \ud x) = \lim_{T \to +\8} \int_{T \bS^1} f(s,x)   \mu_T(\ud s, \ud x)
= \int_{T \bS^1} f(s,x)     \delta_{\dot{\vg}(-\8)} \otimes  \delta_{\vg(-\8)}.  $$
  Thus   the Mather set is given by
\begin{equation}\label{Mset}
\tilde{M}=\overline{\cup \text{ support } \mu } = \{ (x_i ,0), (x_{i+\frac12}, 0) ; \, i=1,\cdots,k\}.
\end{equation}
Hence we conclude (i) and (ii).
\end{proof}
}

Recall $W(x)$ is a viscosity solution to \eqref{HJE_w} proved in Proposition \ref{prop_W}.
Notice the weak KAM condition (I) can be directly implied from $W(x)\in \text{Lip}(\bS^1)$ being a   viscosity subsolution satisfying $H( W'(x), x) \leq 0$ a.e. $x\in \bS^1$; see Proposition \ref{prop_W}. Indeed, for any absolutely continuous curve $\gamma(\cdot)$ with $\vg\in AC([a,b];\bS^1)$, we have
\begin{equation}
\begin{aligned}
W(\vg(b)) - W(\vg(a)) =& \int_a^b W'(\vg(s)) \cdot \dot{\vg}(s) \ud s\\
 \leq &  \int_a^b \bbs{L(\dot{\vg}(s), \vg(s)) + H(W'(\vg(s)),\vg(s))  } \ud s \leq \int_a^b  L(\dot{\vg}(s), \vg(s)) \ud s.
 \end{aligned}
\end{equation}
The    maximality of $W(x)$ is the same as Proposition \ref{prop_maximalL}, where the boundary data $W_j$ is given only on a  subset of the projected Aubry set. Thus we refer to the proof of Proposition \ref{prop_maximalL}. This, together with Proposition \ref{thm1}, yields
{\blue \begin{cor}\label{cor_kam}
Assume there are $k$ stable local minimums   of $U(x)$, interleaved by $k$ unstable local maximums indexed as \eqref{id1} and \eqref{id2}. Let $W(x)$ be defined in \eqref{FW_W}.
 Then
$W(x)$   is a weak KAM solution of negative type to HJE \eqref{HJE_w}. 
  And $W(x)$  
  is the  maximal Lipschitz continuous viscosity solution to \eqref{HJE_w} that satisfying boundary data $W(x_j)=W_j$ at $x_j$, $j=1, \cdots,k$.
\end{cor}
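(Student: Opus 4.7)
The plan is to verify the two defining conditions for a weak KAM solution of negative type, and then deduce maximality by a direct calibration argument.

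For condition (II), the existence of a calibrated backward curve at every point is essentially already proved in Proposition \ref{thm1}. Given $x \in \bS^1$, depending on whether $x$ lies on a constant segment of $W$ or on a ``sloped'' segment where $W$ coincides (up to an additive constant) with $U$, I would take $\gamma$ to be the unique solution of the downhill ODE \eqref{ODE1} or the uphill ODE \eqref{ODE2} with $\vg(0) = x$, defined for $t \in (-\infty, 0]$. The calibration identity $W(\vg(b)) - W(\vg(a)) = \int_a^b L(\dot\vg,\vg)\,dt$ then follows from the completion-of-squares computation shown in the proof of Proposition \ref{thm1}: on downhill curves both sides vanish, and on uphill curves both sides equal $U(\vg(b)) - U(\vg(a))$.

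For condition (I), I would invoke Proposition \ref{prop_W} to get that $W$ is a Lipschitz viscosity subsolution of \eqref{HJE_w}, which by Rademacher's theorem gives $H(W'(x),x) \le 0$ for a.e.\ $x \in \bS^1$. The Fenchel--Young inequality then yields the pointwise bound
\begin{equation*}
W'(x)\cdot v \;\le\; L(v,x) + H(W'(x),x) \;\le\; L(v,x) \qquad \text{for a.e.\ } x,\ \forall v\in\bR.
\end{equation*}
Integrating along any absolutely continuous $\vg \in AC([a,b];\bS^1)$ and using the chain rule for Lipschitz compositions,
\begin{equation*}
W(\vg(b)) - W(\vg(a)) \;=\; \int_a^b W'(\vg(s))\dot\vg(s)\,ds \;\le\; \int_a^b L(\dot\vg(s),\vg(s))\,ds,
\end{equation*}
which is exactly $W \prec L$. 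Combined with the previous step, this proves that $W$ is a weak KAM solution of negative type.

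For maximality, let $\tilde W$ be any Lipschitz viscosity subsolution of \eqref{HJE_w} with $\tilde W(x_j) = W_j$ for $j = 1,\dots,k$. The Fenchel argument above applies verbatim to $\tilde W$ and gives $\tilde W \prec L$. Hence for any $y \in \bS^1$, any admissible $T \ge 0$, and any absolutely continuous curve $\vg$ with $\vg(0) = x_j$, $\vg(T) = y$,
\begin{equation*}
\tilde W(y) - W_j \;=\; \tilde W(\vg(T)) - \tilde W(\vg(0)) \;\le\; \int_0^T L(\dot\vg,\vg)\,dt.
\end{equation*}
Taking infimum over $T$ and $\vg$ gives $\tilde W(y) \le W_j + h(y;x_j)$, and then minimizing over $j \in \{1,\dots,k\}$ gives $\tilde W(y) \le W(y)$ by the definition \eqref{FW_W}. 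The main subtlety, and the only place where care is required, is that the maximality statement is made with respect to boundary data prescribed only on the stable minima $\{x_j\}$ rather than on the full projected Aubry set $\mathcal{A}$; but the variational formula \eqref{FW_W} together with Lemma \ref{lem:Wmm} shows that the values on the maxima are already determined by $\{W_j\}$ through the Peierls barriers, so no additional data are needed and the calibration bound above suffices to conclude maximality among all such subsolutions.
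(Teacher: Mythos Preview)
Your proof is correct and follows essentially the same route as the paper: condition (II) via Proposition~\ref{thm1}, condition (I) via the Fenchel--Young inequality applied to the a.e.\ subsolution property from Proposition~\ref{prop_W}, and maximality by the same domination bound (the paper defers this last step to Proposition~\ref{prop_maximalL}, whose proof is exactly the inline argument you wrote). The only superfluous point is your closing appeal to Lemma~\ref{lem:Wmm}: since \eqref{FW_W} is already a minimum over the stable minima $\{x_j\}$ alone, your calibration bound $\tilde W(y)\le W_j+h(y;x_j)$ followed by minimizing over $j$ gives $\tilde W\le W$ directly, with no need to invoke the induced values on the maxima.
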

}

{\blue
We point out that we only need the boundary data on the local minima of $U$, i.e., a subset of the projected Aubry set, to select a meaningful weak KAM solution which serves as the rate function in the large deviation principle for invariant measures. On the other hand, given any boundary data in a subset of the projected Aubry set, one can construct a maximal Lipschitz continuous viscosity solution; see Proposition \ref{prop_maximalL}.
}

\begin{rem}
From \eqref{L0L0}, it is easy to see that for any curve $\gamma:\bR \to \bS^1$ which solves the ODE $\dot{\vg} = \pm U'(\vg), t\in \bR$, is a least action curve. This gives the projected Ma\~n\'e set 
$$\overline{\cup \{\vg(t); \vg \text{ solves } \dot{\vg} = \pm U'(\vg), t\in \bR\}} = \bS^1.$$
The Ma\~n\'e set itself is the collection of the Lagrangian graph $(\vg, \pm U(\vg))$ of those least action curves. Furthermore, in this simple example, it is easy to see the Mather set \eqref{Mset} is a compact Lipschitz graph, and is invariant under the Euler-Lagrange flow. This is the essence of the celebrated  Mather graph theorem that characterizes the graph property of the Mather set.
\end{rem}

\subsubsection{Invariant solutions of the Lax-Oleinik semigroup}

In this section, using the equivalent characterization of   invariant solutions of the Lax-Oleinik semigroup \cite[Proposition 4.6.7]{fathi2008weak},  we give a direct corollary that $W(x)$ defined in \eqref{FW_W} is   an invariant solution for the Lax-Oleinik semigroup $S_t$ associated with the dynamic HJE $\pt_t u + H(\pt_x u(x), x)=0$, i.e., for $t\geq 0$,
\begin{equation}
(S_t u_T)(y):=   \inf_x\bbs{u_T(x) + \inf_{\vg; \gamma(0)=x, \, \vg(t)=y} \int_0^t L(\dot{\vg}, \vg) \ud \tau}, \quad \forall u_T(x).
\end{equation}
\begin{cor}\label{cor_st}
Any weak KAM solution $w(x)$ to HJE \eqref{HJE_w} is an invariant solution of the Lax-Oleinik semigroup $S_t$ and satisfies the representation
\begin{equation}\label{res}
w(y) = \inf_{x\in\bS^1} \bbs{w(x) + v(y;x)} = \inf_{x_i \in \mathcal{A}} \bbs{w(x_i) + h(y; x_i)}.
\end{equation}
Particularly,  $W(x)$   defined in \eqref{FW_W} is an invariant solution of  the Lax-Oleinik semigroup $S_t$.
\end{cor}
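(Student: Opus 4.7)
The plan is to establish invariance from the two defining conditions of a weak KAM solution of negative type, then squeeze out the variational representation by taking the infimum over $t\geq 0$ and finally restricting to $\mathcal{A}$. First I would prove $S_t w = w$ for all $t\geq 0$ by two one-sided bounds. For $S_t w \leq w$, fix $y\in \bS^1$ and $t\geq 0$; by condition (II) there is a calibrated curve $\gamma:(-\infty,0]\to\bS^1$ with $\gamma(0)=y$. Setting $x=\gamma(-t)$ and $\tilde\gamma(s):=\gamma(s-t)$ on $[0,t]$, the calibration identity reads $w(y)-w(x)=\int_0^t L(\dot{\tilde\gamma},\tilde\gamma)\,d\tau$, so plugging this particular pair $(x,\tilde\gamma)$ into the Lax--Oleinik infimum gives $(S_t w)(y)\leq w(y)$. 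For $S_t w \geq w$, take any $x$ and any admissible curve $\gamma$ on $[0,t]$ joining $x$ to $y$; condition (I) domination yields $w(y)-w(x)\leq \int_0^t L\,d\tau$, and taking infima produces $(S_t w)(y)\geq w(y)$.

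Next I would pass from semigroup invariance to the Mañé potential representation. Since $v(y;x)=\inf_{T\geq 0}\inf_{\gamma(0)=x,\gamma(T)=y}\int_0^T L\,d\tau$, invariance for every $t\geq 0$ gives
\begin{equation*}
w(y)=\inf_{t\geq 0}(S_t w)(y)=\inf_{x\in\bS^1}\Bigl(w(x)+\inf_{t\geq 0}\inf_{\gamma(0)=x,\gamma(t)=y}\int_0^t L\,d\tau\Bigr)=\inf_{x\in\bS^1}\bigl(w(x)+v(y;x)\bigr),
\end{equation*}
using that the choice $t=0$, $x=y$ (hence $v(y;y)=0$) already attains the obvious upper bound $w(y)$.

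To refine the infimum to the projected Aubry set $\mathcal{A}$, I would use Proposition \ref{thm1}: the calibrated curve $\gamma$ from $y$ tracks back to a critical point $x_i\in\mathcal{A}$ as $t\to-\infty$. By calibration, $w(y)-w(\gamma(-T))=\int_{-T}^0 L\,d\tau$. Concatenating a short almost-zero-cost piece from $x_i$ to $\gamma(-T)$ (using continuity of $w$ and $L\geq 0$ vanishing along the Euler--Lagrange flow at critical points, see \eqref{L0}), one reaches $h(y;x_i)\leq w(y)-w(x_i)$, i.e.\ $w(y)\geq \inf_{x_i\in\mathcal{A}}(w(x_i)+h(y;x_i))$. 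The reverse inequality comes from domination: for any $x_i\in\mathcal{A}$ and any time-$T$ curve from $x_i$ to $y$, $w(y)-w(x_i)\leq \int_0^T L\,d\tau$, whence $w(y)\leq w(x_i)+h(y;x_i)$ (using $v(y;x_i)=h(y;x_i)$ on critical points, already recorded after \eqref{pb}). This yields the chain of equalities in \eqref{res}, and applying Corollary \ref{cor_kam} (that $W(x)$ from \eqref{FW_W} is a weak KAM solution of negative type) gives the final assertion.

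The main obstacle I anticipate is the approximation step $h(y;x_i)\leq w(y)-w(x_i)$: one must glue the calibrated tail onto a short bridge near $x_i$ while keeping the bridge cost negligible, and also justify that the liminf as $T\to\infty$ in the definition \eqref{pb} of $h$ is actually realized along the calibrated trajectory. Both issues are mild here because $L$ vanishes precisely on the downhill flow $\dot\gamma=-U'(\gamma)$, and critical points are equilibria, so arbitrarily cheap bridges exist; the rest is a clean liminf comparison.
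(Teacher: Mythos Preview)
Your argument is correct and complete; the bridging step you flag is indeed harmless here, since $v(\cdot;x_i)$ is Lipschitz with $v(x_i;x_i)=0$, so $v(\gamma(-T);x_i)\to 0$ and the triangle inequality $v(y;x_i)\leq v(\gamma(-T);x_i)+\int_{-T}^0 L\,d\tau$ closes the estimate without any explicit bridge construction. The route, however, differs from the paper's. The paper does not prove $S_t w=w$ from conditions (I) and (II) directly but simply cites \cite[Proposition 4.6.7]{fathi2008weak}; your two one-sided bounds are precisely the content of that proposition, so your treatment is more self-contained. More substantially, for the reduction from $\inf_{x\in\bS^1}(w(x)+v(y;x))$ to $\inf_{x_i\in\mathcal{A}}(w(x_i)+h(y;x_i))$ the paper does not track calibrated curves at all: it observes that the values $w(x_i)$ on $\mathcal{A}$ are consistent boundary data, that the variational formula $\tilde w(y)=\min_{x_i\in\mathcal{A}}(w(x_i)+h(y;x_i))$ is itself a weak KAM solution (by the same mechanism as Corollary~\ref{cor_kam}), and then invokes the fact that $\mathcal{A}$ is a uniqueness set \cite[Theorem 4.12.6]{fathi2008weak} to conclude $w=\tilde w$. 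Your approach trades this abstract uniqueness argument for an explicit domination/calibration squeeze, which is more elementary and does not rely on the external uniqueness theorem; the paper's approach is shorter but leans on two citations to Fathi's book.
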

\begin{proof}
First, from \cite[Proposition 4.6.7]{fathi2008weak}, any weak KAM solution $w(x)$ is an invariant solution of the Lax-Oleinik semigroup $S_t$. Thus
$$w(y) = \inf_x \bbs{w(x) + \inf_{\vg; \gamma(0)=x, \, \vg(t)=y} \int_0^t L(\dot{\vg}, \vg) \ud \tau}.$$
Taking infimum w.r.t. $t$ and exchanging   $\inf_x$ and $\inf_t$, we obtain $w(y) = \inf_x \bbs{w(x) + v(y;x)}$.

Second, take the boundary values $w(x_i)$ on the projected Aubry set $\mathcal{A}$. Since the projected Aubry set is a uniqueness set for the weak KAM solution \cite[Theorem 4.12.6]{fathi2008weak}, these boundary values can uniquely define a weak KAM solution. Meanwhile, from Corollary \ref{cor_kam} $w(x)=\inf_{x_i \in \mathcal{A}} \bbs{w(x_i) + h(y; x_i)}$ is a weak KAM solution and thus the representation \eqref{res} holds uniquely.
\end{proof}
\begin{rem}
We remark for a compact domain, the existence of   invariant solutions of $S_t$ and the convergence from dynamic solution to   an invariant solution were proved in \cite{fathi2008weak, namah1999remarks}. However, the  invariant solutions are not unique, as well as the weak KAM solutions; see Examples in Figure \ref{fig_w000} in the next section and \cite{fathi2009hausdorff}. 
\end{rem}

\subsection{Generating a set of consistent boundary data and constructing a   maximal Lipschitz continuous viscosity solution}\label{sec4_gene}
In this subsectiton, given any non-consistent boundary data on a subset of the projected Aubry set, we can  first use it to generate a set of consistent data satisfying the discrete weak KAM problem. Then based on these consistent data, we prove the variational formula $W(x)$ is the maximal Lipschitz continuous viscosity solution to the HJE satisfying the generated boundary data.
\subsubsection{Non-consistent boundary data induce a set of consistent data satisfying  \eqref{disW}}
Now given any boundary data 
\begin{equation}
\{W_{\ell} \} \quad \text{ at } D:=\{x'_\ell; \,\, \ell = 1,2, \cdots, m\} \subset \mathcal{A}, 
\end{equation}
 which may not satisfy the discrete weak KAM problem \eqref{disW}. The following procedure  can be used to obtain a set of consistent boundary data satisfying \eqref{disW}.
 For any $j=1, \cdots, m$, define
 \begin{equation}\label{it}
  W(x) =\min_{\ell=1, \cdots, m} \{W_\ell + h(x;x'_{\ell})\}.
 \end{equation}
Then $\tilde{W}_\ell:= W(x'_\ell)$ is a set of consistent data satisfying discrete weak KAM problem
\begin{equation}\label{disWn}
\tilde{W}_j = \min_{\ell=1, \cdots, m} \{\tilde{W}_\ell + h(x'_{j};x'_{\ell})\}, \quad j = 1, \cdots, m.
\end{equation}

\begin{prop}\label{prop_gene}
Given any boundary data $\{W_{\ell} \}$ on a subset $\{x'_\ell; \,\, \ell = 1,2, \cdots, m\}$ of $\mathcal{A}$, then 
\begin{equation}
\tilde{W}_j: = \min_{\ell=1, \cdots, m} \{W_\ell + h(x'_j;x'_{\ell})\}, \quad j=1, \cdots, m
\end{equation}
  satisfy the discrete weak KAM problem \eqref{disWn}.
\end{prop}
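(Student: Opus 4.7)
The plan is to establish the two inequalities defining the equality in \eqref{disWn} separately, using the triangle inequality for the Peierls barrier as the sole nontrivial ingredient. The upper bound is immediate by choosing the index $\ell = j$ in the right-hand side of \eqref{disWn}: since $h(x'_j; x'_j) = 0$, one has $\tilde{W}_j + h(x'_j; x'_j) = \tilde{W}_j$, so $\min_{\ell} \{\tilde{W}_\ell + h(x'_j; x'_\ell)\} \leq \tilde{W}_j$.

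For the reverse inequality, I would unfold the definition of $\tilde{W}_\ell$ in the right-hand side. Fix arbitrary indices $\ell, m \in \{1,\dots,m\}$. By definition, $\tilde{W}_\ell \leq W_m + h(x'_\ell; x'_m)$. Adding $h(x'_j; x'_\ell)$ gives
\begin{equation}
\tilde{W}_\ell + h(x'_j; x'_\ell) \geq \text{something we want to bound below by } \tilde{W}_j.
\end{equation}
The key structural fact is the triangle inequality $h(x'_j; x'_m) \leq h(x'_\ell; x'_m) + h(x'_j; x'_\ell)$, which follows directly from the definition of the Peierls barrier by concatenating admissible curves: a curve from $x'_m$ to $x'_\ell$ followed by a curve from $x'_\ell$ to $x'_j$ is admissible for connecting $x'_m$ to $x'_j$, and the action adds. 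Then
\begin{equation}
\tilde{W}_j \;=\; \min_{m} \{ W_m + h(x'_j; x'_m) \} \;\leq\; W_m + h(x'_\ell; x'_m) + h(x'_j; x'_\ell).
\end{equation}
Taking minimum over $m$ on the right-hand side yields $\tilde{W}_j \leq \tilde{W}_\ell + h(x'_j; x'_\ell)$, and then taking minimum over $\ell$ gives the desired lower bound $\tilde{W}_j \leq \min_\ell \{\tilde{W}_\ell + h(x'_j; x'_\ell)\}$.

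Combining the two inequalities finishes the proof. There is no serious obstacle here: the only nontrivial fact needed is the triangle inequality for $h$, which is an elementary consequence of the definition in \eqref{pb} and was implicitly used already in the proof of Lemma \ref{lem:Wmm}. The argument does not require any of the explicit one-dimensional structure from Proposition \ref{prop_h}; it is a purely algebraic/metric consequence of the fact that $h$ behaves like a (possibly asymmetric) quasi-distance on $\mathcal{A}$ with $h(x;x) = 0$. In particular, iterating the construction \eqref{it} once more would produce the same $\tilde{W}_j$, which is the consistency asserted by \eqref{disWn}.
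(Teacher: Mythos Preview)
Your argument is correct. The two inequalities are precisely the ones needed, and both rest only on $h(x'_j;x'_j)=0$ for points of $\mathcal{A}$ together with the triangle inequality for the Peierls barrier, which the paper itself invokes in Lemma~\ref{lem:Wmm}. (A cosmetic point: you use $m$ both for the number of boundary points and as a running index, which is momentarily confusing; rename the dummy index.)

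Your route is genuinely different from the paper's. The paper argues at the level of the continuous functions $W(x)=\min_\ell\{W_\ell+h(x;x'_\ell)\}$ and $\tilde W(x)=\min_\ell\{\tilde W_\ell+h(x;x'_\ell)\}$, and sandwiches them using PDE machinery: $\tilde W\le W$ follows from $\tilde W_\ell\le W_\ell$, while $W\le \tilde W$ is obtained by invoking Proposition~\ref{prop_W} (to say $W$ is a Lipschitz viscosity solution) and Proposition~\ref{prop_h}(iv) (to say each $\tilde W_\ell+h(\cdot\,;x'_\ell)$ is the \emph{maximal} such solution with the right value at $x'_\ell$). Evaluating $\tilde W=W$ at the nodes then gives the consistency. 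Your proof bypasses this entirely: it is a purely metric computation on the finite set $\{x'_1,\dots,x'_m\}$, valid whenever $h$ is a nonnegative quasi-distance satisfying the triangle inequality and vanishing on the diagonal. What you gain is generality and independence from the one-dimensional viscosity-solution analysis; what the paper's proof buys is that the identity $\tilde W(x)=W(x)$ is established for all $x\in\bS^1$, not just at the nodes, which connects the result more directly to the maximality statement in Proposition~\ref{prop_maximalL}.
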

\begin{proof}
Given boundary data $\{W_{\ell} \}$, $\ell=1, \cdots, m$, let $W(x)$ be defined in \eqref{it}.

On the one hand, $\tilde{W}_j:= W(x'_j)\leq W_j$ for any $j=1, \cdots, m.$ 
Thus 
\begin{equation}
\tilde{W}(x) = \min_{\ell=1, \cdots, m} \{\tilde{W}_\ell + h(x;x'_{\ell})\} \leq W(x).
\end{equation}

On the other hand, 
 from Proposition \ref{prop_W}, $W(x)$ is a Lipschitz continuous viscosity solution, while from Proposition \ref{prop_h}, $\tilde{W}_\ell + h(x;x'_{\ell})$ is the maximal Lipschitz continuous viscosity solution. Thus from $W(x'_\ell)=\tilde{W}_\ell$,  we have
\begin{equation}
W(x) \leq \min_{\ell=1, \cdots, m} \{ \tilde{W}_\ell + h(x;x'_{\ell}) \} = \tilde{W}(x).
\end{equation}

Therefore, we conclude $\tilde{W}(x) = W(x)$. Particularly, $\tilde{W}(x'_\ell) = W(x'_\ell) =\tilde{W}_\ell$ for $\ell=1, \cdots, m$ and thus $\tilde{W}_\ell$ is a consistent boundary value satisfying \eqref{disWn}.
\end{proof}
We point out in the above proposition, the subset $\{x'_\ell; \,\, \ell = 1,2, \cdots, m\}$ is not necessarily  a uniqueness set. Indeed, in the next subsection, we will prove that as long as the boundary values $W_{\ell}$ satisfies the discrete weak KAM problem \eqref{disWn}, then we can use those data to obtain a maximal Lipschitz continuous viscosity solution. Particularly, the weak KAM solution in Corollary \ref{cor_kam} is the maximal Lipschitz continuous viscosity solution satisfying boundary data \eqref{wi}.

\subsubsection{Maximal Lipschitz continuous viscosity solution based on consistent data}
In the next proposition, given any boundary values $W_{\ell}$, for   $ D:=\{x'_{\ell}; \,\, {\ell=1,\cdots,m}\} \subset \mathcal{A},$ if $\{W_\ell;\, \ell=1, \cdots, m\}$ satisfies the discrete weak KAM problem \eqref{disWn},
 we prove   $W(x)$ with the representation 
\begin{equation}\label{Wc}
W(x) = \min_{\ell =1, \cdots, m} \{W_\ell + h(x; x'_{\ell})\}
\end{equation}
 is indeed the maximal Lipschitz continuous viscosity solution to the HJE satisfying  given boundary values $W_\ell$, i.e.,
 \begin{equation}\label{HJE_ma}
H(W'(x),x) = W'(W'-U')=0, \quad x\in \bS^1; \quad W(x'_\ell) = W_\ell  \text{ for } x'_\ell\in D.
\end{equation}
Consequently, $W(x)$ constructed in \eqref{FW_W} is not only one of the weak KAM solution satisfying given boundary condition on all local minimums but also   the maximal Lipschitz continuous viscosity solution to $H(W'(x),x) = W'(W'-U')=0$ with those given boundary conditions.
\begin{prop}\label{prop_maximalL}
Given any boundary values $W_\ell$ on  $D=\{x'_{\ell}; \,\, {\ell=1,\cdots,m}\}\subset \mathcal{A},$ the  solution $W(x)$ constructed via 
\eqref{Wc} is the  maximal Lipschitz continuous viscosity solution to \eqref{HJE_ma}.
\end{prop}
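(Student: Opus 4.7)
The plan is to mirror the two-part strategy used earlier for Proposition \ref{prop_h}(iv) and Proposition \ref{prop_W}(iii), but starting from an arbitrary consistent boundary dataset on $D$ rather than the canonical one from \eqref{wi}. There are three things to check: (a) $W$ attains the prescribed values on $D$, (b) $W$ is a viscosity solution of \eqref{HJE_ma}, and (c) $W$ dominates every other Lipschitz viscosity solution with the same boundary data.

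For (a), evaluating \eqref{Wc} at $x'_j \in D$ gives $W(x'_j)=\min_{\ell}\{W_\ell + h(x'_j; x'_\ell)\}$, which equals $W_j$ precisely because the hypothesis is the discrete weak KAM relation \eqref{disWn}. For (b), each summand $W_\ell + h(\cdot; x'_\ell)$ is, by Proposition \ref{prop_h}, a Lipschitz continuous viscosity solution of $H(W',x)=0$ (adding a constant does not affect the equation because $H$ is independent of $u$). Hence $W$, as the pointwise minimum of finitely many viscosity solutions, is automatically a Lipschitz viscosity subsolution. To upgrade this to the supersolution property, I would reuse the one-dimensional shape classification from Propositions \ref{prop_h}(ii)--(iii) and \ref{prop_W}(ii): on each monotonic interval of $U(x)$ every candidate $W_\ell + h(\cdot; x'_\ell)$ is either a constant, a monotone translate of $U$, or a monotone-to-constant connection, and the pointwise minimum of such profiles is again of the same type. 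Thus $W$ has finitely many non-differentiable points, each of the admissible ``increasing-to-constant'' or ``constant-to-decreasing'' shape, which is exactly the corner type compatible with the viscosity supersolution test. This verification is identical to the one carried out in Proposition \ref{prop_W}(iii).

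For (c), the maximality, I would transcribe verbatim the subsolution-along-curves argument from Step~3 of the proof of Proposition \ref{prop_h}. Let $\tilde u \in \mathrm{Lip}(\bS^1)$ be any viscosity solution (hence in particular a subsolution) of \eqref{HJE_ma} satisfying $\tilde u(x'_\ell)=W_\ell$ for all $\ell$. Then $H(\tilde u'(x),x)\le 0$ a.e., so by the Fenchel inequality $\tilde u'(x)\cdot s \le L(s,x) + H(\tilde u'(x),x) \le L(s,x)$ a.e. For any absolutely continuous $\gamma:[0,t]\to \bS^1$ with $\gamma(0)=x'_\ell$, $\gamma(t)=y$,
\begin{equation*}
\tilde u(y)-W_\ell = \int_0^t \tilde u'(\gamma(s))\,\dot\gamma(s)\,\mathrm{d}s \le \int_0^t L(\dot\gamma(s),\gamma(s))\,\mathrm{d}s.
\end{equation*}
Taking the infimum over such $\gamma$ and then $\liminf$ as $t\to+\infty$ yields $\tilde u(y)\le W_\ell + h(y; x'_\ell)$, and minimizing over $\ell=1,\dots,m$ gives $\tilde u(y)\le W(y)$ for all $y\in \bS^1$. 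In particular, applying this to the canonical dataset \eqref{wi} shows that the $W$ constructed in \eqref{FW_W} is itself maximal, completing the proof.

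The one subtlety worth flagging is that in general the pointwise minimum of viscosity supersolutions need not remain a supersolution, and so step (b) could have been an obstacle. In this one-dimensional, convex-$H$ setting, however, the rigid classification of possible non-differentiable shapes rules out the bad (upward-corner) configuration, and supersolution testing reduces to an elementary case check at finitely many points. The remaining arguments are straightforward consequences of results already established in Propositions \ref{prop_h} and \ref{prop_W}.
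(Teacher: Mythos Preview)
Your proposal is correct and follows essentially the same approach as the paper's proof. The paper's argument is more compressed: it simply invokes Proposition~\ref{prop_h}(iv) to obtain $\tilde{W}(x)\le W_\ell + h(x;x'_\ell)$ for each $\ell$ and then takes the minimum, while citing Proposition~\ref{prop_W} for the viscosity-solution property of $W$ itself; you unfold both of these references explicitly (the integration-along-curves for maximality and the shape classification for the supersolution test), which makes your version more self-contained but not methodologically different.
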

\begin{proof}
From Step 4 in the proof of Proposition \ref{prop_h}, we know for any $\ell=1, \cdots, m$, the lifted Peierls barrier $W_\ell+ h(x;x'_\ell)$   is the maximal Lipschitz continuous viscosity  solution to  $W'(W'-U')=0$ satisfying the boundary value $W(x'_\ell) = W_\ell$. Given any Lipschitz continuous viscosity solution $\tilde{W}(x)$ to \eqref{HJE_ma}, since $\tilde{W}(x)$ is a Lipschitz continuous viscosity subsolution, we know
\begin{equation}
\tilde{W}(x) \leq   W_\ell + h(x; x'_\ell) 
\end{equation}
for any $x'_\ell\in D.$
Notice $\tilde{W}(x)$ satisfies all the boundary values $\tilde{W}(x'_\ell)= W_\ell$ for $x'_\ell\in D$, hence taking minimum for all $x'_\ell\in D$, we obtain
\begin{equation}
\tilde{W}(x) \leq  \min_{\ell=1, \cdots, m} \{W_\ell + h(x; x'_\ell)\} = W(x).
\end{equation}
Thus, together with Proposition \ref{prop_W},  $W(x)$ is the maximal Lipschitz continuous viscosity solution satisfying all the boundary values $W(x'_\ell)=W_\ell$ on $D$.
\end{proof}

\section{Selection principles for weak KAM solutions and probability interpretation}\label{sec4_nonunique}
{\blue In this section, we discuss the selection principle given by the global energy landscape in the large deviation principle for invariant measures and also compare it with another selection principle via the vanishing discount limit. It is well known that the dynamic HJE has a unique viscosity solution and the long time limits exist but are not unique. That is to say, a selection principle is needed for viscosity solutions to stationary HJE, even the Hamiltonian is strictly convex.  Among all the viscosity solutions, the weak KAM solutions are the maximal Lipschitz continuous viscosity solution satisfying specific boundary conditions on a uniqueness set. Hence weak KAM solution serves as a natural candidate for the selection principle and the key point to select a meaningful weak KAM solution is  the determination of boundary data on a uniqueness set of the weak KAM solutions. The variational formula for those boundary data $W(x_i)$ obtained in \eqref{wi} gives a unique determination that captures the asymptotic behaviors of the original stochastic process at each local attractors.  Equivalently, we summarize it as a selection principle for those weak KAM solutions by exchanging the double limits for $t$ and $\eps$. That is we  first take the long time limit $\lim_{t\to +\8} \rho_\eps(x,t)$ which is unique due to ergodicity  and then take the zero noise limit $\eps \to 0$ due to the large deviation principle for invariant measures. In general, the vanishing viscosity limit is an approximation method for stationary HJE, but the limit is only in the subsequence sense and is not unique. Our selection principle provides a special viscosity approximation to the stationary HJE whose vanishing viscosity limit is unique. The probability
interpretations via the Boltzmann analysis of the global energy landscape from the weak KAM perspective are also discussed. }

\subsection{Examples for non-uniqueness}\label{sec5.1}
In the proof of  Corollary \ref{cor_kam}, we did not use the explicit values of $W(x_i)$ at the local minima $x_i$. Indeed, given any boundary values $W(x_i)$ for any subset of (not necessarily all) those local minima $x_i$, as long as those boundary values   are consistent with the associated discrete weak KAM problem \eqref{disW}, then $W(x)$ determined by those given boundary values though \eqref{FW_W} is a weak KAM solution.
 
 Furthermore, we use a classical example, which appears in the first edition of the book  \cite[Section 6.4]{FW} in 1979,  to  illustrate the boundary values consistent with the discrete weak KAM problem \eqref{disW} is not unique. 
 
 Choose a skew periodic potential $U(x)$ such that $U$ has $3$ local minima $x_1, x_2, x_3$ with values $1,0,2$ and has $4$ local maxima $x_{\frac12}, x_{1+\frac12}, x_{2+\frac12}, x_{3+\frac12}$ with values $7,5,10,11.$ In Figure \ref{fig_w000} left/right, the original skew periodic potential $U(x)$ is the same as the one in Figure \ref{fig_p_m}, and two plots for $W^*(x)$ with different boundary values based on \eqref{W_ex1} and \eqref{W_ex2} are shown for comparison.
 \begin{figure} 
\includegraphics[scale=0.35]{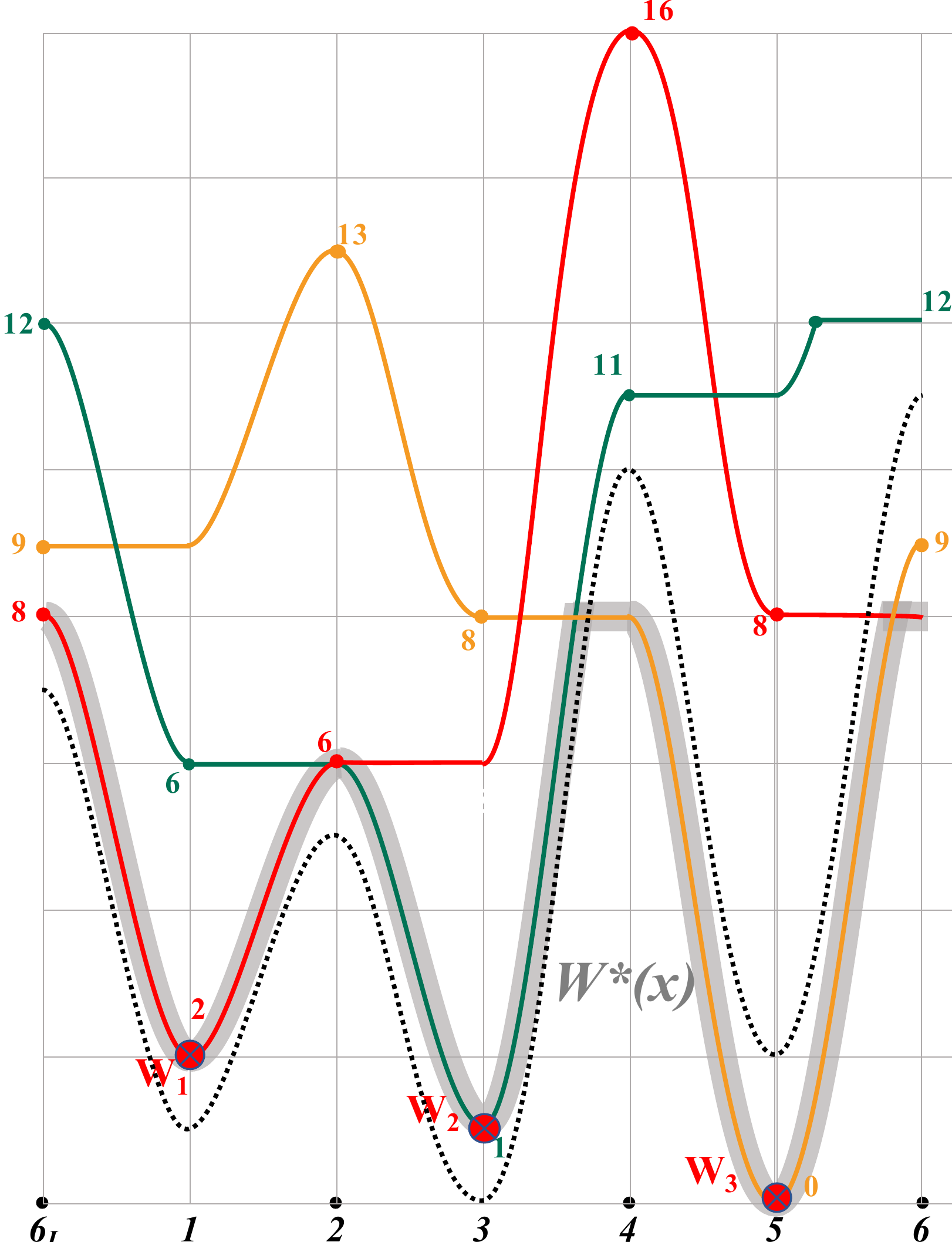} \,\,\,
\includegraphics[scale=0.35]{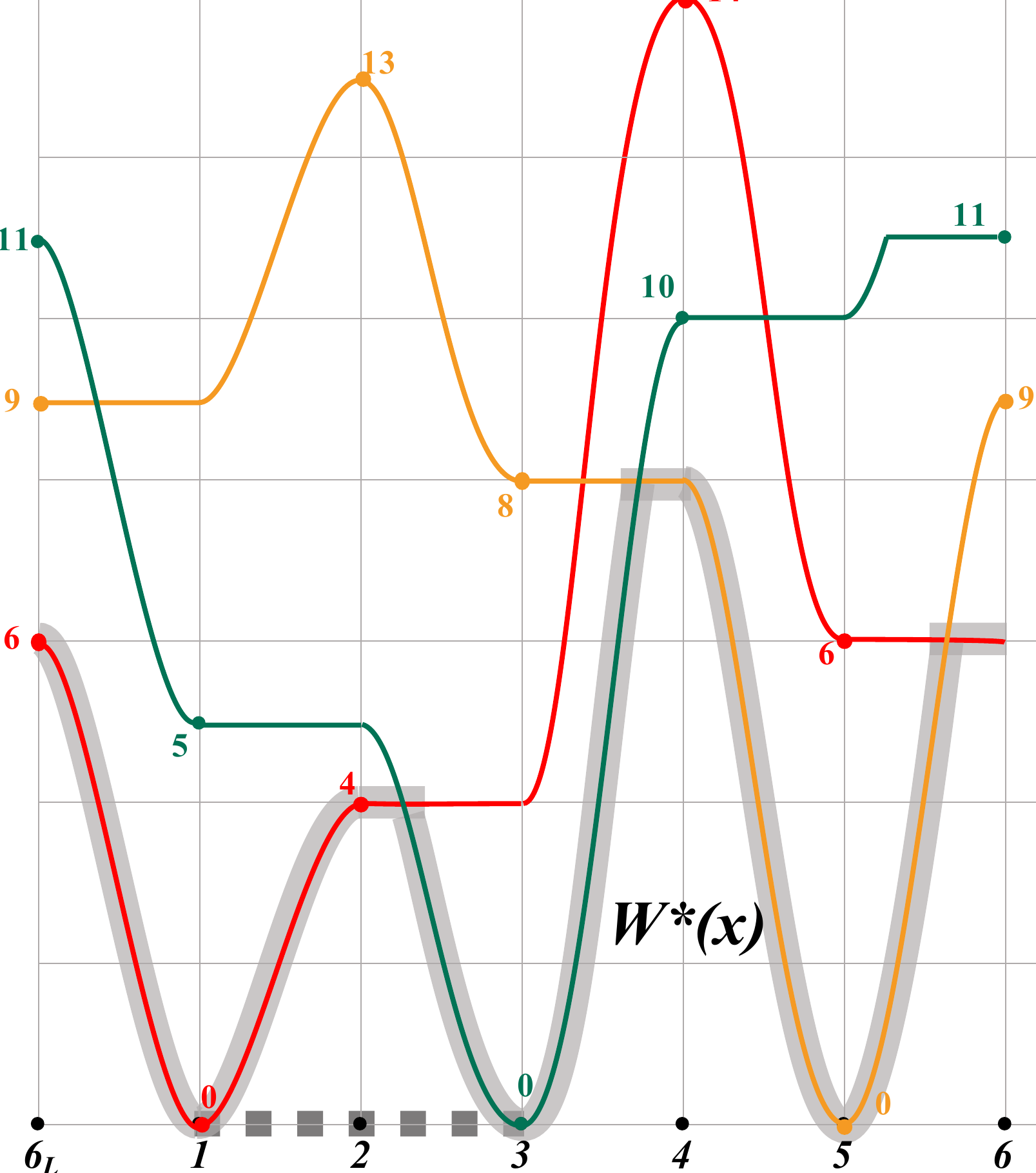}
\caption{Two examples for $W^*(x)$ computed from the variational formula \eqref{FW_W} with different consistent boundary data. (Left) The boundary values $W_1, W_2, W_3$ satisfies Freidlin-Wentzell's variational formula for boundary data \eqref{wi}. The dashed line is the original potential $U(x)$ and one can see the global adjustment for $W_1, W_2, W_3$. The explicit formula for $W^*(x)$ in \eqref{W_ex1} is marked with grey strip. (Right) $W^*(x)$ constructed with $W(x_1)= W(x_2)= W(x_3)=0$ as boundary data, which does not satisfies \eqref{wi}. If one add an additional boundary condition  $W(x_{1+\frac12})=0$, then one obtain a different solution which vanishes in the interval $[x_1,x_2]$; shown as dashed line. Both figures use the same Peierls barriers: $h(y;x_1)$ shown in red line, $h(y;x_2)$ shown in green
 line, and $h(y;x_3)$ shown in orange line.  At each connection point, only two lifted Peierls barriers turn on to finish the local trimming procedures, as described in Proposition \ref{prop_local}.   }\label{fig_w000} 
 \end{figure}  

 One has a set of boundary values computed from \eqref{wi}, $W(x_1)=13, W(x_2)=12, W(x_3)=11.$
 It is easy to verify this set of boundary values satisfy the discrete weak KAM problem \eqref{disW}. Then from \eqref{FW_W} and \eqref{rate}, $W(x)$ is given by
 \begin{equation}\label{W_ex1}
 W^*(x) = W(x) - \min_i W(x_i) = \left\{ 
 \begin{array}{cc}
 1+U(x), & x\in[x_{\frac12}, x_2];\\
  \min\{1+U(x), 8\}, & x\in[x_2, x_{2+\frac12}];\\
    U(x)-2, & x\in[x_{2+\frac12}, x_3];\\
    \min\{U(x)-2, 8\}, &x\in[x_3, x_{3+\frac12}],
 \end{array}
 \right.
 \end{equation}
 which  satisfies Proposition \ref{prop_W}.
 
 Another set of boundary values can be chosen as $W(x_1)= W(x_2)= W(x_3)=0.$
 It is easy to verify this set of boundary values also satisfy the discrete weak KAM problem \eqref{disW}. Then from \eqref{FW_W}, $W(x)$ is given by
 \begin{equation}\label{W_ex2}
 W^*(x)=W(x) = \left\{ 
 \begin{array}{cc}
  U(x)-1, & x\in[x_{\frac12}, x_{1+\frac12}];\\
  \min\{ U(x), 4\}, & x\in[x_{1+\frac12}, x_{2}];\\
    \min\{ U(x), 8\}, & x\in[x_{2}, x_{2+\frac12}];\\
     U(x)-2, &x\in[x_{2+\frac12}, x_{3}];\\
      \min\{ U(x)-2, 6\}, & x\in[x_{3}, x_{3+\frac12}].\\
 \end{array}
 \right.
 \end{equation}
 which also  satisfies Proposition \ref{prop_W}. From Corollary \ref{cor_kam}, both sets of boundary values give a weak KAM solution to \eqref{HJE_w}, so   weak KAM solutions are not unique.

\subsection{Exchange limits $\eps \to 0$, $t\to +\8$ in two large deviation principles}
Below we discuss a special case for which the long time behavior limit $t\to +\8$ and the zero noise limit $\eps \to 0$ for the diffusion process \eqref{LV} can be exchanged. Notice that in general, it is not exchangeable.

{\blue Recall the Fokker-Planck equation on $\bS^1$ \eqref{FPt} and WKB reformulation $\rho_\eps(x,t) = e^{-\frac{\psi_\eps(x,t)}{\eps}} $. The viscous HJE associated with $\psi_\eps$ is  
\begin{equation}\label{HJEpsi}
\pt_t \psi_\eps(x,t) + H(\pt_x \psi_\eps(x,t), x) = \eps \bbs{\pt_{xx}\psi_\eps(x,t) - U''(x)}, \quad x\in \bS^1,\, t>0.
\end{equation}
In general, the two limits for $\psi_\eps(x,t)$ as $\eps \to 0$ and $t\to +\8$ can not be exchanged. But with special initial data, we have the following result.}
\begin{prop}\label{prop_exchange}
Let   $W^*(x)$ be the rate function given by \eqref{rate} and assume $U(x)$ have finite local extremes. 
Assume $\rho_\eps(x,t)$ is a solution to the Fokker-Planck equation \eqref{FPt} and the initial distribution is given by $\rho_\eps(x,0) = C_\eps e^{-\frac{W^*(x)}{\eps}}$. Then
\begin{equation}\label{limit12}
\lim_{\eps \to 0} \bbs{-\eps \log \lim_{t\to +\8} \rho_\eps(x,t) }  \text{ ``}=\text{'' }  W^*(x) = \lim_{t\to +\8} \lim_{\eps \to 0} - \eps \log \rho_\eps(x,t).
\end{equation}
Here  $	\text{ ``}=\text{'' }$ is in the large deviation principle sense \eqref{LD2}.
\end{prop}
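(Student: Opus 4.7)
The two sides of \eqref{limit12} will be identified with $W^*(x)$ by separate arguments, after which the exchange of limits follows. Throughout, I work on the compact manifold $\bS^1$ with strictly positive Fokker--Planck solutions, so all limits below are pointwise in $x$.

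\textit{LHS.} By ergodicity of \eqref{FPt} on $\bS^1$, the inner limit $\lim_{t\to+\8}\rho_\eps(x,t)$ equals the unique invariant density $\pi_\eps(x)$ given by the closed formula \eqref{pie}; crucially, this limit is independent of the initial data, which enters only through the mass normalization preserved by the flow. Applying the large deviation principle \eqref{LD2} for $\pi_\eps$, we obtain $\lim_{\eps\to 0}\bigl(-\eps\log\pi_\eps(x)\bigr)=W^*(x)$ in the LDP sense of \eqref{LD2}. Hence the LHS equals $W^*(x)$ in that sense.

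\textit{RHS.} Set $\psi_\eps(x,t):=-\eps\log\rho_\eps(x,t)$; then $\psi_\eps$ solves the viscous HJE \eqref{HJEpsi}. The prescribed initial datum gives
\[
\psi_\eps(x,0)=W^*(x)-\eps\log C_\eps.
\]
Since $W^*$ has minimum $0$ and finitely many critical points, Laplace's method gives $C_\eps^{-1}=\int_{\bS^1}e^{-W^*/\eps}\,dx=O(\eps^{1/2})$, so $\eps\log C_\eps\to 0$ and $\psi_\eps(\cdot,0)\to W^*$ uniformly. Standard vanishing-viscosity stability for HJEs with convex coercive Hamiltonian (treating the bounded $O(\eps)$ correction $-\eps U''$ in \eqref{HJEpsi} as a negligible perturbation) then yields $\psi_\eps(x,t)\to\psi(x,t)$, where $\psi$ is the unique viscosity solution of the inviscid dynamic HJE $\pt_t\psi+H(\pt_x\psi,x)=0$ with initial data $W^*$, namely $\psi(\cdot,t)=S_t W^*$. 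By Corollary \ref{cor_kam}, $W^*$ is a weak KAM solution of \eqref{HJE_w}; by Corollary \ref{cor_st}, it is therefore an invariant function of the Lax--Oleinik semigroup, so $S_tW^*(x)=W^*(x)$ for all $t\geq 0$. Taking $t\to+\8$ gives the RHS equal to $W^*(x)$.

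\textit{Main obstacle.} The delicate step is the vanishing-viscosity passage from $\psi_\eps(x,t)$ to $\psi(x,t)$ on each compact time interval, since \eqref{HJEpsi} carries both the genuine diffusive regularization and the drift-correction $-\eps U''(x)$; the latter is an $O(\eps)$ bounded perturbation that disappears in the limit, while uniform convergence of the initial data combined with standard $L^\8$ estimates for the viscous HJE delivers uniform convergence on $[0,T]\times\bS^1$. The role of the special initial datum $C_\eps e^{-W^*/\eps}$ is precisely to align the two limit procedures: it forces the RHS inner limit to coincide with $W^*$ via the fixed-point property of $S_t$, rather than with some a priori unknown weak KAM solution selected by the flow from a different initial profile. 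For generic initial data the inner limit of the RHS is still a weak KAM solution of \eqref{HJE_w}, but it may differ from $W^*$, reflecting the non-uniqueness discussed in Section \ref{sec5.1}; this explains why the exchange of limits is a genuine selection statement.
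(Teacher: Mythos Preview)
Your proof is correct and follows essentially the same route as the paper: ergodicity plus the Freidlin--Wentzell LDP for the LHS, and the WKB transform, vanishing-viscosity limit of \eqref{HJEpsi}, and Lax--Oleinik invariance of $W^*$ (Corollary~\ref{cor_st}) for the RHS. Your Laplace estimate $C_\eps^{-1}=O(\eps^{1/2})$ is in fact a shade more precise than the paper's $C_\eps\sim O(1)$, but both yield $\eps\log C_\eps\to 0$, which is all that is needed.
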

\begin{proof}
On the one hand, for fixed $\eps$, the ergodicity $\lim_{t\to +\8}\rho_\eps(x,t)=\pi_\eps(x)$ is a standard result for over-damped Langevin dynamics on $\bS^1$. Thus from the large deviation principle \eqref{LD2} \cite[Chapter 6, Theorem 4.3]{FW}, we have
\begin{equation} \label{ld_vis}
 \lim_{\eps \to 0} \bbs{-\eps \log \lim_{t\to +\8} \rho_\eps(x,t) }    \text{ ``}=\text{'' }  W^*(x),
 \end{equation}
 where $	\text{ ``}=\text{'' }$ is in the large deviation principle sense \eqref{LD2}.
 
 On the other hand, $\psi_\eps(x,t) = - \eps \log \rho_\eps(x,t)$ is the solution to the HJE \eqref{HJEpsi} with initial data $W^*(x)- \eps \log C_\eps$. Since $\min W^*(x)=0$, so by the Laplace  principle
  $\int e^{-\frac{W^*(x)}{\eps}} \sim O(1)$ as $\eps \to 0$. Thus $C_\eps \sim O(1)$ and as $\eps \to 0$, $W^*(x)- \eps \log C_\eps \to W^*(x)$. From \cite{crandall1983viscosity, crandall1984two} vanishing viscosity method,  we know the convergence from the solution $\psi_\eps(x,t)$ of \eqref{HJEpsi} to the viscosity solution $\psi(x,t)$ of the limiting first order HJE 
  $$\pt_t \psi(x,t) + H(\pt_x \psi (x,t), x)=0, \quad \psi(x,0)=W^*(x).$$
  Then by the Lax-Oleinik semigroup representation 
  \begin{equation}
\psi(x,t)=(S_t W^*)(x)=  W^*(x),
\end{equation}
where we used  $W^*(x)$ is an invariant solution due to Corollary \ref{cor_st}.
Thus we know $$W^*(x) = \lim_{t\to +\8} W^*(x) = \lim_{t\to +\8} \lim_{\eps \to 0} - \eps \log \rho_\eps(x,t).$$
\end{proof} 
We remark the exchanging of two limits on the left and right hand sides is in general incorrect. Indeed, the limits in the left hand side of \eqref{limit12} is unique. This is because the invariant measure for $t\to +\8$ exists and is unique. Then the rate function of the large deviation principle for invariant measures is unique. However, the right hand side first finds the rate function for the large deviation principle as $\eps \to 0$ at finite time, which   solves a dynamic HJE. Then the long time limit $t\to +\8$ for the dynamic solution exists \cite{BSBS} but in general is not unique. Therefore, a selection principle is needed, and particularly the limits on the left hand side provides a meaningful selection principle for stationary HJE via the large deviation principle for invariant measures. Below, we discuss two selection principles: large deviation principle v.s. vanishing discount limit.
 
\subsection{Two selection principles: large deviation principle v.s. vanishing discount limit} \label{sec5.3}
 A selection principle is to give a meaningful principle to determine boundary values on the projected Aubry set $\mathcal{A}$. The global energy landscape $W(x)$ in \eqref{FW_W}, particularly the globally adjusted boundary values  on the local minima \eqref{wi},   is constructed so that $W(x)$ is the rate function for the large deviation principle of the invariant measure for the diffusion process on $\bS^1$ \cite[Chapter 6, Theorem 4.3]{FW}. That is to say, the large deviation rate function $W(x)$ for the diffusion process serves as a selection principle for weak KAM solutions. 
This selection principle could also apply to other Hamiltonian dynamics with an underlying stochastic process and a large deviation principle. We formally describe this framework below for a chemical reaction process with a random-time changed Poison representation, cf.   \cite{Kurtz15, GL22}. For any fixed time $0<t<+\8$, the large deviation principle for the  chemical reaction process in the thermodynamic limit was proved in \cite{GL22t} by using the convergence from the Varadhan's discrete nonlinear semigroup $w_\eps(x,t)$ to the viscosity solution $w(x,t)$ of the dynamic HJE, which has a Lax-Oleinik semigroup representation. If this Lax-Oleinik semigroup has an invariant solution, denoted as $w(x)$. Then this invariant solution is a weak KAM solution and has the representation $w(x)=\inf_{y\in \bR^d}(w(y)+v(x;y))$ via the Ma\~n\'e potential $v(x;y)$; see \cite[Proposition 6.11, Theorem 7.5]{Tran21} \cite[Proposition 4.6.7]{fathi2008weak} for proofs for a periodic domain. Notice these invariant solutions are in general not unique. 
However, since the Lagrangian $L(s,x)$ in the least action problem $v(x;y)$ is always nonnegative and it is proved in \cite{GL22t} that the zero-cost flow (a.k.a. the dynamics following the law of large numbers) is given by $\dot{x}=\pt_p H(0,x)$. Thus the projected Aubry set $\mathcal{A}$, which is assumed to contain only finite many points,    can be characterized by using the roots of $\pt_p H(0,x)=0$. Then the weak KAM representation can be reduced to $w(x) = \inf_{y\in \mathcal{A}}(w(y)+h(x;y))$ \cite[Theorem 7.40]{Tran21}.  Assume furthermore $w(x), x\in \mathcal{A}$ is chosen such that $w(x)$ is the rate function for the invariant measure of the chemical reaction process, then this gives a selection principle to those weak KAM solutions.
 
{\blue We comment on the stationary HJE \eqref{HJEpsi} for $\psi_\eps$ with viscosity terms can be one non-trivial viscosity approximation for the stationary HJE. In general, we know the   nonuniqueness for the vanishing viscosity limit of stationary HJE. How to construct a vanishing viscosity approximation to stationary HJE which has a unique limit for all vanishing $\eps$ is still an open question\cite{Tran21}.
In our example, thanks to the inhomogeneous term $\eps U''(x)$ in 
\begin{equation}
H(\pt_x \psi_\eps(x,t), x) = \eps \bbs{\pt_{xx}\psi_\eps(x,t) - U''(x)},
\end{equation}
one has a non-trivial solution but also has a uniform limit as $\eps \to 0$; see \eqref{ld_vis}. This serves as a meaning vanishing viscosity approximation but in general, we do not have an answer.
 } 
 
 In another direction, a selection principle is given by choosing the boundary values on the projected Aubry set $\mathcal{A}$  so that the weak KAM solution $W(x)$ is the unique viscosity solution which is the vanishing discount limit of the solution $\psi_\lambda$ to $\lambda \psi_\lambda + H(\nabla \psi_\lambda(x),x)=0$ as $\lambda \to 0.$ This direction has been widely studied in both compact or non-compact domains \cite{Contreras2001, gomes2008generalized,  Fathi16, ishii2020vanishing}. {\blue We refer to \cite{cgmt15, mitake2017selection} which include a degenerate diffusion term in the vanishing discount limit problem, and to \cite{ITM2017} for a duality framework in the vanishing discount problem for fully nonlinear, degenerate elliptic Hamiltonian. The vanishing discount limit method is different from the vanishing viscosity limit we constructed.    Particularly, for our one dimensional example on $\bS^1$, the vanishing discount  limit of the  discounted HJE with the same Hamiltonian
 \begin{equation}
 \lambda \psi_\lambda +  \psi_\lambda' (\psi_\lambda'  - U')=0
 \end{equation} 
 is trivial. For $\lambda>0$, there is a unique viscosity solution $\psi_\lambda\equiv 0$ due to the comparison principle. Thus  its vanishing discount limit $\lim_{\lambda \to 0^+} \psi_{\lambda} \equiv 0$ is  the selected weak KAM solution to \eqref{HJE_w} via the vanishing discount  limit.} 
 
 Based on the discussions above, we can see at least, for the diffusion process on $\bS^1$, the 
  vanishing discount limit  and the rate function in large deviation principle are two different selection principles which result to different weak KAM solutions. This is analogous to the idea that in general the two limits $t\to +\8$ and $\eps \to 0$ for \eqref{HJEpsi} are  non-exchangeable.

  \subsection{ Boltzmann analysis  for the weak KAM solution $W^*(x)$ selected via large deviation principle }\label{sec4_prob}
In this section, based on the weak KAM solution $W^*(x)$ defined in \eqref{FW_W} with boundary data $W(x_i)=W_i$ constructed in \eqref{wi},   we elaborate  some probability interpretations that can be explained or computed via the weak KAM solution properties.
 
The classical Boltzmann analysis in statistical mechanics shows that in an equilibrium system, the probability for a particle being at a certain state $x$ is a function of the state's energy $E(x)$ and the temperature $T$
\begin{equation}
\pi(x) \propto e^{-\frac{E(x)}{k_B T}}.
\end{equation} 
Then the ratio of the probability between any two states is
\begin{equation}
\frac{\pi(x_1)}{\pi(x_2)} = e^{\frac{E(x_2)-E(x_1)}{k_B T}}.
\end{equation} 
However, for a non-equilibrium system, for instance the irreversible diffusion example on $\bS^1$ \eqref{FP}, this ratio can not be computed directly from the original  potential energy $U(x)$.

Indeed, the weak KAM solution $W(x)$ provides the answer, which not only serves as the good rate function of the large deviation principle of invariant measure $\pi_\eps(x)$ but also allows one to find a calibrated curve for any $x$ tracking back to a critical point in the projected Aubry set $\mathcal{A}$.
This calibrated curve allows one to compute the ratio of the probabilities between the starting point $\vg(0)=x^*$ and its reference point $\vg(-\8)$.
\begin{equation}
\frac{\pi_\eps(x^*)}{\pi_\eps (x(-\8))} \approx e^{\frac{W(x(-\8)) - W(x^*)}{\eps}}.
\end{equation}
The value of this ratio, depending on the explicit calibrated curve starting from $x^*$, is either $1$ or $e^{\frac{U(x_i) - U(x^*)}{\eps}}$. These ratios of probabilities w.r.t. different reference points due to different calibrated curves are shown in Figure \ref{fig_pi}.
\begin{figure}
\includegraphics[scale=0.43]{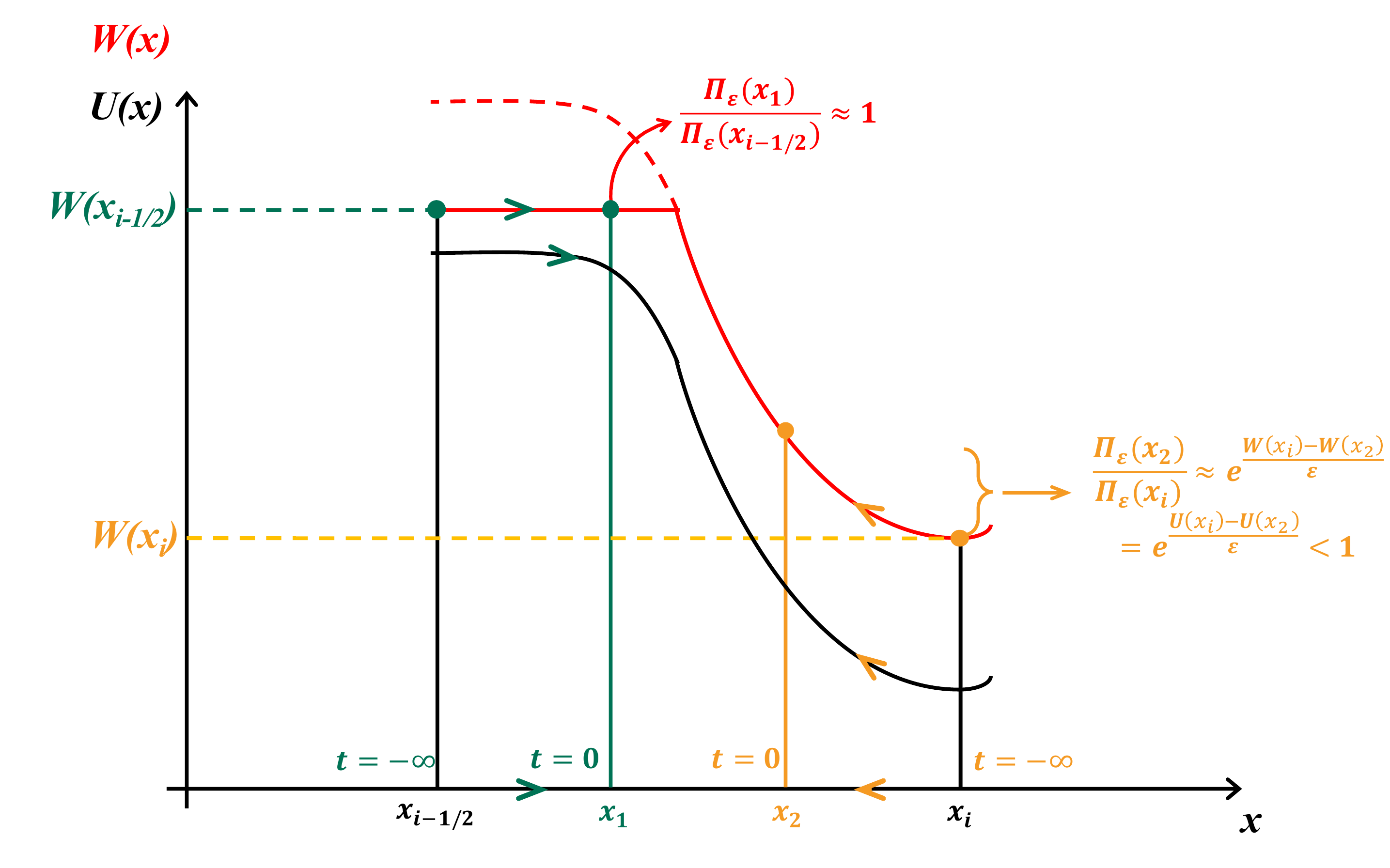}
\caption{An illustration of the weak KAM solution $W(x)$   and two calibrated curves starting from $x_1$ and $x_2$ backward in time. The solid black line is the original potential $U(x)$ while the solid red line is the weak KAM solution $W(x)$, which serves as the global energy landscape in the zero noise limit. The calibrated curve (green arrow) starting from $x_1$ solves `downhill' ODE $\dot{\vg}=-U'(\vg)$ \eqref{ODE1} backward in time and tracks back to $x_{i-\frac12}$ with the same level in the global energy landscape $W(x)$. The ratio of the probabilities at $x_1$ and its reference point $x_{i-\frac12}$ equals one, which indicates these two states appear  with the same probability in the zero noise limit. The    calibrated curve (orange arrow) starting from $x_2$ solves `uphill' ODE $\dot{\vg}=U'(\vg)$ \eqref{ODE2} and tracks back to its reference point $x_{i}$. The probability ratio $\frac{\pi_\eps(x_2)}{\pi_\eps(x_i)}$, in the zero noise limit, is smaller than $1$ and indicates state $x_2$ appear less likely than $x_i$.}\label{fig_pi} 
\end{figure}

 \section*{Acknowledgements}
The authors would like to thank Jin Feng and Hung Tran for valuable  suggestions. Yuan Gao was supported by NSF under Award DMS-2204288. J.-G. Liu was supported   by NSF under award DMS-2106988.

 \appendix
 
 \section{Remarks on Ma\~n\'e potential is not a viscosity solution on $\bS^1$}
 
Regarding the conclusions (iii) and (iv) in the Proposition \ref{prop_h}, we emphasize that the non-differential point cannot be resulted from a $C^1$ function cut off from below by a constant, otherwise it is not a viscosity solution to HJE. 
Indeed, from the proof of (iv), if a $C^1$ function is cut off from below by a constant, then at the non-differential point,   a constant is connected to an increasing function, where $D^+ h(x^*)=\emptyset, \, D^- h(x^*)= \{q; 0\leq q \leq h'(x^*_+)=U'(x^*)\}$. Then it's easy to verify $q( q-U'(x) )\leq 0$ does not satisfies the viscosity supersolution test; see Figure \ref{fig_p_m} for the comparison of the shape of the Peierls barrier $h(y;x_i)$ and a general 
Ma\~n\'e potential $v(y; x^{**}).$
\begin{figure}
\includegraphics[scale=0.27]{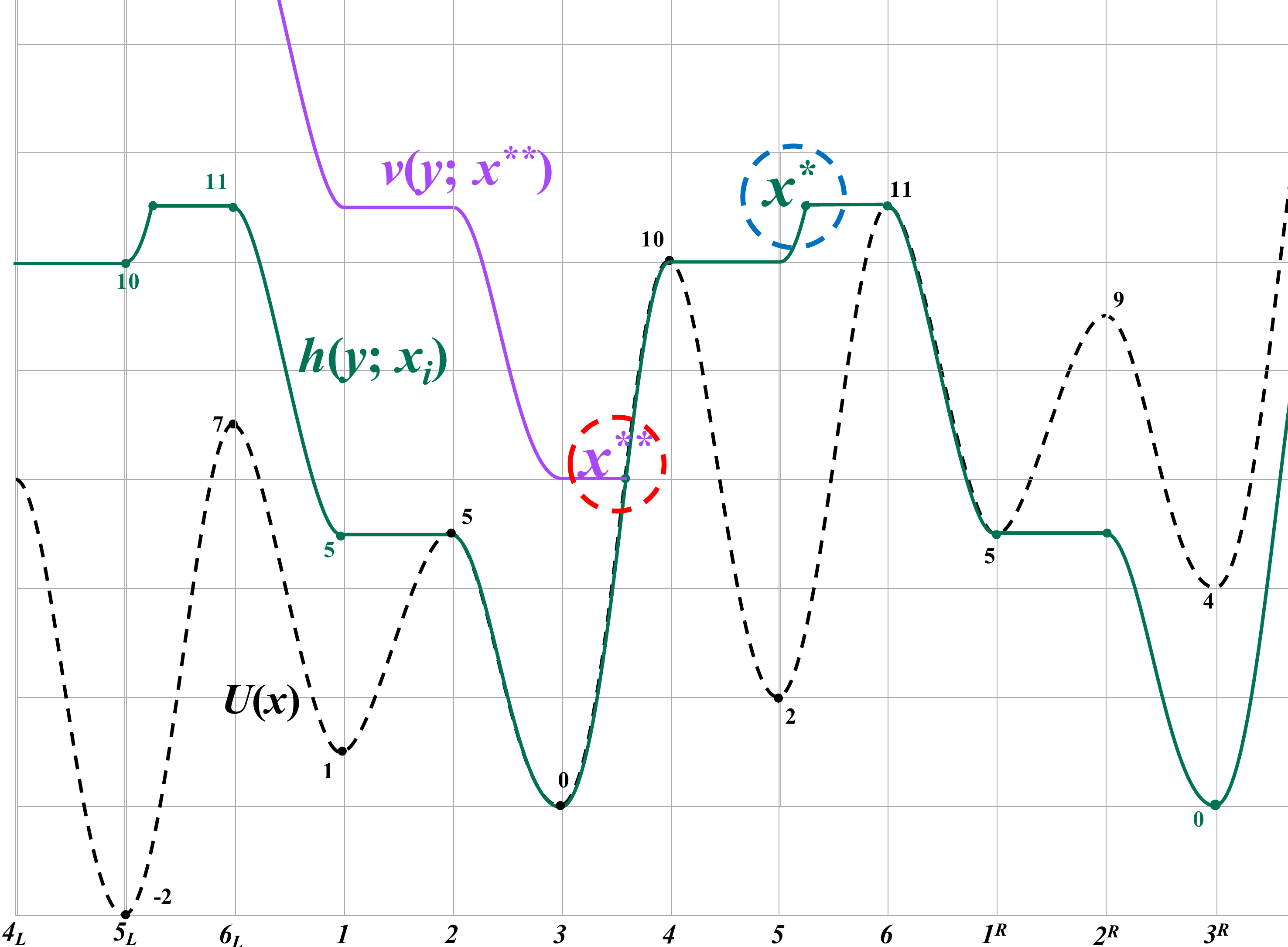}
\caption{The comparison between the   Peierls barrier $h(y;x_2)$ and a general 
Ma\~n\'e potential $v(y; x^{**}).$ The  dashed black line is the skew periodic potential $U(x)$ with three local minimums $1,0,2$ in $\bS^1=[6_L,6]$. The solid green  line is a Peierls barrier $h(y;x_2)$ starting from a local minimum $x_2=3$, which is periodic with one constant-cut from above and only one non-differential point $x^*$. The left slop is larger than the right slop at $x^*$, so $h(x^*;x_2)$ satisfies viscosity solution test.  The solid purple  line starting from $x^{**}$ is the left half of the Ma\~n\'e potential $v(y; x^{**})$, which has an additional constant-cut from below at the nondifferential point $x^{**}$. The left slop is smaller than the right slop at $x^{**}$, so $v(x^{**};x^{**})$ violates the viscosity supersolution test (i.e., violates the entropy condition).  }\label{fig_p_m}
\end{figure}
 
As a byproduct, we also characterize the shape of the Ma\~n\'e potential $v(y; x^{**})$ and explain why we do not use the Ma\~n\'e potential to construct a global energy landscape.
\begin{lem}\label{lem:mane}
Let the Ma\~n\'e potential $v(y;x^{**})$ be defined in \eqref{value} and assume $x^{**}$ is not a critical point of $U(x)$.  
\begin{enumerate}[(i)]
\item The Ma\~n\'e potential $v(y;x^{**})$ is Lipschitz continuous and periodic;
\item The starting point $x^{**}$ must be a non-differential point where either   a constant is connected to an increasing function  or a decreasing function is connected to a constant. That is to say, $v(y;x^{**})$ is a $C^1$ function  cut off  at least once  by a constant zero from below.
\item Another possible non-differential point is same as that for the Peierls barrier;
\item  $v(y; x_i)$ is the maximal Lipschitz continuous viscosity subsolution to HJE
\begin{equation}\label{HJE_v}
 H(v'(y),y) = v'( v'-U' )=0, \quad y\in \bS^1 
\end{equation}
satisfying $v(x^{**}; x^{**})=0$,
but it does not satisfy the viscosity supersolution test at $x^{**}$. In other words, $\rho(y) := v'(y;x^{**})$ is not a stationary  entropy shock at $x^{**}$ to the corresponding   Burgers transport equation
\begin{equation}\label{bb}
 \pt_t \rho + \pt_y(\rho^2) - \pt_y(U' \rho)=0.
\end{equation}
\end{enumerate}
\end{lem}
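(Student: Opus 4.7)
The plan is to mirror the proof of Proposition \ref{prop_h} as closely as possible, introducing one new feature at the starting point $x^{**}$: because $x^{**}$ is not a zero of $U'$, the gradient flow $\dot\gamma=-U'(\gamma)$ does not rest there but flows immediately toward the adjacent local minimum. Concretely, I would split into two symmetric cases according to the sign of $U'(x^{**})$; take $U'(x^{**})<0$, so $x^{**}\in(x_{i-\frac12},x_i)$. Define a right barrier $v_R(y;x^{**})$ by following $\dot\gamma=-U'(\gamma)$ from $x^{**}$ to $x_i$ at zero cost, then proceeding uphill/downhill exactly as in \eqref{hr} starting from $x_i$; define a left barrier $v_L(y;x^{**})$ by first going uphill from $x^{**}$ to $x_{i-\frac12}$ at cost $U(y)-U(x^{**})$ and then continuing as in \eqref{hl} from $x_{i-\frac12}$. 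The identities $\tfrac14|\dot\gamma+U'|^2=\tfrac14|\dot\gamma-U'|^2+\dot\gamma\,U'$ used in \eqref{quasi1} show that each of these is optimal. Then $v(y;x^{**})=\min\{v_L,v_R\}$ on $\bS^1$, after screw-periodic extension, and the same argument as \eqref{hlrmin}--\eqref{pblr} gives a unique meeting point $x^*$ and periodicity, proving (i) and (iii).

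For (ii), the new feature at the base point: on the gradient-flow side one has $v(y;x^{**})\equiv 0$ in a right neighborhood of $x^{**}$, while on the opposite side $v(y;x^{**})=U(y)-U(x^{**})$ is a strictly decreasing $C^1$ function meeting $0$ at $x^{**}$. Thus $v'(x^{**}_-)=U'(x^{**})<0$ and $v'(x^{**}_+)=0$, which is precisely the pattern ``decreasing function connected to a constant zero.'' The case $U'(x^{**})>0$ is symmetric and gives ``constant zero connected to an increasing function.'' In either case $v$ is a $C^1$ function cut from below by the constant $0$ at $x^{**}$, establishing (ii).

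For (iv), the subsolution property and the maximality among Lipschitz subsolutions vanishing at $x^{**}$ follow verbatim from Step 3 of the proof of Proposition \ref{prop_h}: one integrates $v(y)-v(x^{**})\le\int L\,dt$ along any absolutely continuous curve and takes the infimum in time. The main (and only genuinely new) point is showing the supersolution test fails at $x^{**}$. I would compute the sub-differential directly from the explicit one-sided formulas above: $D^-v(x^{**})=[U'(x^{**}),0]$ in the case $U'(x^{**})<0$ (and analogously in the other case). For any interior $q\in(U'(x^{**}),0)$ one has $q<0$ and $q-U'(x^{**})>0$, hence $H(q,x^{**})=q(q-U'(x^{**}))<0$, which violates $H\ge 0$ on $D^-v$. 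The main obstacle—really the only delicate verification—is this explicit sub-differential computation, but it reduces to elementary one-sided Taylor expansions since both pieces of $v$ near $x^{**}$ are $C^1$.

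Finally, to rephrase (iv) as an entropy-shock statement for \eqref{bb}, observe that with $\rho=v'$ the jump at $x^{**}$ has left trace $\rho_-=U'(x^{**})$ and right trace $\rho_+=0$, so $\rho_-<\rho_+$: this is precisely the Lax-entropy-violating (expansion) shock for the convex flux $\rho^2-U'\rho$, equivalent to the supersolution failure established above. This identification is immediate from the shape description in (ii) and requires no further argument. Overall, none of the steps beyond the sub-differential computation present real difficulty, since the skeleton of the argument is already laid down in Proposition \ref{prop_h}.
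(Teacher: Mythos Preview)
Your proposal is correct and follows essentially the same approach as the paper: both compute the local shape of $v(\cdot;x^{**})$ in the well containing $x^{**}$ (you treat the case $x^{**}\in(x_{i-\frac12},x_i)$, the paper treats $x^{**}\in(x_i,x_{i+\frac12})$), reduce the global construction to Proposition \ref{prop_h}, and verify the supersolution failure by computing $D^-v(x^{**})$ from the one-sided derivatives. The only noticeable difference is in the entropy-shock reformulation: you invoke the Lax criterion for the convex flux $\rho^2-U'\rho$ directly, whereas the paper writes down the entropy inequality with a general convex pair $(\eta,\Phi)$, specializes to $\eta(\rho)=\rho^2$, and computes the jump to be $\tfrac13(\rho_+-\rho_-)^3$, arriving at the same conclusion $\rho_->\rho_+$ for admissibility.
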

\begin{proof}
First, 
    we consider the $i$-th well of $U(x)$ containing the starting point $x^{**}$. Assume $x^{**}\in (x_i, x_{i+\frac12})$, then 
\begin{equation}
v(y;x^{**}) := \left\{ \begin{array}{cc}
U(y)-U(x_i), & y\in (x_{i-\frac12}, x_i);\\
\max\{ U(y)-U(x^{**}), 0  \}, & y\in (x_i, x_{i+\frac12}).
\end{array} \right.
\end{equation}
This means at $x^{**}$, a constant $0$ is connected to an increasing function $U(y)-U(x^{**})$. Similarly, if $x^{**}\in (x_{i-\frac12}, x_{i})$, we obtain at   $x^{**}$, a decreasing function $U(y)-U(x^{**})$ is connected to a constant. Thus proves conclusion (ii).

Second, for $y$ outside $i$-th well, the construction is the same as the Peierls barrier. Thus conclusion (i) and (iii) follow.

Third, we only need to verify the viscosity solution test   at the   non-differential point $x^{**}$. If the non-differential point is a constant $0$ connected to an increasing function $U(y)-U(x^{**})$, then $D^+ v(x^{**})=\emptyset, \, D^- v(x^{**})= \{q; 0\leq q \leq v'(x^{**}_+)=U'(x^{**})\}$. Then it's easy to verify $q( q-U'(x) )\leq 0$ satisfies the subsolution condition but does not satisfy the viscosity supersolution condition.
Again, from Step 4 in the proof of Proposition \ref{prop_h}, we know  the Ma\~n\'e potential $v(y;x^{**})$   is the maximal Lipschitz continuous viscosity subsolution to  \eqref{HJE_v}.

Last, we take $\rho(y) = v'(y;x^{**})$, and then the solution to \eqref{HJE_v} is equivalent to the stationary solution $\rho(y)$ to Burgers transport equation \eqref{bb}. The stationary shock solution $\rho(y)$ at the  jump point $x^{**}$, with the left limit $\rho_-$ and right limit $\rho_+$,  satisfies 
$$\rho_+^2- U'(x^{**}) \rho_+ = \rho_-^2 - U'(x^{**})\rho_-.$$ The entropy condition for a shock solution is that for any convex entropy function $\eta(\rho)$,
\begin{equation}\label{entropy}
\pt_t \eta(\rho) + \pt_y \bbs{\Phi(\rho) - U' \eta(\rho)} + \bbs{\eta(\rho)- \rho \eta'(\rho)} U'' \leq 0
\end{equation}
in the distribution sense. Here $\Phi'(\rho)=2 \rho \eta'(\rho)$. For scalar equations, one can just take $\eta(\rho)=\rho^2$ and thus $\Phi(\rho)=\frac{4}{3}\rho^3$.   Then the entropy condition \eqref{entropy} for the stationary shock $\rho(y)$ at $x^{**}$ becomes
\begin{equation}
\Phi(\rho_+) - U'(x^{**}) \eta(\rho_+) - \bbs{\Phi(\rho_-) - U'(x^{**}) \eta(\rho_-)} = \frac13(\rho_+ - \rho_-)^3 \leq 0,
\end{equation}
 which implies $\rho(y)$ only has jump discontinuity at $x^{**}$ and the left limit $\rho_-$ is larger than the right limit $\rho_+$. Back to $v(y;x^{**})$, the entropy condition  is violated at $x^{**}$ since $v'(x^{**}_+; x^{**})> v'(x^{**}_-; x^{**}).$ This entropy condition violation argument is equivalent to the violation of the viscosity supersolution condition for the Ma\~n\'e potential $v(y;x^{**})$.
 \end{proof}
 
 \section{Remark on the weak KAM solutions of positive type}
One can also   define a weak KAM solution of positive type, the only difference in the theory is a time direction. That is to say, the calibrated curve is defined on $[0,+\8)$ and for any $0\leq a<b $ the least action is achieved
\begin{equation}
u_+(\vg(b) )- u_+(\vg(a))= \int_a^b L(\dot{\vg}, \vg) \ud t.
\end{equation}
Moreover, the weak KAM solution of positive type $u_+$ can be equivalently characterized as a invariant solution to  the Lax-Oleinik semigroup $S^+_t$ associated with the dynamic HJE \cite{evans2008weak} 
\begin{equation}\label{HJE+}
\pt_t u - H(\pt_x u(x), x)=0, \quad x\in \bS^1, \quad u(x,0)=u_0.
\end{equation}
The viscosity solution to \eqref{HJE+} is represented  as the backward semigroup $S^+_t$, i.e., for $t\geq 0$,
\begin{equation}
(S^+_t u_0)(y):=   \sup_x\bbs{u_0(x) - \inf_{\vg; \gamma(0)=y, \, \vg(t)=x} \int_0^t L(\dot{\vg}, \vg) \ud \tau}, \quad y\in \bS^1.
\end{equation}
Then $u_+$ is the invariant solution of $S^+_y$ satisfying 
\begin{equation}
u_+(y)=(S^+_t u_+)(y)  
\end{equation}
and it is a viscosity solution to the stationary HJE \cite[Theorem 3.1]{evans2008weak}
\begin{equation}
- H(\pt_x u_+(x), x)=0, \quad x\in \bS^1.
\end{equation}

It worth noting that the weak KAM solution of positive type $u_+(x)$ is \textit{not} same as the negative ones in general. But the weak KAM solution of positive type $u_+(x)$ can be constructed via the negative type ones  with a time reversed Hamiltonian.  Precisely, define the time reversed Hamiltonian as $\hat{H}(p,x)= H(-p, x)$ and the corresponding time reversed Lagrangian is $\hat{L}(s,x)=L(-s,x)$. Then it is easy to see, the weak KAM solution of negative type, denoted as $\hat{u}_-(x)$, for the HJE $$\hat{H}(\pt_x \hat{u}_-(x), (x) )=0$$ satisfies the relation
\begin{equation}\label{neg}
-\hat{u}_-(x) = u_+(x),
\end{equation}
where $u_+(x)$ is a weak KAM solution of positive type for the HJE 
$$-{H}(\pt_x {u}_+(x), (x) )=0.$$ Apparently, at non-differential points, the viscosity solution test is different for the aboves two stationary HJEs.
For instance, in our $\bS^1$ Langevin dynamics example,  $\hat{H}(p,x)=p(p+U')$, and thus by Proposition \ref{prop_local},  $\hat{u}_-(x)$ can be expressed as the local trimming from above of $-U$, i.e.,
\begin{equation}
\hat{u}_-(x) = \min \{ -U, \text{const} \} \,\, \text{ for } x\in (x_{i-\frac12}, x_i) \text{ or } x\in (x_{i}, x_{i+\frac12}).
\end{equation}
In terms of $u_+$, this is a local trimming from below of $U$, i.e.,
\begin{equation}
u_+(x) = -\hat{u}_-(x) = -\min \{ -U, \text{const} \} = \max \{U, - \text{const}\}.
\end{equation}
However, when the potential $U$ is periodic, i.e., the Langevin dynamics is reversible, no cut-off from above/below is performed, and thus  the positive type weak KAM solution $u_+(x)$ given by \eqref{neg} is same as the negative type $u_-(x)$ constructed via the large deviation principle(see Corollary \ref{cor_kam}). Indeed, $\hat{H}(p,x)=p(p+U')$ and $\hat{u}_-(x) = -U(x)$ is a weak KAM solution of the negative type associated with $\hat{H}(\pt_x \hat{u}_-(x), (x) )=0$, which is actually solved in the classical sense. Thus $u_+(x) = U(x)=u_-(x).$  This argument is no longer true for the irreversible process, i.e., $U(x)$ is not periodic.

 \section{Freidlin-Wentzell's variational formula}\label{app2}
 In this section, we give a coarse grained Markov chain interpretation for Freidlin-Wentzell's variational formula \eqref{FW_W}.

 To study the multi-well exit problem, the essential idea follows Kolmogorov's construction of Markov chain induced by the continuous process $X_t$ in \eqref{LV}. Denote the collection of all the local minimums as $\Gamma:=\{x_i,i=1,\cdots, k\}$. Denote the stopping time $\tau_i:=\inf \{t> \tau_{i-1}; X_t\in \Gamma\backslash \tilde{X}_{\tau_{i-1}}\}$ and   $\tilde{X}_t := X_{\tau_{i-1}}\in \Gamma$ for $t\in [\tau_{i-1}, \tau_i)$ is defined by the sequence of $\tau_i, \, i=0,1,\cdots$. This  is the induced continuous time Markov chain on $\Gamma$. The transition probability for $\tilde{X}$ can be approximated by the large deviation principle for exit problems 
 \begin{equation}
 P_{i, i+1} := \bP\{\tilde{X}=i+1| \tilde{X} = i\} \approx c e^{-\frac{U_{i+\frac12}-U_i}{\eps}}. 
 \end{equation}
 Similarly, define $P_{i, i-1} \propto e^{-\frac{U_{i-\frac12}-U_i}{\eps}}$. This defines an approximated $Q$-process with transition probability matrix $(P_{ij})$. Then the invariant distribution $\nu_i^\eps$, $i=1,\cdots, k$ satisfies
 \begin{equation}
 \nu_i^\eps P_{i,i-1} + \nu_i ^\eps P_{i, i+1} = \nu_{i-1}^\eps P_{i-1,i} + \nu_{i+1}^\eps P_{i+1, i}.
 \end{equation}
One can directly verify the closed formula for $\nu^\eps_i$ is given by
\begin{equation}\label{eq_a3}
\nu^\eps_i = \sum_{j=1}^k e^{-\frac{ \tilde{h}_R(x_i; x_{j+1}) \, + \,\tilde{h}_L(x_i; x_{j}) }{\eps}}.
\end{equation} 
Indeed, this formula is  the principal left eigenvector $\nu^T Q=0$  of a cyclic stochastic matrix  
$$Q=\left( \begin{array}{ccccc}
-\!a_1\!\!-\!b_1 & a_1 &  &   & b_1 \\ 
b_2 & -\!a_2\!\!-\!b_2 & a_2 &   &  \\ 
 & \ddots & \ddots & \ddots  &   \\
 &  & \ddots  &  \ddots  & a_{k\!-1} \\ 
a_k &  &   & b_k & \,-\!a_k\!\!-\!b_k
\end{array}  \right), \qquad \nu_i = \sum_{j=1}^k \,\, \prod_{\ell = i-1}^{i+j-1} b_\ell \prod_{m=i+1}^{i+j-1} a_m,$$
where $a_i = P_{i,i+1}$ and $b_i = P_{i,i-1}$ with $k$-periodic index.

 Then as $\eps \to 0$, by the Laplace principle for \eqref{eq_a3}, 
we have
\begin{equation}\label{wi_app}
-\eps \log \nu_i^\eps \to W_i =  \min_{j=1,\cdots,k} (\tilde{h}_R(x_i; x_{j+1}) \, + \,\tilde{h}_L(x_i; x_{j})), \quad i=1, \cdots, k.
\end{equation}
This is the variational formula for boundary data $W_i$ in \eqref{wi}.

Based on the invariant measure $\nu^\eps_i$, $i=1,\cdots,k$ for the induced Markov chain, one can recover the original invariant measure $\pi_\eps$ by the celebrated ergodic result by \textsc{Khasminskii} \cite{khas1960ergodic}. This, together with boundary data $W_i$, $i=1, \cdots, k$, can recovers Freidlin-Wentzell's variational formula \eqref{FW_W}.

\bibliographystyle{alpha}
\bibliography{KAM_bib}

\end{document}